\pgfplotsset{compat=1.18}
\theoremstyle{plain}
\newtheorem{theorem}{Theorem}[section]
\newtheorem{proposition}[theorem]{Proposition}
\newtheorem{corollary}[theorem]{Corollary}
\theoremstyle{definition} 
\newtheorem{definition}[theorem]{Definition}
\newtheorem{remark}[theorem]{Remark}
\newtheorem{example}[theorem]{Example}
\newcommand{\Rom}[1]{\uppercase\expandafter{\romannumeral#1}}
\newcommand{\E}{\mathcal E}
\renewcommand{\L}{\mathcal L}
\newcommand{\cO}{\mathcal O}
\newcommand{\X}{\mathcal X}
\newcommand{\Y}{\mathcal Y}
\newcommand{\Z}{\mathcal Z}
\newcommand{\CC}{\mathbb C}
\newcommand{\FF}{\mathbb F}
\newcommand{\PP}{\mathbb P}
\newcommand{\QQ}{\mathbb Q}
\newcommand{\RR}{\mathbb R}
\newcommand{\ZZ}{\mathbb Z}
\DeclareMathOperator{\Bat}{Bat}
\DeclareMathOperator{\Hom}{Hom}
\newcommand*{\sHom}{\mathscr{H}\kern -.5pt om}
\DeclareMathOperator{\Cone}{Cone}
\DeclareMathOperator{\Pic}{Pic}
\DeclareMathOperator{\PC}{PC}
\DeclareMathOperator{\rays}{G} 
\DeclareMathOperator{\Conv}{Conv} 
\DeclareMathOperator{\NE}{NE} 
\newcommand{\ie}{\emph{i.e.,} }
\newcommand{\eg}{\emph{e.g.,} }
\newcommand{\resp}{\emph{resp.} }
\DeclarePairedDelimiter\abs{\lvert}{\rvert}
\let\oldabs\abs
\def\abs{\@ifstar{\oldabs}{\oldabs*}}
\newcommand{\thistheoremname}{}
\newtheorem*{genericthm}{\thistheoremname}
\author{Zhengning Hu}
\address{Department of Mathematics, University of Arizona, Tucson, AZ 85721}
\email{zhengninghu@arizona.edu}
\author{Rohan Joshi}
\address{UCLA Department of Mathematics, Los Angeles, CA 90095-1555}
\email{rohansjoshi@math.ucla.edu}
\subjclass[2020]{14M25, 14-04}
\title{Classifying weak Fano toric varieties of Picard rank $3$}
\begin{document}

\begin{abstract}
	We provide a systematic method to classify all smooth weak Fano toric varieties of Picard rank $3$ in any dimension using Macaulay2, and describe the classification explicitly in dimensions $3$ and $4$. There are $28$ and $114$ isomorphism classes of rank $3$ weak Fano toric threefolds and fourfolds, respectively.
\end{abstract}
\maketitle

\section{Introduction}

Toric varieties are algebraic varieties that contain an algebraic torus as an open dense subset such that the action of the torus on itself extends to the whole variety. 
Normal toric varieties can be described by combinatorial data, specifically by a fan sitting in a real vector space generated by a lattice. 
Normal toric varieties admit a computational theory, where geometric properties, topological invariants, sheaf cohomology, etc. can be computed from this combinatorial data. 
Many of these methods are implemented in the \texttt{NormalToricVarieties} package \cite{NormalToricVarietiesSource} in computer algebra system Macaulay2 \cite{M2}. 

A Fano (\resp weak Fano) variety is a smooth projective algebraic variety $X$ whose anticanonical divisor $-K_X$ is ample (\resp nef and big). 
There are finitely many isomorphism classes of weak Fano toric varieties in any dimension. 
In this paper we consider the problem of classifying weak Fano toric varieties of Picard rank $3$ up to isomorphism. 
By work of Batyrev \cite{Bat:91}, any smooth projective toric variety of Picard rank $3$ over an algebraically closed field either has three primitive collections, in which case it is a toric projective bundle, or has five primitive collections, in which case it can be generated by a combinatorial recipe described by Batyrev. 
At the 2024 Macaulay2 workshop in Salt Lake City, the authors and other members of the Toric Varieties working group wrote functions for the \texttt{NormalToricVarieties} package that construct toric projective bundles as well as implement Batyrev's recipe. 
We build on this work and prove bounds on the inputs to these functions such that all weak Fano outputs can be generated by bounded inputs.
We write Macaulay2 functions for exhaustively constructing all weak Fano toric varieties of Picard rank $3$ in any dimension and computing their numerical invariants. Using this, we describe explicitly all isomorphism classes of weak Fano toric varieties of Picard rank $3$ in dimensions $3$ and $4$: see Appendices~\ref{app:3Picard3Fold} and~\ref{app:3Picard4Fold}.

In addition we explore the flops and (regular) blowdowns of these varieties. We write Macaulay2 functions which implement possible equivariant blowups and blowdowns based on work of Sato \cite[Section 4]{Sat:00}. 
Using these functions, we explicitly describe possible flops and blowdowns of toric varieties in our lists: see the column ``Blowdown and flop'' in Appendices~\ref{app:2Picard3Fold}, \ref{app:2Picard4Fold}, \ref{app:3Picard3Fold} and \ref{app:3Picard4Fold}. 
We also prove a converse to Bott-Steenbrink-Danilov vanishing \cite[Theorem 9.3.1]{CLS:11}, characterizing Fano varieties among weak Fano toric varieties by the vanishing of a cohomology group.

In this paper we work over an algebraically closed field $k$ of arbitrary characteristic.

\subsection*{Prior work} 
The classification of Fano threefolds was completed by \cite{Isk:77, Isk:78, MoMu:86}: a description of all families of Fano threefolds is available on the website Fanography \cite{fanography} (throughout this paper, Fano and weak Fano varieties are understood to be smooth and projective).
Fano toric varieties are classified up to dimension six, by Batyrev \cite{Bat:99}, Sato \cite{Sat:00}, and {\O}bro \cite{Obr:07}. 
This classification is available online in the Graded Rings Database and is also incorporated into \href{https://macaulay2.com/doc/Macaulay2/share/doc/Macaulay2/NormalToricVarieties/html/_smooth__Fano__Toric__Variety_lp__Z__Z_cm__Z__Z_rp.html}{Macaulay2}. 
{\O}bro claims in his thesis to have completed the classification up to dimension eight \cite{ObroThesis}. 
Weak del Pezzo toric surfaces are classified in \cite{Sat:02}. 
Moreover, \cite{Sat:02} also classifies \textit{weakened} toric Fano threefolds: a weakened Fano variety is a weak Fano variety that is not Fano but can be deformed to a Fano variety under a small deformation. 
Finally, weak Fano (not necessarily toric) threefolds of Picard rank $2$ are classified except for a few remaining open cases in \cite{CuMa:13} and \cite{CuMa:24}.

\subsection*{Acknowledgments} 
We would like to thank Gregory Smith, Thiago Holleben, and Burt Totaro for many helpful conversations. We would also like to thank the organizers of the 2024 Macaulay2 workshop in Salt Lake City. The second author is supported by NSF Graduate Research Fellowship Grant No. DGE-2034835.

\section{Primitive collections and primitive relations}

Let $M \cong \ZZ^d$ be the character lattice of a $d$-dimensional torus $T$ with dual lattice $N = \Hom_\ZZ(M,\ZZ)$.
We associate to a rational polyhedral fan $\Sigma \subseteq N_\RR = N \otimes_\ZZ \RR$ the toric variety $X = X(\Sigma)$.
For every integer $1 \leq l \leq d$, we denote by $\Sigma(l)$ the set of cones in $\Sigma$ of dimension $l$, and $\rays(\Sigma)$ or $\rays(X)$ the set of primitive generators of rays of $\Sigma$.

In this paper we shall consider smooth and proper (or projective) toric varieties $X = X(\Sigma)$ over $k$ where $\Sigma$ is a regular and complete (or projective) fan in the vector space $N_\RR$.
We first review in this section some useful definitions and results we are going to use regarding primitive collections and primitive relations.
More details can be found in \cite[Chapter 6]{CLS:11}.

\begin{definition}\label{defn:primitive}
	Let $X = X(\Sigma)$ be a toric variety with $\Sigma \subseteq N_\RR$ its associated regular and complete fan.
    We call $P = \{x_1, \cdots, x_n \}  \subseteq \rays(\Sigma)$ a \textit{primitive collection} of $\Sigma$ if $P$ does not generate a cone of $\Sigma$ but every proper subset of $P$ does. 
    We denote the set of primitive collections of $X$ by $\PC(X)$. 
    Since $\Sigma$ is complete, the vector $x_1 + \cdots + x_n$ is in the relative interior of a unique cone $\sigma(P)$. 
    Let $y_1, \cdots, y_m$ be the primitive generators of $\sigma(P)$. 
    Then $x_1 + \cdots + x_n$ can be uniquely written as a positive $\ZZ$-linear combination of $y_1, \cdots, y_m$, that is 
    \[x_1 + \cdots + x_n = a_1 y_1 + \cdots + a_m y_m.\]  
This linear relation among elements of $\rays(\Sigma)$ is called the \textit{primitive relation} associated to the primitive collection $P$. 
We can consider $x_1 + \cdots + x_n - (a_1 y_1 + \cdots + a_m y_m)$ as an element $r(P) \in N^1(X)^\vee = N_1(X)$, the group of $1$-cycles on $X$ up to numerical equivalence. 
We define the \textit{degree} of $P$ by
\[\deg P \colonequals (-K_X) \cdot r(P) = n - (a_1 + \cdots + a_m).\] 
\end{definition}

Since a projective toric variety $X$ is log Fano (\ie there exists an effective $\QQ$-divisor $D$ on $X$ such that the pair $(X,D)$ is klt and $-(K_X+D)$ is ample), the Cone Theorem implies the Mori cone (the closure of cone of curves in $N_1(X)$) of $X$, denoted by $\overline{\NE}(X)$, is rational polyhedral. 
This can also be shown by more elementary means.
As stated in \cite[Proposition 1.6]{Rei:83}, if $X$ is a proper toric variety, every irreducible curve on $X$ contained in a smallest stratum can be deformed in the stratum to a union of curves, each of which is contained in a lower dimensional substratum, by a $1$-parameter subgroup of the big torus with points of indeterminacy resolved.
Thus every effective $1$-cycle on $X$ is rationally (and numerically) equivalent to a positive linear combination of some torus-invariant strata.
Since the cone of curves of $X$ is spanned by torus-invariant curves which is a finite set, its Mori cone $\overline{\NE}(X)$ is rational polyhedral.

The following proposition gives an explicit description of the Mori cone in terms of primitive relations.

\begin{theorem}\cite[Theorem 6.4.11]{CLS:11}\label{thm:toricconethm}
Let $X = X(\Sigma)$ where $\Sigma$ is a projective regular fan. Then the Mori cone $\overline{\NE}(X)$ is generated by the elements $r(P) \in N_1(X)$ for all primitive collections $P$ of $\Sigma$.
\end{theorem}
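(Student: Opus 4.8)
The plan is to identify $N_1(X)$ with a lattice of linear relations among the ray generators, and then to compare the two natural sets of generators of the Mori cone: the primitive relations $r(P)$ on one side, and the torus-invariant curve classes already shown to span $\overline{\NE}(X)$ on the other. For a smooth complete toric variety, dualizing the divisor sequence $0 \to M \to \ZZ^{\Sigma(1)} \to \Pic(X) \to 0$ gives
\[ 0 \to N_1(X) \to \ZZ^{\Sigma(1)} \xto{\; e_\rho \,\mapsto\, u_\rho \;} N \to 0, \]
so I would view a class $\gamma \in N_1(X)$ as an integral relation $\sum_\rho a_\rho u_\rho = 0$, with intersection number $\gamma \cdot \bigl(\sum_\rho c_\rho D_\rho\bigr) = \sum_\rho a_\rho c_\rho$. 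Under this identification the primitive relation of Definition~\ref{defn:primitive} is exactly the class $r(P) = (a_\rho)$ with $a_{x_i} = 1$ and $a_{y_j} = -a_j$, while each wall $\tau \in \Sigma(d-1)$, sitting between the two maximal cones $\sigma,\sigma' \supseteq \tau$ with respective extra ray generators $u_\rho, u_{\rho'}$, carries a wall relation $u_\rho + u_{\rho'} = \sum_i b_i u_i$ (the sum over the generators $u_i$ of $\tau$) whose class is that of the torus-invariant curve $V(\tau)$. Since the classes $[V(\tau)]$ generate $\overline{\NE}(X)$ by the discussion preceding the theorem, it suffices to prove that the cones $\Cone\{r(P) : P \in \PC(X)\}$ and $\Cone\{[V(\tau)] : \tau \in \Sigma(d-1)\}$ coincide.

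For the inclusion $\Cone\{r(P)\} \subseteq \overline{\NE}(X)$ I would check that each $r(P)$ is nonnegative against every nef divisor, hence lies in $\overline{\NE}(X)$ by the duality of the nef and Mori cones. Writing a nef divisor $D$ through its support function $\varphi_D$ (so $c_\rho = -\varphi_D(u_\rho)$) and using that $\varphi_D$ is linear on the cone $\sigma(P)$ containing $v \colonequals x_1 + \cdots + x_n = \sum_j a_j y_j$, the pairing above collapses to
\[ D \cdot r(P) = \varphi_D(x_1 + \cdots + x_n) - \bigl(\varphi_D(x_1) + \cdots + \varphi_D(x_n)\bigr). \]
The support function of a nef divisor is convex, which is precisely the inequality $\varphi_D(x_1 + \cdots + x_n) \ge \varphi_D(x_1) + \cdots + \varphi_D(x_n)$, so $D \cdot r(P) \ge 0$.

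For the reverse inclusion I must express each wall class $[V(\tau)]$ as a nonnegative combination of primitive relations; equivalently, by nef--Mori duality, I would show that any divisor $D$ with $D \cdot r(P) \ge 0$ for all $P$ is automatically nef. Because nef-ness of a toric divisor is the convexity of $\varphi_D$ across each individual wall, i.e. the condition $D \cdot V(\tau) \ge 0$, this again reduces to dominating each wall relation by the $r(P)$. The key observation is that the two rays $u_\rho, u_{\rho'}$ on opposite sides of $\tau$ cannot lie in a common cone of the complete fan $\Sigma$, whereas each singleton spans a ray; hence $\{u_\rho, u_{\rho'}\}$ is itself a primitive collection. When all wall coefficients $b_i$ are nonnegative, $v = u_\rho + u_{\rho'}$ lies in $\tau$, the minimal cone $\sigma(\{u_\rho, u_{\rho'}\})$ is the face of $\tau$ spanned by the $u_i$ with $b_i > 0$, and the primitive relation coincides with the wall relation on the nose, giving $[V(\tau)] = r(\{u_\rho, u_{\rho'}\})$.

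The hard part will be a wall with some $b_i < 0$: then $v$ leaves $\tau$, the minimal cone $\sigma(\{u_\rho, u_{\rho'}\})$ has generators $y_j$ distinct from the $u_i$, and the primitive relation differs from the wall relation by the correction $\sum_j c_j y_j - \sum_i b_i u_i$. The plan is to remove this correction by induction: I would equip the cones in which $v$ can sit with a partial order measuring their distance from $\tau$, rewrite the correction as a nonnegative combination of wall relations attached to cones strictly lower in the order, and apply the inductive hypothesis to each. Verifying that such a rewriting always exists and never introduces negative coefficients is the main obstacle, and it is the combinatorial heart of Batyrev's analysis; once it is in place both inclusions follow and the two cones agree, which in particular re-proves the effectivity of every $r(P)$.
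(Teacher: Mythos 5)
Your first half is correct and complete: the identification of $N_1(X)$ with the lattice of integral relations among the ray generators, the formula $D \cdot r(P) = \varphi_D(x_1+\cdots+x_n) - \sum_i \varphi_D(x_i)$ for nef $D$, and the conclusion via nef--Mori duality that every $r(P)$ lies in $\overline{\NE}(X)$. Note, though, that the paper offers no proof of this statement at all --- it is quoted from \cite[Theorem 6.4.11]{CLS:11}, where it is deduced by cone duality from Batyrev's nef criterion (\cite[Theorem 6.4.9]{CLS:11}: $D$ is nef if and only if $D \cdot r(P) \geq 0$ for every primitive collection $P$). Your reformulation of the reverse inclusion as exactly that criterion is the right move, but you then have to prove it, and this is where the proposal breaks down.

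The concrete gap is your ``key observation'' that the two rays $u_\rho, u_{\rho'}$ on opposite sides of a wall $\tau$ never span a cone of $\Sigma$ and hence form a primitive collection: this is false. Take $X = \PP^2$ with ray generators $e_1, e_2, -e_1-e_2$ and the wall $\tau = \Cone(e_1)$ separating $\Cone(e_1,e_2)$ from $\Cone(e_1,-e_1-e_2)$; the opposite rays are $e_2$ and $-e_1-e_2$, and $\Cone(e_2,-e_1-e_2)$ \emph{is} a maximal cone of the fan, so $\{e_2,-e_1-e_2\}$ is not a primitive collection. Here $[V(\tau)] = r(P)$ only for the three-element collection $P = \{e_1,e_2,-e_1-e_2\}$, which contains the wall ray itself. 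Your claim does hold when all wall coefficients $b_i$ are nonnegative (if $\Cone(u_\rho,u_{\rho'})$ were a cone of $\Sigma$, its relative interior would contain $u_\rho+u_{\rho'} \in \tau$, forcing $\Cone(u_\rho,u_{\rho'}) \subseteq \tau$, a contradiction), so that case of your argument is fine once the justification is repaired; but precisely in the hard case (some $b_i < 0$) the primitive relation $r(\{u_\rho,u_{\rho'}\})$ that your ``correction term'' is measured against may not exist, so the inductive framework has nothing to induct on --- the relevant primitive collection can be an entirely different, larger subset of rays. Compounding this, you explicitly defer the verification that the correction can always be rewritten as a nonnegative combination attached to ``lower'' cones; that verification is the entire content of the theorem beyond the easy duality direction (it is what the page-long combinatorial argument of \cite{Bat:91}, reproduced in \cite{CLS:11}, actually establishes). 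As it stands, the proposal proves only the inclusion of the cone generated by the $r(P)$ into $\overline{\NE}(X)$, not the equality.
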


It is worth noting that not every primitive relation is contained in an extremal ray of the Mori cone, for example, in Section~\ref{sec:batyrevconstruction}, the Batyrev varieties have five primitive collections, but only three of them are contained in extremal rays.
We illustrate this fact by an example below.

\begin{example}\label{ex:fanobatyrevsurface}
Consider $X = \text{Bl}_1(\PP^1 \times \PP^1)$, \ie blowup of $\PP^1 \times \PP^1$ at a torus-invariant point.
Such $X$ can be constructed as the toric variety associated to the fan in $\RR^2$ with rays $x_1 = e_1$, $x_2 = e_2$, $x_3 = -e_1$, $x_4 = -e_1 - e_2$ and $x_5 = -e_2$.

\begin{center}
\begin{tikzpicture}[scale=1, >=Stealth]

  \draw[->, thick] (-1,0) -- (1,0) node[right]{};
  \draw[->, thick] (0,-1) -- (0,1) node[above]{};

  \draw[->, thick] (0,0) -- (1,0) node[midway, right=10pt] {\small $x_1$};
  \draw[->, thick] (0,0) -- (0,1) node[midway, above=10pt] {\small $x_2$};
  \draw[->, thick] (0,0) -- (-1,0) node[midway, left=10pt] {\small $x_3$};
  \draw[->, thick] (0,0) -- (-1,-1) node[midway, below left=10pt] {\small $x_4$};
  \draw[->, thick] (0,0) -- (0,-1) node[midway, below=10pt] {\small $x_5$};

\end{tikzpicture}
\end{center}

Then every pair of nonadjacent rays corresponds to a primitive collection. The primitive relations are $x_1+x_3 = 0$, $x_1+x_4=x_5$, $x_2+x_4=x_3$, $x_2+x_5=0$ and $x_3+x_5=x_4$. $X$ is a Batyrev variety (see Section \ref{sec:batyrevconstruction}): it is a smooth projective toric variety of Picard rank $3$ with five primitive collections. 
In fact it is easy to check it is Fano using the criterion (Corollary~\ref{cor:wFanocriterion}) below.

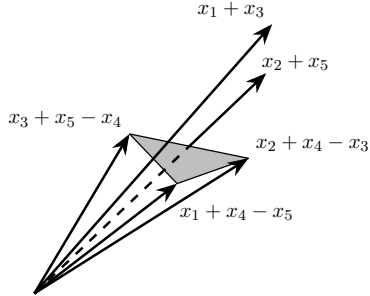
\begin{figure}[h!]
\begin{tikzpicture}[scale=1.2, >=Stealth]
  \begin{axis}[
    view={135}{25},         
    axis lines=none,
    ticks=none,
    enlargelimits=0.15
  ]

    \coordinate (A) at (2,1,0); 
    \coordinate (B) at (1,2,1);
    \coordinate (C) at (2,2,1.5);
    \coordinate (O) at (0,0,0);  
    \coordinate (S) at (0,1,1.5);
    \coordinate (T) at (1.25, 2.5, 1.87);

    \addplot3[patch,patch type=triangle,opacity=0,faceted color=black] coordinates {
      (2,1,0)
      (1,2,1)
      (2,2,1.5)
    };

    \addplot3[patch,patch type=triangle,opacity=0,faceted color=black] coordinates {
      (0,0,0)
      (2,1,0)
      (1,2,1)
    };

    \addplot3[patch,patch type=triangle,opacity=1,color=black!25,faceted color=black] coordinates {
      (0,0,0)
      (1,2,1)
      (2,2,1.5)
    };

    \addplot3[patch,patch type=triangle,opacity=0,faceted color=black] coordinates {
      (0,0,0)
      (2,2,1.5)
      (2,1,0)
    };

    \addplot3[->, thick] coordinates {(2,1,0) (0,0,0)};
    \addplot3[->, thick] coordinates {(2,1,0) (1,2,1)};
    \addplot3[->, thick] coordinates {(2,1,0) (2,2,1.5)};
    \addplot3[->, thick] coordinates {(2,1,0) (0,1,1.5)};

    \draw[name path = line2,opacity=0] (2,1,0) -- (1.25, 2.5, 1.87);
    \draw[name path = line1,opacity=0] (2,2,1.5) -- (1, 2, 1);
    \path[name intersections={of=line1 and line2,by = P}];
    
    \addplot3[->, thick,dashed,opacity=0] coordinates {(2,1,0) (1.25, 2.5, 1.87)};
    \draw[dashed, thick] (2,1,0) -- (P);
    \draw[->, thick] (P) -- (1.25, 2.5, 1.87);
    

    \node[scale=0.6] at (axis cs:0.2,0.2,0) [anchor=north west] {$x_1+x_4-x_5$};
    \node[scale=0.6] at (axis cs:1,2,1) [anchor=south west] {$x_2+x_4-x_3$};
    \node[scale=0.6] at (axis cs:2,2,1.5) [anchor=south east] {$x_3+x_5-x_4$};
    \node[scale=0.6] at (axis cs:0,1,1.5) [anchor=south east] {$x_1+x_3$};
    \node[scale=0.6] at (axis cs:1.35,2.5,1.87) [anchor=south west] {$x_2+x_5$};
  \end{axis}
\end{tikzpicture}
\caption{The Mori cone $\overline{\NE}(X)$}
\label{fig:moricone}
\end{figure}

The cone of curves (see Figure~\ref{fig:moricone}) is a cone over a triangle, with the three extremal rays generated by the primitive relations $x_1+x_4-x_5$, $x_2+x_4-x_3$ and $x_3+x_5-x_4$. 
The other two primitive relations span rays that lie on facets of the cone. 

\end{example}

By Theorem~\ref{thm:toricconethm}, we have the following criterion for a smooth projective toric variety to be Fano or weak Fano.

\begin{corollary}\label{cor:wFanocriterion}
	A smooth projective toric variety $X(\Sigma)$ is Fano (\resp weak Fano) if and only if for all primitive collections $P \in \PC(X)$, $\deg P > 0$ (\resp $\geq 0$). 
\end{corollary}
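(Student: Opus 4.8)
The plan is to combine the toric cone theorem (Theorem~\ref{thm:toricconethm}) with Kleiman's criterion, thereby reducing the (non)negativity of an intersection number against the entire Mori cone to its (non)negativity against the finite generating set $\{r(P) : P \in \PC(X)\}$. The ample and nef halves of the statement then fall out formally, and only the bigness condition in the weak Fano case will require a genuinely toric argument.

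First I would recall Kleiman's criterion: a Cartier divisor $D$ on a projective variety is ample iff $D \cdot \gamma > 0$ for every nonzero $\gamma \in \overline{\NE}(X)$, and is nef iff $D \cdot \gamma \geq 0$ for every $\gamma \in \overline{\NE}(X)$. By Theorem~\ref{thm:toricconethm}, $\overline{\NE}(X)$ is the cone generated by $\{r(P) : P \in \PC(X)\}$. The elementary point is that a linear functional is nonnegative on a cone iff it is nonnegative on a generating set, and—since each $r(P)$ is a nonzero class and $\overline{\NE}(X)$ is strongly convex—it is strictly positive on $\overline{\NE}(X) \setminus \{0\}$ iff it is strictly positive on each generator (expanding a nonzero $\gamma = \sum_P c_P\, r(P)$ with $c_P \geq 0$ not all zero, one has $D \cdot \gamma = \sum_P c_P\, (D \cdot r(P))$, and strict positivity of every generator forces the sum to be positive). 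Applying this to $D = -K_X$ and recalling $\deg P = (-K_X) \cdot r(P)$ gives at once that $-K_X$ is ample iff $\deg P > 0$ for all $P$, and that $-K_X$ is nef iff $\deg P \geq 0$ for all $P$. This disposes of the Fano case and the nef half of the weak Fano case.

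It remains to show that $-K_X$ is automatically big on any projective toric variety, so that ``$-K_X$ nef'' already upgrades to ``$-K_X$ nef and big.'' For this I would pass to the polytope $P_{-K_X} = \{m \in M_\RR : \langle m, u_\rho\rangle \geq -1 \text{ for all } \rho \in \Sigma(1)\}$ attached to $-K_X = \sum_\rho D_\rho$. Every defining inequality is strict at the origin, so $P_{-K_X}$ contains a neighborhood of $0$ and is full-dimensional, while completeness of $\Sigma$ makes it bounded. Using the standard identification $H^0(X, \cO_X(-mK_X)) = \bigoplus_{m' \in mP_{-K_X} \cap M} k\,\chi^{m'}$, the dimension $h^0(X, \cO_X(-mK_X))$ grows like $\operatorname{vol}(P_{-K_X})\, m^d > 0$, so $-K_X$ has Iitaka dimension $d = \dim X$ and is big. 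Hence weak Fano is equivalent to nef, which is equivalent to $\deg P \geq 0$ for all $P$.

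The one delicate step is this bigness argument, which genuinely uses the toric-specific fact that the anticanonical polytope is full-dimensional; the ample and nef equivalences are purely formal consequences of Kleiman's criterion and the cone theorem. A minor point to verify along the way is that each $r(P)$ is a nonzero class, needed for the strict-positivity direction of the Fano case: this holds because $r(P)$ is represented by a nonempty effective torus-invariant $1$-cycle, which pairs positively with any ample divisor and therefore cannot be numerically trivial.
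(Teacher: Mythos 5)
Your proof is correct and follows essentially the same route as the paper, which deduces the corollary directly from Theorem~\ref{thm:toricconethm}: Kleiman's criterion applied to the finitely generated Mori cone for the ample/nef equivalences, together with the fact (invoked elsewhere in the paper) that $-K_X$ is automatically big on a projective toric variety, via the full-dimensional anticanonical polytope. The only imprecision is your justification that $r(P)\neq 0$, which is mildly circular (calling the representing effective $1$-cycle ``nonempty'' presupposes the nonvanishing you are trying to prove); a cleaner argument is that $N_1(X)$ is identified with the lattice of integer relations among the ray generators, so $r(P)=0$ would force every $x_i$ to occur among the $y_j$ with coefficient $1$ and every $y_j$ to lie in $P$, giving $P=\sigma(P)(1)$, i.e.\ $P$ would generate a cone of $\Sigma$, contradicting that $P$ is a primitive collection.
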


Let $Y$ be a smooth toric variety and consider a collection of line bundles $\mathcal{O}(D_0), \cdots, \mathcal{O}(D_k)$ on $Y$. 
Since the divisor class group of a toric variety is generated by torus-invariant divisors, we may assume that the $D_i$'s are torus-invariant. 
Then we can take the projectivization of the decomposable vector bundle $\cO(D_0) \oplus \cdots \oplus \cO(D_k)$ which is a smooth toric variety $X = \PP_Y(\cO(D_0) \oplus \cdots \oplus \cO(D_k))$. Since tensoring a vector bundle by a line bundle doesn't change the total space of its projectivization, we may assume $D_0 = 0$. 
We will call varieties constructed in this way \textit{toric projective bundles}. We can use primitive relations to state a criterion for a toric variety to be a toric projective bundle.

\begin{proposition}\cite[Proposition 4.1]{Bat:91}
The toric variety $X = X(\Sigma)$ is a toric $\PP^k$-bundle if and only if there exists a primitive collection $P = \{ x_0, \cdots, x_k \} \in \PC(X)$ such that $x_0 + \cdots + x_k = 0$,
and $P$ is disjoint from any other primitive collection.
\end{proposition}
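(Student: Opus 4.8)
The plan is to prove the two implications separately, with the reverse direction ($\Leftarrow$) carrying essentially all of the content; throughout I write $\sigma(1)$ for the rays of a cone $\sigma$.

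\emph{Forward direction.} Suppose $X=\PP_Y(\cO\oplus\cO(D_1)\oplus\cdots\oplus\cO(D_k))$ with $Y=X(\Sigma_Y)$ smooth, $D_0=0$, and $D_1,\dots,D_k$ torus-invariant with support functions $\phi_1,\dots,\phi_k$. I would recall the standard description of its fan $\Sigma$ (see \cite[Chapter 7]{CLS:11}): it lives in $N=N_Y\oplus\ZZ^k$, its fiber rays $u_0,\dots,u_k$ are the images of the rays of the $\PP^k$-fan under $v\mapsto(0,v)$ (so $u_0+\cdots+u_k=0$), its base rays lift those of $\Sigma_Y$, and its maximal cones are $\operatorname{cone}\bigl(\{\text{lifted rays of }\sigma_Y\}\cup\{u_i:i\ne j\}\bigr)$ for $\sigma_Y\in\Sigma_Y$ maximal and $0\le j\le k$. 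From this, $P=\{u_0,\dots,u_k\}$ is at once a primitive collection with $u_0+\cdots+u_k=0$: no cone contains all of $P$ (each maximal cone omits one $u_j$), while every proper subset lies in some cone. Finally every primitive collection other than $P$ is the lift of a primitive collection of $\Sigma_Y$, hence disjoint from $P$: a set meeting $P$ in a nonempty proper subset of fiber rays is not primitive, since if its base rays span a cone of $\Sigma_Y$ then the whole set is a face (add a missing $u_j$), while otherwise its base rays already contain a strictly smaller non-face. This direction is thus a direct unwinding of the construction.

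\emph{Reverse direction, setup.} Suppose $P=\{x_0,\dots,x_k\}\in\PC(X)$ satisfies $x_0+\cdots+x_k=0$ and is disjoint from every other primitive collection. Let $W=\operatorname{span}_\RR(P)$, a $k$-plane. Since $X$ is smooth and $\{x_1,\dots,x_k\}\subsetneq P$ generates a cone of $\Sigma$, these vectors form part of a $\ZZ$-basis of $N$; hence $N_1:=N\cap W=\ZZ x_1\oplus\cdots\oplus\ZZ x_k$, and together with $x_0=-(x_1+\cdots+x_k)$ the cones of $\Sigma$ contained in $W$ are exactly the fan of $\PP^k$ on $N_1$. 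Let $\pi\colon N\to\bar N:=N/N_1$ be the quotient; the goal is to realize $X$ as a $\PP^k$-bundle over $Y:=X(\bar\Sigma)$, where $\bar\Sigma:=\{\pi(\sigma):\sigma\in\Sigma\}$, with $\pi$ inducing the bundle map.

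\emph{Reverse direction, crux and conclusion.} The heart of the argument is to show that $\bar\Sigma$ is a smooth complete fan and that $\pi$ induces a locally trivial toric $\PP^k$-bundle $f\colon X\to Y$. Concretely I would prove that every maximal cone $\sigma\in\Sigma$ contains exactly $k$ of the rays $x_0,\dots,x_k$: at most $k$ because $P$ is not a cone, and---the delicate point---at least $k$, so that the remaining $n$ rays of $\sigma$ project under $\pi$ to a $\ZZ$-basis of a smooth $n$-dimensional cone $\pi(\sigma)\subseteq\bar N_\RR$. Granting this, the $k+1$ maximal cones sitting over a fixed base cone are glued along the walls coming from the relation $x_0+\cdots+x_k=0$, so each fiber of $f$ is a copy of $\PP^k$. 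Disjointness of $P$ from all other primitive collections is exactly what forces the clean dichotomy between \emph{fiber} wall-crossings (which swap one $x_i$ for another and leave $\pi(\sigma)$ unchanged) and \emph{base} wall-crossings (which are images of walls of $\bar\Sigma$), and hence what guarantees that the cones $\pi(\sigma)$ cover $\bar N_\RR$ without overlaps and genuinely form a fan. Once $f\colon X\to Y$ is in hand, choosing a splitting of $0\to N_1\to N\to\bar N\to 0$ expresses each base ray as $(v_\rho,\text{integers})$; reading those integers as the values of support functions $\phi_1,\dots,\phi_k$ of torus-invariant divisors $D_1,\dots,D_k$ on $Y$, and setting $D_0=0$, recovers $X\cong\PP_Y(\cO\oplus\cO(D_1)\oplus\cdots\oplus\cO(D_k))$.

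I expect the main obstacle to be precisely the ``at least $k$'' claim, together with the verification that the projected cones assemble into a non-overlapping complete fan: this is the one step that genuinely uses the disjointness hypothesis, whereas smoothness of the $\pi(\sigma)$, local triviality of $f$, and recovery of the $D_i$ become bookkeeping once the fan structure on $\bar\Sigma$ is established.
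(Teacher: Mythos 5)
You should first be aware that the paper itself offers no proof of this statement: it is quoted verbatim from Batyrev \cite[Proposition 4.1]{Bat:91}, so there is no in-paper argument to compare yours against, and your attempt must stand on its own. Your forward direction does: the description of the fan of $\PP_Y(\cO \oplus \cO(D_1) \oplus \cdots \oplus \cO(D_k))$, the check that the fiber rays form a primitive collection with sum zero, and the two-case argument showing any other primitive collection consists of base rays (hence is disjoint from $P$) are all correct and complete modulo standard facts.

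The reverse direction, however, has a genuine gap, and it sits exactly where you say you ``expect the main obstacle'' to be: you never prove that every maximal cone contains at least $k$ of the rays $x_0, \ldots, x_k$, nor that the projected cones assemble into a fan; you only assert that disjointness ``forces'' these. Identifying the decisive claim is not the same as establishing it, so as written this is a plan, not a proof. The missing step does follow from disjointness by a short argument you could have run: if $\sigma$ is a maximal cone and $x_i \notin \sigma(1)$, then $\sigma(1) \cup \{x_i\}$ generates no cone of $\Sigma$ (a full-dimensional cone cannot be a proper face of another cone), hence it contains a minimal non-face $Q$, which is by definition a primitive collection; $Q$ must contain $x_i$ since every subset of $\sigma(1)$ spans a face, so $Q$ meets $P$, and disjointness forces $Q = P$, giving $P \setminus \{x_i\} \subseteq \sigma(1)$ and hence ``exactly $k$'' in one stroke. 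The fan structure downstairs then also needs an actual argument rather than the ``dichotomy'' you gesture at; one way: each $\tau_i = \Cone(P \setminus \{x_i\})$ spans $W$, so its star projects to a complete smooth fan in $\bar N_\RR$ (the fan of the orbit closure $V(\tau_i)$); and if $\sigma = \tau_i + \hat\sigma$ with $\hat\sigma = \Cone(\sigma(1) \setminus P)$, then crossing the wall of $\sigma$ obtained by deleting $x_l$ ($l \neq i$) must, by the ``exactly $k$'' claim applied to the neighboring maximal cone, introduce precisely the ray $x_i$, so $\tau_l + \hat\sigma \in \Sigma$ for every $l$. Hence the $k+1$ star-quotient fans have the same maximal cones and coincide; this common fan is your $\bar\Sigma$, it splits $\Sigma$, and from there your final bookkeeping paragraph recovering the $D_i$ goes through. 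Without some such argument supplied, the crux of the proposition remains unproved.
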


\begin{definition}
	A fan is a \textit{splitting fan} if any two distinct primitive collections are disjoint.
\end{definition}

\begin{proposition}\cite[Theorem 4.3]{Bat:91}
	Let $\Sigma$ be a splitting fan. Then the corresponding toric variety $X(\Sigma)$ is a projectivization of a decomposable bundle over a toric variety associated to a splitting fan of a smaller dimension.
\end{proposition}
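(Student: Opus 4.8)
The plan is to reduce the statement to the toric $\PP^k$-bundle criterion of the preceding proposition (Batyrev's Proposition 4.1). A splitting fan is exactly one in which the primitive collections are pairwise disjoint, so that criterion applies the instant we can locate a single primitive collection $P_0 = \{x_0, \dots, x_k\} \in \PC(X)$ whose primitive relation is trivial, i.e. $x_0 + \cdots + x_k = 0$. Such a $P_0$ is automatically disjoint from every other primitive collection, so the cited proposition immediately exhibits $X(\Sigma)$ as a toric $\PP^k$-bundle over a toric base $Y$; by the construction of toric projective bundles recalled just before that proposition, a toric $\PP^k$-bundle is the projectivization of a split bundle $\cO(D_0) \oplus \cdots \oplus \cO(D_k)$. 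Thus the content of the theorem concentrates into two claims: (i) a splitting fan always contains a primitive collection with trivial relation, and (ii) the base $Y$ is again the toric variety of a splitting fan of strictly smaller dimension.

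The main obstacle is claim (i), the combinatorial heart of the result. The approach I would take begins from Theorem~\ref{thm:toricconethm}: the classes $r(P)$ generate $\overline{\NE}(X)$, and for a splitting fan one verifies that they are linearly independent, so that $\overline{\NE}(X)$ is a simplicial cone, the number of primitive collections equals the Picard rank, and — what I will actually use — every ray of $\Sigma$ belongs to some primitive collection. Suppose, for contradiction, that every primitive collection $P$ has a \emph{nonempty} right-hand side, with relation $\sum_{x \in P} x = \sum_j a_j\, y_j$, where $a_j > 0$ and $y_j \in \rays(\Sigma) \setminus P$. Form the directed graph on the set of primitive collections with an edge $P \to Q$ whenever some right-hand-side generator $y_j$ of $P$ lies in $Q$; since no ray escapes all collections, every vertex has out-degree at least one, so this finite graph contains a directed cycle. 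To rule the cycle out I would feed in completeness and strict convexity: completeness places $0$ in the interior of the cone spanned by $\rays(\Sigma)$, yielding a strictly positive relation $\sum_\rho c_\rho v_\rho = 0$ with all $c_\rho > 0$, while the support function $\psi$ of an ample divisor satisfies $\sum_{x \in P} \psi(x) < \sum_j a_j \psi(y_j)$ for each $P$ (strict, because $P$ lies in no single cone). Combining these global data along the cycle should produce a contradiction, forcing some primitive collection to have empty right-hand side. I expect converting this convexity/completeness input into acyclicity of the combinatorial graph to be the delicate step; it is essentially the core of Batyrev's argument.

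For claim (ii), let $P_0 = \{x_0, \dots, x_k\}$ (with $k \geq 1$) be a trivial-relation primitive collection produced above. Smoothness makes each proper subset of $P_0$ part of a $\ZZ$-basis, so $x_0, \dots, x_k$ span a $k$-dimensional subspace $L = \mathrm{span}(P_0) \subseteq N_\RR$ cut out by the single relation $\sum_i x_i = 0$. The base furnished by Proposition 4.1 is $Y = X(\Sigma')$, where $\Sigma'$ is the image of $\Sigma$ in $(N/(N \cap L))_\RR$: the rays of $P_0$ map to $0$, the remaining rays of $\Sigma$ map to the rays of $\Sigma'$, and the generic fibre is the $\PP^k$ whose fan is $\{x_0, \dots, x_k\}$. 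One checks that $\Sigma'$ is complete and smooth with $\dim Y = d - k < d$. The primitive collections of $\Sigma'$ are precisely the images of the primitive collections of $\Sigma$ other than $P_0$; since the projection is injective on rays outside $P_0$ and those collections were already pairwise disjoint in $\Sigma$, their images stay pairwise disjoint, so $\Sigma'$ is again a splitting fan. Together with the identification of the $\PP^k$-bundle as a projectivized split bundle, this completes the proof — and iterating the construction recovers the full description of $X(\Sigma)$ as a tower of projectivized decomposable bundles.
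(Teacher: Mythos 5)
The paper does not prove this statement at all---it is quoted verbatim from \cite[Theorem 4.3]{Bat:91}---so your proposal must be judged against Batyrev's original argument. Your reduction is the right one: granting the $\PP^k$-bundle criterion of \cite[Proposition 4.1]{Bat:91}, everything rests on (i) every splitting fan contains a primitive collection whose relation is $x_0+\cdots+x_k=0$, and (ii) the base is again a splitting fan of smaller dimension. But (i), which you yourself identify as the combinatorial heart, is precisely what you do not prove: you build the directed graph on primitive collections, note that if (i) fails every vertex has positive out-degree and hence a directed cycle exists, and then say that feeding in completeness and strict convexity ``should produce a contradiction.'' That deferred step \emph{is} the theorem; without it you have a reduction, not a proof.

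Worse, the mechanism you sketch cannot close as described, and the paper's own Example~\ref{ex:fanobatyrevsurface} shows why. In $\mathrm{Bl}_1(\PP^1\times\PP^1)$ the two \emph{disjoint} primitive collections $\{x_1,x_4\}$ and $\{x_3,x_5\}$, with relations $x_1+x_4=x_5$ and $x_3+x_5=x_4$, form a $2$-cycle in exactly your graph; this variety is smooth, projective, even Fano, so every ingredient you propose to use---disjointness of the collections appearing in the cycle, their primitive relations, the strict support-function inequalities $\sum_{x\in P}\psi(x)<\sum_j a_j\psi(y_j)$ for an ample divisor, and the strictly positive relation among all rays coming from completeness---is present there, yet no contradiction exists. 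Concretely, summing those inequalities around a cycle does not telescope: the right-hand side for $P_i$ contains only \emph{one} ray of $P_{i+1}$, weighted by an unknown coefficient, while the next left-hand side contains \emph{all} of $P_{i+1}$. So any proof of acyclicity must invoke the hypothesis that \emph{all} primitive collections of $\Sigma$ are pairwise disjoint, used globally; that is the content of Batyrev's structure theory of primitive relations (his Section 3), and it is exactly what your argument omits. Two smaller points: the fact that every ray lies in some primitive collection does not follow from linear independence of the $r(P)$'s (it has a direct proof: choose a maximal cone $\sigma$ with $v\notin\sigma(1)$ and a minimal $S\subseteq\sigma(1)$ such that $\{v\}\cup S$ spans no cone); invoking an ample divisor presupposes projectivity of $X(\Sigma)$, which for an abstract splitting fan is not yet known at that point; and in (ii) the assertion that the primitive collections of $\Sigma'$ are precisely the images of those of $\Sigma$ other than $P_0$ is stated but never verified.
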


Varieties of the form above which are iterated toric projective bundles are sometimes called ``generalized Bott towers'' in the literature.

Kleinschmidt and Sturmfels showed that a smooth proper toric variety of Picard rank $3$ is always projective \cite{KlSt:91}. Batyrev proved a strong theorem classifying smooth projective varieties of Picard rank $3$ into two types, based on the number of primitive collections.

\begin{theorem}\cite[Theorem 5.7]{Bat:91}\label{thm:rank3classification}
	A smooth projective toric variety $X$ of Picard rank $3$ has either three or five primitive collections. 
    If $X$ has three primitive collections, then $\Sigma$ is a splitting fan, so $X = \PP_Y(\cO \oplus \cO(D_1) \oplus \cdots \oplus \cO(D_k))$ for some toric divisors $D_1,\cdots,D_k \in \Pic(Y)$, where $Y$ is a smooth projective toric variety of Picard rank $2$.
\end{theorem}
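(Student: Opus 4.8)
The statement splits into a counting assertion (three or five primitive collections) and a structural assertion (three forces a projective bundle). My plan is to decouple the two via the splitting property. Write $n = \abs{\rays(\Sigma)} = d+3$ with $d = \dim X$. By Theorem~\ref{thm:toricconethm} the primitive relations $r(P)$ generate the Mori cone $\overline{\NE}(X)$, a cone in the $3$-dimensional real vector space $N_1(X) \otimes \RR$; since $X$ is projective this cone is full-dimensional and strongly convex, hence has at least three extremal rays, each generated by some $r(P)$. Distinct extremal rays force distinct primitive collections, so there are always at least three. I claim it suffices to prove: (A) if $\Sigma$ is a splitting fan then $X$ has exactly three primitive collections; and (B) if $\Sigma$ is not a splitting fan then $X$ has exactly five. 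Indeed, (A) and (B) give the dichotomy, and because five occurs only in the non-splitting case, having exactly three primitive collections forces $\Sigma$ to be splitting --- which is precisely the hypothesis the structural assertion consumes.

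For (A), I would invoke the splitting-fan structure theorem \cite[Theorem 4.3]{Bat:91}: a splitting fan of Picard rank $3$ presents $X$ as an iterated projectivization of decomposable bundles, with exactly three projectivization stages (one per unit of Picard rank). Each stage contributes one primitive collection, namely the rays spanning its $\PP^{k_i}$-fiber, whose sum is the corresponding fiber curve class. These three collections are pairwise disjoint and, since $\sum_i (k_i+1) = d+3 = n$, they partition all of $\rays(\Sigma)$. Any further primitive collection would then have to meet one of these blocks, contradicting the splitting hypothesis; so there are exactly three.

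Statement (B) is the crux and the main obstacle. Non-splitting means two distinct primitive collections $P, P'$ satisfy $P \cap P' \neq \emptyset$. Following Batyrev, the engine is an overlap lemma: from the primitive relations of two overlapping collections, together with the unimodularity forced by smoothness and the balancing forced by completeness, one extracts constraints on the generators of $\sigma(P)$ and $\sigma(P')$ and, as a rule, a \emph{new} primitive collection supported on $(P \cup P') \setminus (P \cap P')$ and the relevant cone generators. The real work is to show that in the rank-$3$ setting this production closes up: the rays organize into exactly five nonempty blocks $V_0, \dots, V_4$ arranged cyclically, and the five primitive collections are exactly those read off from the cyclic pattern, as in the Batyrev recipe developed in Section~\ref{sec:batyrevconstruction} (for $d=2$ this is the pentagon of diagonals visible in Example~\ref{ex:fanobatyrevsurface}). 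Controlling this closure --- ruling out four, six, or more collections and showing the five blocks are forced --- is where essentially all the combinatorial difficulty lies, and I expect most of the effort to go into verifying that no configuration other than the pentagon is compatible with smoothness and completeness.

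Granting (A) and (B), the structural assertion follows quickly. If $X$ has exactly three primitive collections, then by the dichotomy it is not the five-collection (non-splitting) case, so $\Sigma$ is splitting. Applying \cite[Theorem 4.3]{Bat:91} once more, $X = \PP_Y(E)$ for a decomposable bundle $E$ over a smaller-dimensional splitting-fan toric variety $Y$; since a projectivization raises Picard rank by exactly one, $\rho(Y) = 2$, and $Y$ is projective by Kleinschmidt--Sturmfels \cite{KlSt:91}. Twisting $E$ by a line bundle to normalize its first summand yields $X = \PP_Y(\cO \oplus \cO(D_1) \oplus \cdots \oplus \cO(D_k))$ with $D_1, \dots, D_k \in \Pic(Y)$, as claimed.
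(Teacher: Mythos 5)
Your reduction is sound as far as it goes, and it mirrors the logical structure actually underlying this theorem: the lower bound of three collections coming from the Mori cone being full-dimensional and strongly convex in $N_1(X)\otimes\RR \cong \RR^3$, statement (A) for splitting fans via \cite[Theorem 4.3]{Bat:91}, and the final twist-and-normalize step producing $\PP_Y(\cO \oplus \cO(D_1) \oplus \cdots \oplus \cO(D_k))$ are all correct. For the record, the paper itself does not prove this statement at all: it quotes it directly as \cite[Theorem 5.7]{Bat:91}, with projectivity handled by Kleinschmidt--Sturmfels \cite{KlSt:91}, so any comparison is against Batyrev's original argument rather than a proof in the paper.

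The genuine gap is statement (B), which you yourself flag as ``the crux'': it is never proved. Everything you write about it --- the overlap lemma, the production of a new primitive collection supported on $(P \cup P') \setminus (P \cap P')$, the claim that the rays organize into exactly five blocks arranged cyclically, and the exclusion of four, six, or more collections --- is a description of what needs to be shown, not an argument. This is not a routine verification: it is the entire content of Batyrev's Theorem 5.7 (and of the construction in his Theorem 6.6 that Section~\ref{sec:batyrevconstruction} relies on), and in Batyrev's paper it occupies a chain of propositions analyzing how two intersecting primitive collections constrain each other through smoothness (unimodularity of maximal cones) and completeness before the pentagon structure emerges. Moreover, your structural conclusion ``three collections $\Rightarrow$ splitting'' is obtained precisely as the contrapositive of (B), so the unproved step is load-bearing for both halves of the theorem: without it you have neither the dichotomy ``three or five'' nor the hypothesis needed to apply \cite[Theorem 4.3]{Bat:91}. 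To complete the proposal you would either have to carry out that combinatorial analysis in full or do what the paper does and simply cite \cite[Theorem 5.7]{Bat:91}.
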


When $X$ has five primitive collections, it can be constructed using a combinatorial recipe described in \cite[Theorem 6.6]{Bat:91}. 
In order to pursue our classification of weak Fano toric varieties of Picard rank $3$, we will consider the cases of three and five primitive collections separately in the next two sections. 

\section{Projective bundles}\label{sec:projbundle}

Let $X$ be a weak Fano toric variety of Picard rank 3 with three primitive collections. 
According to Theorem~\ref{thm:rank3classification}, we know there exists a smooth projective toric variety $Y$ and toric divisors $D_1, \cdots, D_k$ on $Y$ such that $X \cong \PP_Y(\cO \oplus \cO(D_1) \oplus \cdots \oplus \cO(D_k))$. 
In fact, the toric variety $Y$ must also be weak Fano by the following proposition.

\begin{proposition}\label{prop:baseWeakFano}
	Let $\E$ be a decomposable vector bundle on a projective toric variety $X$ so that the projectivization $\PP_X(\E)$ is a toric projective bundle. If $\PP_X(\E)$ is Fano (\resp weak Fano), then $X$ is also Fano (\resp weak Fano).
\end{proposition}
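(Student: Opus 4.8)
The plan is to reduce the statement to the combinatorial criterion of Corollary~\ref{cor:wFanocriterion} and to compare the primitive relations of $\PP_X(\E)$ with those of the base $X$. Write $\E = \cO\oplus\cO(D_1)\oplus\cdots\oplus\cO(D_k)$ (normalizing the $\cO$-summand to be untwisted), let $\Sigma\subseteq N_\RR$ be the fan of $X$, and let $\tilde\Sigma\subseteq (N\oplus\ZZ^k)_\RR$ be the fan of $\pi\colon\PP_X(\E)\to X$. I would first recall the explicit description of $\tilde\Sigma$: over each ray $\rho\in\Sigma(1)$ with primitive generator $u_\rho$ there is a ray with generator $\tilde u_\rho = (u_\rho, b_\rho)$ for a suitable $b_\rho\in\ZZ^k$ determined by the $D_i$, together with the $k+1$ fiber rays $w_0,\dots,w_k$ (the ray generators of the fan of $\PP^k$ inside $\{0\}\oplus\ZZ^k$), which satisfy $w_0+\cdots+w_k=0$ and positively span the fiber $\{0\}\oplus\RR^k$. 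The cones of $\tilde\Sigma$ are exactly $\Cone\bigl(\{\tilde u_\rho : \rho\in\sigma(1)\}\cup\{w_l : l\in I\}\bigr)$ for $\sigma\in\Sigma$ and proper subsets $I\subsetneq\{0,\dots,k\}$, so $\pi$ is a map of fans lifting each cone of $\Sigma$.

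Next I would show that every primitive collection $P = \{u_{\rho_1},\dots,u_{\rho_n}\}$ of $X$ lifts to a primitive collection $\tilde P = \{\tilde u_{\rho_1},\dots,\tilde u_{\rho_n}\}$ of $\PP_X(\E)$. This is immediate from the cone description above: a set of ``base'' rays $\{\tilde u_\rho\}$ spans a cone of $\tilde\Sigma$ if and only if the corresponding $\{u_\rho\}$ spans a cone of $\Sigma$. Hence $\tilde P$ fails to span a cone while each of its proper subsets does, which is exactly the primitive collection condition.

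I would then compare degrees. Let $u_{\rho_1}+\cdots+u_{\rho_n} = \sum_j a_j u_{\sigma_j}$ be the primitive relation of $P$ in $X$, where the $u_{\sigma_j}$ generate $\sigma(P)$. Applying the lifts and subtracting, the base components cancel, leaving $\sum_i \tilde u_{\rho_i} - \sum_j a_j \tilde u_{\sigma_j} = (0,c)$ for some $c\in\ZZ^k$. Since $\sum_i\tilde u_{\rho_i}$ projects into the relative interior of $\sigma(P)$, the cone $\sigma(\tilde P)$ projects onto $\sigma(P)$, so its base generators are precisely the $\tilde u_{\sigma_j}$ and its remaining generators are fiber rays; writing $(0,c)=\sum_{l\in S}\mu_l w_l$ with all $\mu_l\ge 0$ (possible as the $w_l$ positively span the fiber) yields the primitive relation
\[
\sum_i \tilde u_{\rho_i} = \sum_j a_j\,\tilde u_{\sigma_j} + \sum_{l\in S}\mu_l\, w_l
\]
of $\tilde P$. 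Consequently $\deg\tilde P = n - \sum_j a_j - \sum_{l\in S}\mu_l = \deg P - \sum_{l\in S}\mu_l \le \deg P$.

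Finally I would conclude: if $\PP_X(\E)$ is weak Fano (\resp Fano), then $\deg\tilde P\ge 0$ (\resp $>0$) for every primitive collection by Corollary~\ref{cor:wFanocriterion}, so in particular $\deg P \ge \deg\tilde P \ge 0$ (\resp $\deg P\ge\deg\tilde P>0$) for every $P\in\PC(X)$, whence $X$ is weak Fano (\resp Fano). The main obstacle is the degree comparison in the previous paragraph: one must correctly identify the cone $\sigma(\tilde P)$ and verify that the fiber part of the primitive relation contributes a \emph{nonnegative} amount $\sum_{l\in S}\mu_l$ to the subtracted coefficients. It is exactly this monotonicity $\deg\tilde P\le\deg P$, rather than equality, that makes the implication run in the right direction, and it is where the relation $w_0+\cdots+w_k=0$ together with the positivity of the fiber generators is used.
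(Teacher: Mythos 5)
Your proof is correct, but it takes a genuinely different route from the paper's. The paper argues geometrically: the Fano case is quoted directly from \cite[Theorem 1.6]{SzWi:90}, and for the weak Fano case the authors use that the Mori cone of $\PP_X(\E)$ is generated by finitely many rational curves, invoke the inequality $p^*(-K_X)\cdot C_i\ge 0$ from the proof of \cite[Lemma 1.5]{SzWi:90}, and then descend nefness of $-K_X$ via the projection formula applied to a curve of $\PP_X(\E)$ dominating a given curve of $X$ (bigness being automatic for toric varieties). You instead stay entirely inside the paper's combinatorial toolkit: you lift each primitive collection $P$ of $X$ to a primitive collection $\tilde P$ of the bundle, using the standard fan description of a projectivized split bundle (\cite[Section 7.3]{CLS:11}, \cite{Bat:91}), show that the primitive relation of $\tilde P$ is the lift of that of $P$ plus a fiber correction with nonnegative coefficients, and conclude $\deg P\ge\deg\tilde P$, after which Corollary~\ref{cor:wFanocriterion} yields the Fano and weak Fano statements simultaneously. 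Your route buys a uniform, self-contained argument (no case split, no external citation) and the sharper quantitative statement $\deg P\ge\deg\tilde P$ for every lifted collection; the paper's route avoids setting up the bundle fan and identifying $\sigma(\tilde P)$, and its nefness-descent argument is conceptually robust beyond the combinatorics. One small point of rigor in your write-up: the parenthetical justification that $(0,c)=\sum_{l\in S}\mu_l w_l$ with $\mu_l\ge 0$ ``because the $w_l$ positively span the fiber'' is not quite the right reason, since that alone neither pins down the index set $S$ nor shows this expression is the primitive relation; the correct reason, which your surrounding sentences essentially supply, is that $\sum_i\tilde u_{\rho_i}$ lies in the relative interior of the smooth (hence simplicial) cone $\sigma(\tilde P)$, whose base generators are exactly the $\tilde u_{\sigma_j}$, so uniqueness of coefficients forces the base coefficients to equal the $a_j$ and forces the remainder $(0,c)$ to be a positive combination of the fiber generators of $\sigma(\tilde P)$. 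Note also that your argument (like the degree criterion itself) uses smoothness of $X$, which is implicit in the paper's definition of a toric projective bundle.
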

\begin{proof}	
The Fano case is proved in \cite[Theorem 1.6]{SzWi:90}. Now let's assume $\PP_X(\E)$ is weak Fano.
We denote by $p \colon \PP_X(\E) \to X$ the projection.
Since $\PP_X(\E)$ is a projective toric variety, the discussion after Definition~\ref{defn:primitive} shows that the Mori cone $\overline{\NE}(\PP_X(\E))$ is generated by the classes of finitely many rational curves $C_1,\cdots,C_n$ on $\PP_X(\E)$.
By the proof of \cite[Lemma 1.5]{SzWi:90}, we have $p^*(-K_X) \cdot C_i \geq 0$ for each $i$ since $\PP_X(\E)$ is weak Fano. 
Now for any integral curve $D$ in $X$, there exists an integral curve $C$ in $\PP_X(\E)$ such that $p(C) = D$.
Together with projection formula, we have $0 \leq p^*(-K_X) \cdot C = (-K_X) \cdot p_*C = m(-K_X \cdot D)$ for some positive integer $m \in \ZZ$ and thus it implies that $-K_X \cdot D \geq 0$. 
Therefore $X$ is also weak Fano.
\end{proof}

\subsection{Toric varieties of Picard rank $2$}\label{subsec:Kleinschmidt}

Kleinschmidt in \cite{Kle:88} studied smooth proper toric varieties of Picard rank $2$, and proved that they are all projective bundles of decomposable vector bundles over projective spaces. 
That is, if $X$ is such a variety of dimension $d'$, then there exists a positive integer $r < d'$ and a nondecreasing sequence of nonnegative integers $a_1, \cdots, a_{d'-r}$ such that
\[
	X \cong X_r(a_1, \cdots, a_{d'-r}) \colonequals \PP_{\PP^r}(\cO \oplus \cO(a_1) \oplus \cdots \oplus \cO(a_{d' - r})).
\] 
We call varieties constructed in this way \textit{Kleinschmidt varieties}.

Furthermore, these $r$, $a_1, \cdots, a_{d'-r}$ are uniquely determined, except for the isomorphism 
\[X_r(\underbrace{0, \cdots, 0}_{\text{$(d'-r)$ copies}}) \cong X_{d'-r}(\underbrace{0, \cdots, 0}_{\text{$r$ copies}})\] which corresponds to the isomorphism switching the factors in a direct product of projective spaces: $\PP^r \times \PP^{d'-r} \cong \PP^{d'-r} \times \PP^r$.

The projective bundle $X$ has $d' + 2$ rays which can be decomposed into a disjoint union of two primitive collections $\rays(X) = \X \sqcup \Y$ where
\[\X = \{x_{d' - r + 1},x_1,\cdots,x_{d' - r}\},\quad \Y = \{y_1, \cdots, y_{r + 1}\},\]
and the primitive generators of the rays in $\rays(X)$ are explicitly given by
\[\begin{array}{lll}
    x_{d'-r+1} & = (-1, \cdots, -1, & 0, \cdots, 0)   \\
    x_1 & = (1, \cdots, 0, & 0, \cdots, 0) \\
    & \vdots & \vdots  \\
    x_{d' - r} & = (0,\cdots, 1, & 0, \cdots, 0) \\
    y_1 & = (0, \cdots, 0, & 1, \cdots, 0) \\
    & \vdots & \vdots  \\
    y_{r} & = (0,\cdots, 0, & 0,\cdots, 1) \\
    y_{r + 1} & = (a_1,\cdots, a_{d'-r}, & -1, \cdots, -1).
\end{array}\]
The primitive relations corresponding to primitive collections $\X$ and $\Y$ are given respectively by
\begin{align}
    & \sum_{i = 1}^{d' - r + 1} x_i = 0, \label{eq:2primitivereln1} \\
    & \sum_{i = 1}^{r + 1}y_i = \sum_{i = 1}^{d' - r}a_ix_i. \label{eq:2primitivereln2}
\end{align}

Thus $X$ is Fano (\resp weak Fano) if $r + 1 - \sum_{i = 1}^{d' - r}a_i > 0$ (\resp $\geq 0$). We give complete lists of weak Fano toric threefolds and fourfolds of Picard rank $2$ in Appendices \ref{app:2Picard3Fold} and \ref{app:2Picard4Fold} respectively. 

The projective bundle $X$ has maximal cones given by $\Cone(\rays(X) \smallsetminus \{x_i,y_j\})$ where $x_i \in \X$ and $y_j \in \Y$. 
Thus any pair $\{D_{x_i},D_{y_j}\}$ forms a basis for $\Pic(X)$. To be consistent with the construction in Section \ref{subsec:projectiveBundle}, we use the basis $\{D_{x_{d' - r + 1}}, D_{y_{r + 1}}\}$ to describe elements of $\Pic(X)$ going forward.

\begin{proposition}\label{prop:positivity}
    The Cartier divisor $D = bD_{x_{d'-r+1}} + cD_{y_{r + 1}}$ on $X$ is ample (\resp nef) if $b > 0$ (\resp $b \geq 0$) and $c > 0$ (\resp $c \geq 0$). 
\end{proposition}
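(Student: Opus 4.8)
The plan is to reduce the statement to the description of the Mori cone from Theorem~\ref{thm:toricconethm} together with the duality between nef and effective curve classes. Since $X$ is a Kleinschmidt variety it has exactly the two primitive collections $\X$ and $\Y$, so by Theorem~\ref{thm:toricconethm} the Mori cone $\overline{\NE}(X)$ is the cone generated by the two primitive relation classes $r(\X), r(\Y) \in N_1(X)$. A Cartier divisor on the projective variety $X$ is nef precisely when it pairs nonnegatively with every class of $\overline{\NE}(X)$, and (by Kleiman's criterion) ample precisely when it pairs strictly positively with every nonzero such class; because this cone is generated by $r(\X)$ and $r(\Y)$, it suffices to compute the two intersection numbers $D \cdot r(\X)$ and $D \cdot r(\Y)$.

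To compute these I would use the pairing $\Pic(X) \times N_1(X) \to \ZZ$ induced by the standard pairing on the free group $\ZZ^{\rays(X)}$ on the rays: writing a divisor class as $\sum_\rho c_\rho D_\rho$ and a curve class as a relation $(b_\rho)$ among the ray generators, one has $\left(\sum_\rho c_\rho D_\rho\right) \cdot (b_\rho) = \sum_\rho c_\rho b_\rho$, which is well defined because $(b_\rho)$ is a relation and hence annihilates every principal divisor. Here $D = b D_{x_{d'-r+1}} + c D_{y_{r+1}}$ is supported only at $x_{d'-r+1}$ and $y_{r+1}$, while by \eqref{eq:2primitivereln1} and \eqref{eq:2primitivereln2} the class $r(\X)$ is the relation equal to $1$ on every ray of $\X$ and $0$ on $\Y$, and $r(\Y)$ is the relation equal to $1$ on every ray of $\Y$, equal to $-a_i$ on $x_i$ for $1 \le i \le d'-r$, and $0$ on $x_{d'-r+1}$. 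Reading off the two relevant entries then gives $D \cdot r(\X) = b$ and $D \cdot r(\Y) = c$.

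With these two numbers the proposition is immediate: if $b \ge 0$ and $c \ge 0$ then $D$ pairs nonnegatively with both generators, hence with all of $\overline{\NE}(X)$, so $D$ is nef; and if $b > 0$ and $c > 0$ then any nonzero $\gamma = \lambda r(\X) + \mu r(\Y)$ with $\lambda, \mu \ge 0$ satisfies $D \cdot \gamma = \lambda b + \mu c > 0$, so $D$ is ample. In fact the computation shows $r(\X)$ and $r(\Y)$ are the dual basis to $\{D_{x_{d'-r+1}}, D_{y_{r+1}}\}$, so $\overline{\NE}(X)$ is the full first quadrant, $\Nef(X)$ is its dual, and the converse implications hold as well.

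I expect the only delicate point to be bookkeeping: correctly recording the sign conventions and the precise support of the two relation vectors coming from \eqref{eq:2primitivereln1}--\eqref{eq:2primitivereln2} (in particular that $x_{d'-r+1}$ occurs with coefficient $0$ in $r(\Y)$ and that the $a_i$ enter with a minus sign), and confirming that the two generators are linearly independent so that testing positivity on them genuinely characterizes ampleness. Everything else is the standard duality between $\Pic(X)$ and $N_1(X)$ for a complete toric variety.
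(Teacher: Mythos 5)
Your proposal is correct and follows essentially the same route as the paper: the paper simply cites \cite[Theorem 6.4.9]{CLS:11} (the toric ampleness/nefness criterion in terms of primitive relations) and reads off the intersection numbers $b$ and $c$ from the relations \eqref{eq:2primitivereln1} and \eqref{eq:2primitivereln2}, exactly the computation you perform. The only difference is that you rederive that cited criterion from Theorem~\ref{thm:toricconethm} together with Kleiman's criterion and nef--Mori cone duality, which is precisely how the cited result is proved, so no new idea is involved.
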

\begin{proof}
    Let $D = bD_{x_{d' - r + 1}} + cD_{y_{r + 1}}$. 
    By \cite[Theorem 6.4.9]{CLS:11}, $D$ is ample (\resp nef) if and only if $b > 0$ (\resp $b \geq 0$) and $c > 0$ (\resp $c \geq 0$) according to the primitive relations \eqref{eq:2primitivereln1} and \eqref{eq:2primitivereln2}.
\end{proof}

\subsection{Projective bundles of Picard rank $3$}\label{subsec:projectiveBundle}

By Proposition~\ref{prop:baseWeakFano}, any weak Fano toric variety of Picard rank $3$ with three primitive collections is the projectivization of a decomposable vector bundle over a weak Fano toric variety of Picard rank $2$. 
We can describe all such varieties of dimension $d$ as follows.

Let $r$, $d'$ be positive integers with $r < d' < d$ and let $a_1, \cdots, a_{d'-r}$ be a nondecreasing sequence of nonegative integers. 
Let $X$ be the projective bundle over $Y = \PP^{r}$ defined as in Section \ref{subsec:Kleinschmidt} by \[X = \PP_Y(\cO \oplus \cO(a_1) \oplus \cdots \oplus \cO(a_{d'-r})).\] 
Then given integers $b_i$ and $c_i$ for $1 \leq i \leq d-d'$, let $Z$ be the $\PP^{d - d'}$-bundle over $X$ defined by \[Z = \PP_{X}(\cO \oplus \cO(b_1,c_1) \oplus \cdots \oplus \cO(b_{d-d'},c_{d-d'}))\] 
where we assume all $b_i \geq 0$ and the pairs $\{(b_i,c_i)\}$ are ordered lexicographically. 
Here by $\mathcal{O}(b, c)$ we mean the line bundle associated to the divisor $bD_{x_{d'-r+1}}+cD_{y_{r+1}}$, using the basis for $\Pic(X)$ described in Section~\ref{subsec:Kleinschmidt}.
By \cite{Bat:91}, the toric variety $Z$ has three disjoint primitive collections 
\[\rays(Z) = \X \sqcup \Y \sqcup \Z = \{x_{d'-r+1},x_1,\cdots,x_{d'-r}\} \sqcup \{y_1,\cdots,y_{r + 1}\} \sqcup \{z_1,\cdots,z_{d-d'+1}\},\]
with the primitive relation $\sum_{i = 1}^{d - d' + 1} z_i = 0$ endowing $Z$ with the $\PP^{d-d'}$-bundle structure. 
The maximal cones of $Z$ are also easy to describe.
\begin{proposition}\label{prop:3-maxcones}
    Let $\rays(Z) = \X \sqcup \Y \sqcup \Z$ be a disjoint union of three primitive collections each of which consists of at least two rays. 
    Then the maximal cones of the toric variety $Z$ are all sets of type $\Cone(\rays(Z) \smallsetminus \{x_i,y_j,z_k\})$ where $x_i \in \X$, $y_j \in \Y$ and $z_k \in \Z$.
\end{proposition}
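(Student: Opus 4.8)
The plan is to reduce the statement to the standard combinatorial description of the cones of a simplicial fan in terms of its primitive collections. Since $Z$ is smooth its fan is simplicial, and the key fact I would use is that a subset $S \subseteq \rays(Z)$ generates a cone of $Z$ if and only if $S$ contains no primitive collection (\cf \cite[Chapter 6]{CLS:11}). First I would record the short proof of this equivalence. If $S$ generates a cone $\sigma$, then since $\sigma$ is simplicial every subset of $S$ generates a face of $\sigma$, hence a cone; as a primitive collection generates no cone by definition, $S$ can contain no primitive collection. Conversely, if $S$ generates no cone, then among the subsets of $S$ that generate no cone choose a minimal one $P$; by minimality every proper subset of $P$ generates a cone, so $P$ is by definition a primitive collection contained in $S$. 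In other words, the primitive collections are exactly the minimal non-faces of the simplicial complex underlying the fan.

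Next I would apply this to our situation, in which the primitive collections of $Z$ are precisely $\X$, $\Y$, and $\Z$. The characterization then says that $S$ generates a cone if and only if $\X \not\subseteq S$, $\Y \not\subseteq S$, and $\Z \not\subseteq S$; equivalently, $S$ omits at least one ray from each of $\X$, $\Y$, and $\Z$. Because $\X$, $\Y$, $\Z$ are pairwise disjoint and partition $\rays(Z)$, these three omission conditions are mutually independent, which is what makes the bookkeeping clean.

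Finally I would identify the maximal cone-generating subsets. On the one hand, each set $\rays(Z) \smallsetminus \{x_i, y_j, z_k\}$ with $x_i \in \X$, $y_j \in \Y$, $z_k \in \Z$ omits exactly one ray from each primitive collection, hence generates a cone by the characterization; moreover, adjoining any one of the three omitted rays restores all of the corresponding collection (for instance $\rays(Z) \smallsetminus \{y_j, z_k\} \supseteq \X$), so the resulting set contains a primitive collection and no longer generates a cone. Thus each such set is a maximal cone. On the other hand, given any cone-generating subset $S$, pick omitted rays $x_i \in \X \smallsetminus S$, $y_j \in \Y \smallsetminus S$, $z_k \in \Z \smallsetminus S$; then $S \subseteq \rays(Z) \smallsetminus \{x_i, y_j, z_k\}$, so every cone is contained in one of the displayed cones. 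Together these two observations show the maximal cones are exactly the sets $\Cone(\rays(Z) \smallsetminus \{x_i, y_j, z_k\})$.

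I expect the only subtle point to be the characterization in the first paragraph, specifically the verification that a minimal non-generating subset is genuinely a primitive collection and that subsets of a generating set again generate cones; both are immediate from the definition of primitive collection together with simpliciality of the fan. After that, the argument is pure bookkeeping over the partition $\rays(Z) = \X \sqcup \Y \sqcup \Z$, and the hypothesis that each collection has at least two rays simply guarantees that removing one ray from each still leaves a nonempty face from each part.
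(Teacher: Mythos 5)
The paper states Proposition~\ref{prop:3-maxcones} without proof (it follows the remark that ``the maximal cones of $Z$ are also easy to describe''), so there is no argument in the paper to compare yours against line by line. Your proof is correct and supplies the omitted details in the natural way: the characterization ``$S \subseteq \rays(Z)$ spans a cone of the fan if and only if $S$ contains no primitive collection,'' i.e.\ that primitive collections are exactly the minimal non-faces of the simplicial fan, is the right tool, both directions of your verification of it are sound (smoothness of $Z$ gives simpliciality, and faces of cones of a fan are cones of the fan), and the remaining bookkeeping over the partition $\rays(Z) = \X \sqcup \Y \sqcup \Z$ is as routine as you say. A small side remark: the ``at least two rays'' hypothesis is in fact automatic, since singletons always span cones, so no primitive collection can have fewer than two elements.

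One step does deserve to be made explicit, because it carries real weight: your second paragraph asserts that the primitive collections of $Z$ are \emph{precisely} $\X$, $\Y$, $\Z$, i.e.\ $\PC(Z) = \{\X, \Y, \Z\}$, whereas the literal hypothesis only says these three sets are primitive collections partitioning $\rays(Z)$. The distinction is not cosmetic. If $Z$ were allowed to have an additional ``mixed'' primitive collection $P$, then $P$ would contain none of $\X$, $\Y$, $\Z$ (a primitive collection cannot properly contain another), hence would omit at least one ray from each, hence would be contained in some $\rays(Z) \smallsetminus \{x_i, y_j, z_k\}$ --- and that set would then fail to span a cone, contradicting the conclusion. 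This actually happens for the hexagonal fan of the degree-$6$ del Pezzo surface: its six rays are partitioned by the three pairs of opposite rays, which are three disjoint primitive collections of size two, yet it has nine primitive collections in total and its maximal cones are not of the stated form. What rules this out here is the paper's context rather than the stated hypothesis: $Z$ is the iterated projective bundle of Picard rank $3$ constructed just before the proposition, so by \cite{Bat:91} (cf.\ Theorem~\ref{thm:rank3classification}, which forces three or five primitive collections, and five disjoint ones in pentagon configuration cannot partition the rays into three disjoint sets) the fan of $Z$ is a splitting fan with exactly these three primitive collections. Your proof is complete once this is cited; I would add a sentence doing so rather than folding it silently into ``our situation.''
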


We give the other two primitive relations in the following proposition.

\begin{proposition}\label{prop:3-primitiverelns}
    Assume $b_i \geq 0$ for all $1 \leq i \leq d -d'$. Besides $\sum_{i = 1}^{d - d' + 1}z_i = 0$, two other primitive relations of $Z$ are expressed as follows.
    \begin{enumerate}[label = (\alph*)]
        \item If $c_i \geq 0$ for all $1 \leq i\leq d-d'$, then we have
        \begin{equation}\label{eq:primitivereln1}
            \sum_{i = 1}^{d' - r + 1} x_i = \sum_{i = 1}^{d - d'}b_iz_i,
        \end{equation}
        and 
        \begin{equation}\label{eq:primitivereln2}
            \sum_{i = 1}^{r + 1} y_i = \sum_{i = 1}^{d' - r} a_ix_i + \sum_{i = 1}^{d - d'}c_iz_i.
        \end{equation}
        
        \item If there exists an index $1 \leq j \leq d-d'$ such that $c_j < 0$, then instead of the relation \eqref{eq:primitivereln2}, we have the following relation: letting $l$ be the index such that $c_l = \min\{c_i \mid 1 \leq i \leq d - d'\} < 0$,
        \begin{equation}\label{eq:primitivereln4}
            \sum_{i = 1}^{r + 1}y_i = \sum_{i = 1}^{d' - r}a_ix_i + (-c_l)z_{d-d'+1} + \sum_{i = 1}^{d - d'}(c_i - c_l)z_i.
        \end{equation}
    \end{enumerate}
\end{proposition}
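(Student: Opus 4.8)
The plan is to compute both relations from an explicit realization of the fan of $Z$, and then to repair signs in case (b) using the fiber relation $\sum_{i=1}^{d-d'+1} z_i = 0$. Set $n = d - d'$ and split $N_Z = \ZZ^{d'} \oplus \ZZ^{n}$, with the first factor carrying the fan of the Kleinschmidt base $X$ and $f_1, \dots, f_n$ denoting the standard basis of the fiber factor $\ZZ^{n}$. Applying the standard construction of the fan of a decomposable projective bundle (as in \cite[\S 7.3]{CLS:11}) to $Z = \PP_X(\cO \oplus \bigoplus_{j=1}^{n} \cO(D_j))$ with $D_j = b_j D_{x_{d'-r+1}} + c_j D_{y_{r+1}}$, I would take each horizontal ray of $Z$ to be the lift $(u,t)$ of a Kleinschmidt generator $u$ of $X$, whose fiber component $t \in \ZZ^{n}$ vanishes except that $t = \sum_{j} b_j f_j$ for the lift of $x_{d'-r+1}$ and $t = \sum_{j} c_j f_j$ for the lift of $y_{r+1}$; the fiber rays are $z_j = (0,f_j)$ for $1 \le j \le n$ together with $z_{n+1} = (0, -f_1 - \cdots - f_n)$. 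The twist components are exactly the coefficients of $D_{x_{d'-r+1}}$ and $D_{y_{r+1}}$ in the $D_j$, which is what pins down this convention; one checks that projecting away the fiber factor recovers the Kleinschmidt presentation of $X$, and the relation $\sum_{i=1}^{n+1} z_i = 0$ is immediate.

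Next I would simply add up the rays in each of the collections $\X$ and $\Y$. Projecting to the first factor, $\sum_{i=1}^{d'-r+1} x_i$ and $\sum_{i=1}^{r+1} y_i$ reduce to the base relations \eqref{eq:2primitivereln1} and \eqref{eq:2primitivereln2}, while the second factor receives a contribution only from the twisted generators $x_{d'-r+1}$ and $y_{r+1}$. Since $x_1, \dots, x_{d'-r}$ have vanishing fiber component, the term $\sum_{i=1}^{d'-r} a_i x_i$ contributes nothing in the second factor, and one obtains
\[
  \sum_{i=1}^{d'-r+1} x_i = \sum_{j=1}^{n} b_j z_j, \qquad \sum_{i=1}^{r+1} y_i = \sum_{i=1}^{d'-r} a_i x_i + \sum_{j=1}^{n} c_j z_j.
\]
Under the hypotheses of (a), namely $b_i, c_i \ge 0$, every coefficient on the right is nonnegative, which yields \eqref{eq:primitivereln1} and \eqref{eq:primitivereln2}. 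For (b) the same identity holds numerically, but when some $c_j < 0$ it is not yet a primitive relation, since a primitive relation must express the sum as a \emph{positive} combination of the generators of the cone containing it (Definition~\ref{defn:primitive}). To remedy this I would add the vanishing class $(-c_l)\sum_{i=1}^{n+1} z_i = 0$, where $c_l = \min_i c_i < 0$; because $c_i - c_l \ge 0$ and $-c_l > 0$, this turns the relation into \eqref{eq:primitivereln4} with all coefficients nonnegative.

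The step I expect to carry the real content is confirming that these nonnegative expressions are \emph{genuine} primitive relations, \ie that the rays appearing with positive coefficient span a cone of $Z$ and that the displayed vector lies in its relative interior. For this I would invoke Proposition~\ref{prop:3-maxcones}: a subset of $\rays(Z)$ spans a cone precisely when it omits at least one ray from each of $\X$, $\Y$, $\Z$. In \eqref{eq:primitivereln1} the support lies in $\{z_1,\dots,z_n\}$, omitting all of $\X$ and $\Y$ as well as $z_{n+1}$; in \eqref{eq:primitivereln2} the support omits $x_{d'-r+1}$, every $y_j$, and $z_{n+1}$; in \eqref{eq:primitivereln4} it omits $x_{d'-r+1}$, every $y_j$, and $z_l$ (whose coefficient $c_l - c_l$ vanishes, with $l \le n$ so that $z_l \ne z_{n+1}$). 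In each case the support is contained in a maximal cone, hence spans a face of $\Sigma_Z$, and the sum lies in the relative interior of that face, exactly as required. The remaining bookkeeping — matching indices and checking that the argument still goes through when several $c_i$ attain the minimum (so that more than one $z$ has vanishing coefficient) — is routine.
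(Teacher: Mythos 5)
Your proof is correct and follows essentially the same route as the paper's: it writes down the same explicit ray coordinates for $Z$, sums the rays in each primitive collection (adding the fiber relation $(-c_l)\sum_{i=1}^{d-d'+1} z_i = 0$ to repair signs in case (b)), and uses the maximal-cone description of Proposition~\ref{prop:3-maxcones} to confirm the right-hand sides are genuine primitive relations. The paper compresses all of this into ``the result easily follows''; you have simply filled in the details it leaves implicit, including the relative-interior check and the case of repeated minima among the $c_i$.
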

\begin{proof}
    Since the rays of $Z$ are given by
    \[\begin{array}{llll}
    x_{d' - r+1} & = (-1, \cdots, -1, & 0, \cdots, 0, & b_1,\cdots , b_{d-d'})   \\
    x_1 & = (1, \cdots, 0, & 0, \cdots, 0, & 0,\cdots, 0) \\
    & \vdots & \vdots & \vdots \\
    x_{d' - r} & = (0,\cdots, 1, & 0, \cdots, 0, & 0, \cdots, 0) \\
    y_1 & = (0, \cdots, 0, & 1, \cdots, 0, & 0, \cdots, 0) \\
    & \vdots & \vdots & \vdots \\
    y_{r} & = (0,\cdots, 0, & 0,\cdots, 1, & 0, \cdots, 0) \\
    y_{r + 1} & = (a_1,\cdots, a_{d'-r}, & -1, \cdots, -1, & c_1,\cdots, c_{d-d'}) \\
    z_1 & = (0,\cdots, 0, & 0, \cdots, 0, & 1, \cdots, 0) \\
    & \vdots & \vdots & \vdots \\
    z_{d-d'} & = (0,\cdots, 0, & 0, \cdots, 0, & 0, \cdots, 1) \\
    z_{d-d'+1} & = (0,\cdots, 0, & 0, \cdots, 0, & -1, \cdots, -1), \\
\end{array}\]
the result easily follows based on the description of the maximal cones (see Proposition \ref{prop:3-maxcones}).
\end{proof}

A basis for the Picard group $\Pic(Z)$ can be easily derived by the description of the rays listed in the proof of Proposition~\ref{prop:3-primitiverelns}.

\begin{corollary}\label{cor:3-picardbasis}
    The torus-invariant divisors $D_{x_{d' - r + 1}}$, $D_{y_{r + 1}}$ and $D_{z_{d - d' + 1}}$ generate the Picard group $\Pic(Z)$.
\end{corollary}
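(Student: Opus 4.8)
The plan is to invoke the standard presentation of the class group of a smooth complete toric variety and then read off the relations directly from the explicit coordinates of the rays. Since $Z$ is smooth and complete (indeed projective), we have $\Pic(Z) = \Cl(Z)$, and there is an exact sequence
\[
0 \to M \to \ZZ^{\rays(Z)} \xto{\pi} \Cl(Z) \to 0,
\]
where the first map sends a character $m$ to $\sum_{\rho} \langle m, u_\rho\rangle D_\rho$ and $\pi$ sends the standard basis vector indexed by a ray $\rho$ to the class $[D_\rho]$ (see \cite[Theorem 4.1.3]{CLS:11}). In particular the classes $[D_\rho]$, as $\rho$ ranges over all rays of $Z$, generate $\Cl(Z)$. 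So it suffices to express each $D_\rho$, modulo the image of $M$, in terms of the three distinguished divisors $D_{x_{d'-r+1}}$, $D_{y_{r+1}}$, and $D_{z_{d-d'+1}}$.

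First I would fix the dual basis $e_1^*,\dots,e_d^*$ of $M$ and split the $d$ coordinates into the three blocks of sizes $d'-r$, $r$, and $d-d'$ dictated by the ray description in the proof of Proposition~\ref{prop:3-primitiverelns}. Each $e_j^*$ produces one relation $\sum_\rho (u_\rho)_j\, D_\rho = 0$ in $\Cl(Z)$, where $(u_\rho)_j$ is the $j$-th coordinate of $u_\rho$. Reading these off block by block gives: for the first block, $D_{x_i} = D_{x_{d'-r+1}} - a_i D_{y_{r+1}}$ for $1 \le i \le d'-r$ (the only rays with a nonzero entry in the first block being $x_i$, $x_{d'-r+1}$, and $y_{r+1}$); for the second block, $D_{y_i} = D_{y_{r+1}}$ for $1 \le i \le r$; and for the third block, $D_{z_i} = D_{z_{d-d'+1}} - b_i D_{x_{d'-r+1}} - c_i D_{y_{r+1}}$ for $1 \le i \le d-d'$.

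These identities express every torus-invariant prime divisor as an integral combination of $D_{x_{d'-r+1}}$, $D_{y_{r+1}}$, and $D_{z_{d-d'+1}}$, so the latter three generate $\Cl(Z) = \Pic(Z)$. As a consistency check, $Z$ has $d+3$ rays and $\dim Z = d$, so the exact sequence shows $\Cl(Z)$ is free of rank $3$; hence the three generators are in fact a basis, matching the Picard rank.

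There is no serious obstacle here: the argument is a direct computation once the exact sequence is in place. The only point demanding care is the bookkeeping across the three coordinate blocks---in particular keeping track of which rays contribute to a given coordinate and getting the signs of the $a_i$, $b_i$, $c_i$ right---but this is exactly the same coordinate data already recorded in the proof of Proposition~\ref{prop:3-primitiverelns}.
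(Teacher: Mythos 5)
Your proof is correct and follows essentially the same route as the paper, which simply notes that the generators can be read off from the explicit ray coordinates in the proof of Proposition~\ref{prop:3-primitiverelns}; you have supplied the standard exact sequence $0 \to M \to \ZZ^{\rays(Z)} \to \Cl(Z) \to 0$ and the block-by-block relations that the paper leaves implicit, and your computations (including the signs on the $a_i$, $b_i$, $c_i$) are accurate.
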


Using Proposition~\ref{prop:3-primitiverelns} and taking degrees of primitive relations give upper bounds on the values of the $b_i$'s and $c_i$'s for which the projective bundle $Z$ is Fano or weak Fano. 
We can also prove bounds on the $b_i$'s and $c_i$'s in another way, as follows.
\begin{theorem}
    Let $X$ and $\E = \cO \oplus \cO(b_1,c_1) \oplus \cdots \oplus \cO(b_{d-d'},c_{d-d'})$ be defined as above with $b_i \geq 0$ for all $i$. 
    If the projective bundle $Z = \PP_X(\E)$ is Fano (\resp weak Fano), then
    \begin{enumerate}[label = (\alph*)]
        \item If all  $c_i \geq 0$ for $1 \leq i \leq d-d'$, then we have inequalities
        \begin{align}
            & (d' - r + 1) - \sum_{i = 1}^{d-d'}b_i > 0 (\text{\resp } \geq 0), \label{eq:3bound1} \\
            & (r + 1) - \sum_{i = 1}^{d' - r}a_i - \sum_{i = 1}^{d - d'}c_i > 0 (\text{\resp } \geq 0); \label{eq:3bound2}
        \end{align}
        
        \item If there exists $c_j < 0$, let $l$ be the index such that $c_l = \min\{c_i \mid 1 \leq i \leq d - d'\}$, replace the inequality \eqref{eq:3bound2} by
        \begin{equation*}
            (r + 1) - \sum_{i = 1}^{d' - r}a_i + c_l - \sum_{i = 1}^{d - d'}(c_i - c_l) > 0 (\text{\resp } \geq 0).
        \end{equation*}
    \end{enumerate}
\end{theorem}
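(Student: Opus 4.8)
The plan is to apply the Fano/weak Fano criterion of Corollary~\ref{cor:wFanocriterion} directly to the three primitive collections of $Z$ and read off their degrees from the primitive relations computed in Proposition~\ref{prop:3-primitiverelns}. Since the construction in Section~\ref{subsec:projectiveBundle} shows that $Z$ has exactly the three primitive collections $\X$, $\Y$, $\Z$, Corollary~\ref{cor:wFanocriterion} gives that $Z$ is Fano (\resp weak Fano) if and only if $\deg P > 0$ (\resp $\geq 0$) for each of these three collections. In particular, if $Z$ is Fano (weak Fano) then each of $\deg \X$, $\deg \Y$, $\deg \Z$ is $> 0$ ($\geq 0$), and it remains only to compute these three degrees.

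I would compute each degree via the formula $\deg P = n - (a_1 + \cdots + a_m)$ of Definition~\ref{defn:primitive}, where $n = \abs{P}$ and the $a_i$ are the coefficients on the right-hand side of the primitive relation of $P$. The collection $\Z$ has relation $\sum_{i=1}^{d-d'+1} z_i = 0$, so $\deg \Z = d - d' + 1 > 0$ automatically (as $d > d'$) and imposes no constraint. For $\X$, relation \eqref{eq:primitivereln1} gives $\deg \X = (d'-r+1) - \sum_{i=1}^{d-d'} b_i$, which is precisely the left-hand side of \eqref{eq:3bound1}; this relation is valid in both cases, since $b_i \geq 0$ is assumed throughout and Proposition~\ref{prop:3-primitiverelns}(b) modifies only the $\Y$-relation. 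In case (a), relation \eqref{eq:primitivereln2} yields $\deg \Y = (r+1) - \sum_{i=1}^{d'-r} a_i - \sum_{i=1}^{d-d'} c_i$, the left-hand side of \eqref{eq:3bound2}. In case (b), substituting relation \eqref{eq:primitivereln4} and summing its right-hand coefficients gives $\deg \Y = (r+1) - \sum_{i=1}^{d'-r} a_i + c_l - \sum_{i=1}^{d-d'}(c_i - c_l)$, the stated replacement inequality. The required implication then follows by demanding positivity (\resp nonnegativity) of these degrees.

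There is no substantive obstacle: the statement is a direct consequence of the Fano criterion together with the explicit primitive relations already established. The only point requiring care is the sign bookkeeping in case (b), where the coefficient $-c_l$ of $z_{d-d'+1}$ is positive (since $c_l < 0$) and is subtracted in the degree formula, producing the $+c_l$ term in the final inequality. I would also double-check that relation \eqref{eq:primitivereln1} remains a genuine primitive relation in both cases, so that $\deg \X$ continues to supply the unchanged bound \eqref{eq:3bound1}.
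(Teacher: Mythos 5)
Your proposal is correct, but it is not the argument the paper actually writes out. The paper mentions your route in one sentence just before the theorem (``taking degrees of primitive relations give upper bounds\dots'') and then deliberately proves the bounds ``in another way'': it computes $K_Z = p^*(K_X \otimes \det \E) \otimes \cO_Z(-d+d'-1)$ from the relative Euler sequence, intersects $-K_Z$ with torus-invariant curves via the projection formula, and handles the case of a negative $c_l$ by twisting, writing $\cO_{\PP\E}(1) = \cO_{\PP\E'}(1) \otimes p^*\L$ with $\L = \cO_X(0,c_l)$ and $\E'$ chosen so that $\cO_{\PP\E'}(1)$ is nef by Proposition~\ref{prop:positivity}. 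Your argument instead reads the three degrees $\deg\X$, $\deg\Y$, $\deg\Z$ directly off Proposition~\ref{prop:3-primitiverelns} and invokes Corollary~\ref{cor:wFanocriterion}; your bookkeeping (including the sign of the $-c_l$ coefficient in case (b), and the observation that relation \eqref{eq:primitivereln1} is unaffected in that case) is accurate. The trade-off: your combinatorial route is shorter and in fact delivers more than the stated theorem, namely the full biconditional, since Corollary~\ref{cor:wFanocriterion} is an equivalence and $\deg\Z = d-d'+1 > 0$ is automatic, so the two displayed inequalities characterize exactly when $Z$ is Fano (\resp weak Fano). The paper's geometric route, by contrast, only uses the primitive relations of the base $X$ rather than of the total space $Z$, and illustrates a Szurek--Wi\'sniewski-style method that makes sense for projectivized bundles beyond the toric decomposable setting; that is presumably why the authors included it as a second, independent derivation.
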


\begin{proof}
    Let $p : Z = \PP_X(\E) \to X$ be the projection. 
    Taking determinant of the relative Euler sequence,
    \[0 \to \Omega^1_{Z/X} \to p^*\E(-1) \to \cO_{Z} \to 0\]
    we have
    \[\det \Omega_{Z/X}^1 = p^*\det(\E) \otimes \cO_Z(-d + d' - 1).\]
    Thus the cotangent bundle sequence gives
    \[K_Z = p^*\det\Omega_X^1 \otimes \det \Omega^1_{Z/X} = p^*(K_X \otimes \det \E) \otimes \cO_Z(-d + d' - 1).\]
    Note that every torus-invariant curve on a proper toric variety is isomorphic to $\PP^1$. Let $C$ be a torus-invariant curve on $Z$, then by projection formula
    \[-K_Z \cdot C = -K_X \cdot p_*C + \det \E^\vee \cdot p_*C + \cO_Z(d - d' + 1) \cdot C.\]
    We have the following cases to consider
    \begin{enumerate}[label = (\roman*)]
        \item If $p_*C$ is a point in $X$ then $-K_Y \cdot C = d - d' + 1 > 0$ since $C$ is contained in a fiber of $p$.
        \item If $p_*C$ is a line in $X$, note that
        \begin{align*}
            & -K_X = D_{x_{d'-r+1}} + D_{x_1} + \cdots + D_{x_{d'-r}} + D_{y_1} + \cdots + D_{y_{r + 1}}, \\
            & \E = \cO \oplus \cO(b_1D_{x_{d'-r+1}} + c_1D_{y_{r+1}}) \oplus \cdots \oplus \cO(b_{d-d'}D_{x_{d'-r+1}} + c_{d-d'}D_{y_{r+1}}),
        \end{align*}
        then if $p_*C$ is the torus-invariant curve defined by the primitive relation \eqref{eq:2primitivereln1}, we have
        \[-K_X \cdot p_*C + \det \E^\vee \cdot p_*C = (d' - r + 1) - \sum_{i = 1}^{d - d'}b_i;\]
        if $p_*C$ is defined by the primitive relation \eqref{eq:2primitivereln2}, we have
        \[-K_X \cdot p_*C + \det \E^\vee \cdot p_*C = (r + 1) - \sum_{i = 1}^{d' - r}a_i - \sum_{i = 1}^{d - d'}c_i.\]
        Moreover, the intersection $\cO_Z(d - d' + 1) \cdot C$ can be negative, for instance when $C$ is contained in the Cartier divisor determined by the line bundle $\cO_Z(d - d' + 1)$. Now assume there is a $c_j$ such that $c_j < 0$, and let $c_l = \min\{c_i \mid 1 \leq i \leq d -d'\} < 0$. Consider the line bundle $\L = \cO_X(0, c_l)$ on $X$ and denote by
        \[\E' = \cO_X(0,-c_l) \oplus \cO_X(b_1,c_1 - c_l) \oplus \cdots \oplus \cO_X(b_{d - d'},c_{d-d'} - c_l).\]
        Since 
        \[\cO_{\PP \E} (1) = \cO_{\PP(\E' \otimes \L)}(1) = \cO_{\PP\E'}(1) \otimes p^*\L\]
        and $\cO_{\PP\E'}(1)$ is nef on $Z$ by Proposition \ref{prop:positivity} as $b_i \geq 0$ and $c_i - c_l \geq 0$ for all $1 \leq i \leq d - d'$, $\cO_{\PP\E'}(1)\cdot C \geq 0$. 
        Moreover, by projective formula we have $p^*\L \cdot C = (c_lD_{y_{r + 1}}) \cdot p_*C$, so $p^*\L \cdot C = 0$ if $p_*C$ is given by the primitive relation~\eqref{eq:2primitivereln1}, and $p^*\L \cdot C = c_l$ if $p_*C$ is given by relation~\eqref{eq:2primitivereln2}.
    \end{enumerate}
    This proves the proposition.
\end{proof}

In the code repository associated to this paper \cite{weakFano-repo}, we construct all possible input triples $(d, a, l)$ where $d$ is the dimension of the variety, $a$ is the list $a_1, \cdots, a_{d'-r}$, and $l$ is a list of all pairs $\{b_1,c_1\}, \cdots, \{b_{d-d'}, c_{d-d'}\}$ such that the associated variety is weak Fano. 
The function \texttt{projectiveBundleConstructor} takes such a triple as input and constructs the associated variety as a \texttt{NormalToricVariety} object in Macaulay2. 

Since the same variety can be constructed in multiple ways as a projective bundle, we also write a function called \texttt{areIsomorphic} which takes as input two \texttt{NormalToricVariety} objects and returns true if and only if they are isomorphic. 
Recall that a toric morphism (\cite[Theorem 3.3.4]{CLS:11}) between two normal toric varieties $X(\Sigma)$ and $X(\Sigma')$ can be given by a homomorphism of lattices that maps every cone in $\Sigma$ into a cone in $\Sigma'$. 
A toric morphism is an isomorphism if it is an isomorphism of abelian groups and maps cones onto cones isomorphically, defining a bijection between two sets of cones. 
The Macaulay2 function \texttt{areIsomorphic} checks if a toric isomorphism can exist between two \texttt{NormalToricVariety} objects by checking if there is an invertible linear map defined over $\ZZ$ that maps rays to rays and cones to cones bijectively. Since (projective) toric varieties are isomorphic if and only if there is a \textit{toric} isomorphism between them \cite[Theorem 4.1]{Ber:03}, \texttt{areIsomorphic} returns true precisely when two projective toric varieties are isomorphic.

By constructing all possible projective bundles (in a fixed dimension) and removing redundant isomorphic copies from the list, we obtain a full classification of weak Fano toric varieties of Picard rank $3$ with three primitive collections.

\begin{theorem}
    There are $18$ isomorphism classes of rank $3$ weak Fano toric threefolds with three primitive collections, as described in Appendix~\ref{app:3Picard3Fold}. 
    There are $81$ isomorphism classes of rank $3$ weak Fano toric fourfolds with three primitive collections, as described in Appendix~\ref{app:3Picard4Fold}.
\end{theorem}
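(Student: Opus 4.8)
The plan is to convert the classification into a finite, explicit enumeration and then quotient by isomorphism. By Theorem~\ref{thm:rank3classification} together with Proposition~\ref{prop:baseWeakFano}, every weak Fano toric variety of Picard rank $3$ with three primitive collections is isomorphic to a bundle $Z = \PP_X(\cO \oplus \cO(b_1,c_1) \oplus \cdots \oplus \cO(b_{d-d'},c_{d-d'}))$ over a weak Fano Kleinschmidt variety $X = \PP_{\PP^r}(\cO \oplus \cO(a_1) \oplus \cdots \oplus \cO(a_{d'-r}))$, so it is encoded by a triple $(d,a,l)$ as in the setup of Section~\ref{subsec:projectiveBundle}; after tensoring by a line bundle and reordering summands we may take $b_i \geq 0$ and the pairs lexicographically ordered without changing $Z$. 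First I would fix $d \in \{3,4\}$ and note that the discrete data $(r,d')$ ranges over the finite set $\{(r,d') : 1 \leq r < d' < d\}$.

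Second, I would show the parameters $a_i, b_i, c_i$ lie in an explicit finite range. By Corollary~\ref{cor:wFanocriterion} and the primitive relations of Proposition~\ref{prop:3-primitiverelns}, $Z$ is weak Fano if and only if the degrees of its three primitive relations are all nonnegative: the relation $\sum z_i = 0$ has degree $d - d' + 1 > 0$ automatically, while \eqref{eq:primitivereln1} and \eqref{eq:primitivereln2} yield precisely the inequalities \eqref{eq:3bound1} and \eqref{eq:3bound2} (and their analogue when some $c_j < 0$). Since all $b_i \geq 0$, \eqref{eq:3bound1} forces $\sum b_i \leq d'-r+1$; since $X$ is weak Fano we have $\sum a_i \leq r+1$ by the Kleinschmidt criterion, and then \eqref{eq:3bound2} bounds $\sum c_i$ from above when all $c_i \geq 0$. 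In the remaining case the modified inequality bounds both the spread $\sum_i (c_i - c_l)$ and the size $|c_l|$ of the most negative coefficient, hence every $c_i$. Thus for each $d$ only finitely many triples $(d,a,l)$ are weak Fano, and they can be listed directly.

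Third, I would construct these finitely many candidates and remove redundancy. Here lies the subtlety: the same abstract variety admits many presentations as an iterated bundle (for $d=4$ the three choices of $(r,d')$ correspond to permuting the three primitive collections), so the raw list overcounts isomorphism classes. I would apply \texttt{projectiveBundleConstructor} to realize each triple as a \texttt{NormalToricVariety}, then use \texttt{areIsomorphic} to partition the list into isomorphism classes; its correctness rests on the fact that two projective toric varieties are isomorphic precisely when there is a lattice automorphism carrying one fan onto the other \cite[Theorem 4.1]{Ber:03}. Counting the resulting classes yields $18$ for $d = 3$ and $81$ for $d = 4$, as tabulated in Appendices~\ref{app:3Picard3Fold} and~\ref{app:3Picard4Fold}.

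The main obstacle is not any single inequality but guaranteeing that the enumeration is simultaneously \emph{exhaustive} and \emph{non-redundant}. Exhaustiveness requires that the weak Fano bounds above genuinely trap all parameters inside the listed finite box, so that no example escapes; non-redundancy requires that \texttt{areIsomorphic} detect every coincidence among the many bundle presentations. Both are discharged computationally, and the delicate point is the completeness of the isomorphism reduction: an incomplete search for lattice automorphisms would inflate the final counts, so the decisive step is verifying that this search is exhaustive.
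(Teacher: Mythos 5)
Your proposal is correct and takes essentially the same route as the paper: reduce to the finite list of bundle data $(d,a,l)$ trapped by the weak Fano degree inequalities (the paper obtains these bounds both by taking degrees of the primitive relations of Proposition~\ref{prop:3-primitiverelns} and by a separate relative Euler sequence argument, but the degree route you use suffices), then construct all candidates with \texttt{projectiveBundleConstructor} and quotient by isomorphism using \texttt{areIsomorphic}, whose completeness is justified by \cite[Theorem 4.1]{Ber:03} exactly as you state. You also correctly identify the crux as exhaustiveness of the enumeration plus completeness of the toric-isomorphism check, which is precisely what the paper's computational verification discharges.
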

\begin{remark}
The Chern numbers $c_1^4$ and $c_1^2c_2$ of toric Fano fourfold D8 given in Batyrev's list of Fano fourfolds \cite{Bat:99} are incorrectly computed. 
See entry 4.PB-15 in Appendix~\ref{app:3Picard4Fold} for the correct invariants. 
\end{remark}

\begin{remark}
	Sato in \cite{Sat:02} classifies \textit{weakened} toric threefolds, that is, weak Fano toric threefolds that are not Fano but can be deformed to Fano threefolds under a small deformation. 
    There are two weakened toric threefolds of Picard rank $3$: $\PP^1 \times \FF_2$ and a variety denoted by $X_3^0$, which has anticanonical degree $52$.
    Since Chern numbers are deformation invariants, we see the latter variety is isomorphic to 3.PB-17 in Appendix~\ref{app:3Picard3Fold}. 
    $\PP^1 \times \FF_2$ deforms to the Fano threefold $\PP^1 \times \PP^1 \times \PP^1$ and $X_3^0$ deforms to Fano threefold 3-31.
    
    Sato in \cite{Sat:21} studies \textit{special weak} toric varieties and classifies them in dimension four (in dimension three, special weak toric varieties are exactly the weakened toric varieties). 
    Two of the varieties in the list, denoted by $\Z_1$ and $\Z_{14}$, have Picard rank $3$. By inspection of the primitive relations, we can see that $\Z_1$ is isomorphic to 4.PB-42 in Appendix~\ref{app:3Picard4Fold} and $\Z_{14}$ is isomorphic to 4.PB-60. 
\end{remark}

\section{Batyrev's construction}\label{sec:batyrevconstruction}

Let's briefly recall Batyrev's construction of smooth proper toric varieties of Picard rank $3$ with five primitive collections \cite[Theorem 6.6]{Bat:91}.

Fix the dimension $d$. Let $(p_0,p_1,p_2,p_3,p_4)$ be a positive partition of $d + 3$, which is the number of rays. 
Furthermore, choose nonnegative integers $b_1, \cdots, b_{p_3}$ and $c_2, \cdots, c_{p_2}$ (note that the indices for $c_i$'s start from $2$). 
Batyrev in \cite{Bat:91} proves that we can associate to this data a toric variety as follows. 
For each $i \in \ZZ/5\ZZ$ we can find a collection of vectors $X_i$ with $|X_i| = p_i$, such that these vectors are primitive generators of the rays of a regular and complete fan.
The five sets $P_i = X_i \cup X_{i+1}$ are the primitive collections of the fan, and we have the primitive relations
\begin{equation}\label{eq:5primitivereln}
\begin{split}
    & (v_1 + \cdots + v_{p_0}) + (y_1 + \cdots + y_{p_1}) = (c_2z_2 + \cdots + c_{p_2}z_{p_2}) + ((b_1 + 1)t_1 + \cdots + (b_{p_3} + 1)t_{p_3}), \\
    & (y_1 + \cdots + y_{p_1}) + (z_1 + \cdots + z_{p_2}) = (u_1 + \cdots + u_{p_4}), \\
    & (z_1 + \cdots + z_{p_2}) + (t_1 + \cdots + t_{p_3}) = 0, \\
    & (t_1 + \cdots + t_{p_3}) + (u_1 + \cdots + u_{p_4}) = (y_1 + \cdots + y_{p_1}), \\
    & (u_1 + \cdots + u_{p_4}) + (v_1 + \cdots + v_{p_0}) = (c_2z_2 + \cdots + c_{p_2}z_{p_2}) + (b_1t_1 + \cdots + b_{p_3}t_{p_3}),
\end{split}
\end{equation}
where \[X_0 = \{v_i\}_{i = 1}^{p_0},\quad X_1 = \{y_i\}_{i = 1}^{p_1},\quad X_2 = \{z_i\}_{i = 1}^{p_2},\quad X_3 = \{t_i\}_{i = 1}^{p_3},\quad X_4 = \{u_i\}_{i = 1}^{p_4}.\]
Then the toric variety associated to this fan is smooth and proper of Picard rank $3$ with five primitive collections, and is uniquely determined by the $p_i$, $b_i$ and $c_i$ up to isomorphism \cite[Theorem 6.6]{Bat:91}. 
Let's denote this variety, which is well-defined up to isomorphism, as 
\[\Bat(\{p_i\}, \{b_i\}, \{c_i\}) \coloneqq \Bat(\{p_0, p_1, p_2, p_3, p_4\}, \{b_1, \cdots, b_{p_3}\}, \{c_2, \cdots, c_{p_2}\}).\]
We will call varieties that can be constructed in this way \textit{Batyrev varieties}. More explicitly, one can construct $\Bat(\{p_i\}, \{b_i\}, \{c_i\})$ by labelling the standard basis of $\ZZ^d$ by 
\[ \{ v_1, \dots v_{p_0}, y_2, \dots y_{p_1}, z_2, \dots z_{p_2}, t_1, \dots t_{p_3}, u_2, \dots u_{p_4} \}\]
and then setting
\begin{align*}
    & z_1 = -z_2 - \cdots -z_{p_2} -t_1 - \cdots - t_{p_3}, \\
    & y_1 = -y_2 - \dots - y_{p_1} - z_1 - \cdots - z_{p_2} + u_1 + \cdots + u_{p_4}, \\
    & u_1=-u_2- \cdots -u_{p_4}-v_1- \cdots -v_{p_0} + c_2z_2 + \cdots + c_{p_2}z_{p_2} + b_1t_1 + \cdots +b_{p_3}t_{p_3}.
\end{align*}
Then these vectors satisfy all the primitive relations and thus generate the rays that define the fan of the Batyrev variety.

Evidently, any rearrangement of the $b_i$'s and $c_i$'s produces the same toric variety up to isomorphism, so let's assume $b_1, \cdots, b_{p_3}$ and $c_2, \cdots, c_{p_2}$ are \textit{nondecreasing} sequences.

It is stated in \cite[Theorem 6.6]{Bat:91} that the data $(\{p_i\}, \{b_i\}, \{c_i\})$ is unique for a given isomorphism type of a Batyrev variety, up to a ``symmetry of the pentagon''. 
To explain this remark, consider a pentagon whose vertices correspond to the five sets $X_0, \cdots, X_4$, so that the edges correspond to the primitive collections $P_0, \cdots, P_4$. Let $D_5$ be the dihedral group of order $10$, \ie the group of symmetries of the pentagon, so it acts on both the sets $\{X_0, \cdots, X_4\}$ and $\{P_0, \cdots, P_4\}$.  


\begin{center}
\begin{tikzpicture}[scale=3, every node/.style={font=\small}]
  \def\radius{0.7}
  \def\labelradius{0.8}
  \def\labeledge{0.5}

  \foreach \i in {0,...,4} {
    \coordinate (X\i) at ({90 - \i*72}:\radius); 
    \node at ({90 - \i*72}:\labelradius) {$X_{\i}$}; 
  }

  \foreach \i in {0,...,4} {
    \pgfmathtruncatemacro{\j}{mod(\i+1,5)}
	\draw (X\i) -- (X\j) node[] {};
        \node at ({55 - \i*72}:\labeledge) {$P_{\i}$};
    }
        
 
\end{tikzpicture}
\end{center}

Furthermore, let $\Bat(\{p_i\}, \{b_i\}, \{c_i\})$ and $ \Bat(\{p_i'\}, \{b_i'\}, \{c_i'\})$ be two Batyrev varieties and suppose $\tau \in D_5$. Then if $p'_i = p_{\tau(i)}$ for all $i \in \ZZ/5\ZZ$ we can define a map from the set the rays of the fan defining $\Bat(\{p_i\}, \{b_i\}, \{c_i\})$ to the rays of $ \Bat(\{p_i'\}, \{b_i'\}, \{c_i'\})$, by sending the $j$th element of $X_{\tau(i)}$ to the $j$th element of $X'_{i}$. This defines an isomorphism of toric varieties (with the inverse coming from $\tau^{-1}$) if and only if $\tau$ preserves all primitive relations. 
We show this can only occur if all $b_i$'s and $c_i$'s (and $b'_i$'s and $c'_i$'s) are zero and $\tau$ is the reflection about $X_1$, \ie reflecting along the line connecting the center of the pentagon and the vertex denoted by $X_1$.

\begin{proposition}
Let $(\{p_i\}, \{b_i\}, \{c_i\})$, $(\{p_i'\}, \{b_i'\}, \{c_i'\})$ be two distinct triples of lists of nonnegative integers as above, so that they define Batyrev varieties.
Then there exists an isomorphism $\Bat(\{p_i\}, \{b_i\}, \{c_i\}) \cong \Bat(\{p_i'\}, \{b_i'\}, \{c_i'\})$ if and only if all $b_i$'s, $c_i$'s, $b'_i$'s, and $c'_i$'s are zero, $p_0 \neq p_2$ or $p_3 \neq p_4$, and $(p'_0, p'_1, p'_2, p'_3, p'_4) = (p_2,p_1,p_0,p_4,p_3)$.
\end{proposition}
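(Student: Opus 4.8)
The plan is to invoke Batyrev's uniqueness statement to turn the isomorphism question into a finite check over the dihedral group $D_5$, and then to isolate the single admissible symmetry using one isomorphism-invariant of the primitive relations together with the sign constraint $b_i, c_i \geq 0$.

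First I would record the reduction. By \cite[Theorem 6.6]{Bat:91}, recalled above, an isomorphism $\Bat(\{p_i\},\{b_i\},\{c_i\}) \cong \Bat(\{p_i'\},\{b_i'\},\{c_i'\})$ exists if and only if there is a $\tau \in D_5$ with $p_i' = p_{\tau(i)}$ for all $i$ whose induced ray map $\phi$ preserves all primitive relations. Writing $V,Y,Z,T,U$ for the sums of the rays in $X_0,\dots,X_4$ and using $\sum (b_i+1)t_i = B+T$ with $B = \sum b_i t_i$, $C = \sum c_i z_i$, the relations \eqref{eq:5primitivereln} read $V+Y=C+B+T$, $Y+Z=U$, $Z+T=0$, $T+U=Y$, $U+V=C+B$. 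Since $\tau=\mathrm{id}$ forces $p_i'=p_i$ and then, by matching relations, $b_i'=b_i$ and $c_i'=c_i$, producing the same triple, the distinctness hypothesis lets me assume $\tau \neq \mathrm{id}$.

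The key invariant is that vanishing of the collection-sum $\sum_{x\in P} x$ is preserved by any fan isomorphism. Inspecting the relations, $P_2$ always has vanishing sum, $P_4$ has vanishing sum precisely when all $b_i=c_i=0$, and $P_0,P_1,P_3$ never do, since $C+B+T$, $U$, and $Y$ are nonzero as all $p_i \geq 1$. Hence the number of primitive collections with vanishing sum is $2$ if all parameters vanish and $1$ otherwise; this number, and the set of edges realizing it, must be respected by the edge-permutation $\sigma$ induced by $\tau$. Matching the invariant across the isomorphism also excludes the ``mixed'' possibility that one variety has all parameters zero and the other does not. A short computation in $D_5$ then shows: if the distinguished edge set is $\{P_2,P_4\}$ then $\sigma$ fixes it only for $\tau\in\{\mathrm{id},s_1\}$, where $s_1$ is the reflection about $X_1$ (acting by $X_0\leftrightarrow X_2$, $X_3\leftrightarrow X_4$); if it is $\{P_2\}$ then $\sigma(2)=2$ forces $\tau\in\{\mathrm{id},s_0\}$, where $s_0$ is the reflection about $X_0$ (acting by $X_1\leftrightarrow X_4$, $X_2\leftrightarrow X_3$).

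This leaves exactly two cases. In the all-parameters-nonzero case $\tau=s_0$, I would apply $\phi$ to the relation for $P_0$ and compare with the target relation for $P_4$: here $\phi$ sends $V\to V'$, $Y\to U'$, $C\to\sum_{i\geq2}c_i t_i'$, $B\to\sum_i b_i z_i'$, $T\to Z'$, so the identity becomes $V'+U'=\sum_{i\geq2}c_i t_i' + \sum_i b_i z_i' + Z'$, which must agree with $V'+U'=C'+B'$. Substituting the dependent ray $z_1' = -\sum_{j\geq2}z_j' - \sum_j t_j'$ (equivalently $Z'=-T'$) and reading off the coefficient of $t_1'$ yields $b_1' = -b_1-1 < 0$, contradicting $b_1'\geq 0$. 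This eliminates $s_0$, hence the entire nonzero case, forcing all $b_i,c_i,b_i',c_i'$ to vanish and $\tau=s_1$, whence $p_i' = p_{s_1(i)}$ gives $(p_0',\dots,p_4')=(p_2,p_1,p_0,p_4,p_3)$, and distinctness yields $p_0\neq p_2$ or $p_3\neq p_4$. For the converse I would check directly that, with all parameters zero, $s_1$ carries the simplified relations $V+Y=T$, $Y+Z=U$, $Z+T=0$, $T+U=Y$, $U+V=0$ to one another (realizing $P_0\leftrightarrow P_1$, $P_2\leftrightarrow P_4$, $P_3$ fixed), giving an isomorphism onto $\Bat((p_2,p_1,p_0,p_4,p_3),0,0)$, which is a genuinely different triple exactly when $p_0\neq p_2$ or $p_3\neq p_4$. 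I expect the $\tau=s_0$ elimination to be the main obstacle: one must correctly distribute the parameter-bearing right-hand sides of $P_0$ and $P_4$ through the single dependent ray in each block, and it is precisely the sign forced by nonnegativity there that breaks the would-be symmetry.
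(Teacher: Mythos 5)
Your proof is correct, and it rests on the same foundation as the paper's: the reduction, via the uniqueness clause of Batyrev's Theorem 6.6, to pentagon symmetries $\tau \in D_5$ whose induced ray map preserves primitive relations, together with the observation that the collections with vanishing sum are exactly $P_2$ (always) and $P_4$ (precisely when all parameters vanish). Where you genuinely diverge is in how the nonzero-parameter case is killed. The paper argues that if some $b_i \neq 0$ (and then, separately, if some $c_i \neq 0$), the relations of $P_0$ and $P_4$ are intrinsically distinguished from each other and from all other relations --- the same $t_i$'s on the right with coefficients differing by one --- so $\tau$ would have to fix two distinct edges of the pentagon, which is impossible for nontrivial $\tau$. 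You instead push the vanishing-sum invariant further: it alone narrows the candidates to $\{\mathrm{id}, s_0\}$ when not all parameters vanish, and you then eliminate the reflection $s_0$ by transporting the $P_0$-relation through $\phi$, substituting the dependent ray $z'_1$, and reading off $b'_1 = -b_1 - 1 < 0$ against nonnegativity; I verified this computation (with $Z' = -T'$ and $z'_1 = -\sum_{j \geq 2} z'_j - \sum_j t'_j$, the coefficient of $t'_1$ in $\phi(C+B+T)$ is indeed $-(b_1+1)$, while the target relation demands $b'_1 \geq 0$). Both arguments are sound, but they buy different things. The paper's is shorter and coordinate-free; however, its claim that the coefficient structure forces $\tau$ to \emph{fix} rather than \emph{swap} $P_0$ and $P_4$ is left implicit --- under a swap the $t_i$'s get relabeled as $z'_i$'s, and ruling that out is essentially the dependent-ray computation you perform explicitly. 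Your version treats the $b_i$'s and $c_i$'s uniformly in a single stroke rather than in two separate cases, makes explicit exactly where nonnegativity of the parameters enters, and is mechanically checkable; the price is the coordinate manipulation with the dependent rays, which is precisely the step you correctly identified as the crux.
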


\begin{proof}
First assume all $b_i$'s and $c_i$'s are zero. 
One can easily see that the symmetry of the pentagon with vertices $X_0, \cdots, X_4$ which reflects about $X_1$, thus interchanging $X_0$ with $X_2$ and $X_3$ with $X_4$, preserves primitive relations and thus gives an isomorphism of toric varieties.
Conversely, assume we have an isomorphism $\Bat(\{p_i\}, \{b_i\}, \{c_i\}) \cong \Bat(\{p_i'\}, \{b_i'\}, \{c_i'\})$. 
There must be a nontrivial symmetry of the pentagon $\tau \in D_5$ such that $p_{\tau(i)} = p'_i$ that preserves primitive relations. 
If any $b_i$ is nonzero, then looking at the primitive relations~\eqref{eq:5primitivereln} we see the first and last primitive relations have on the right side the same vectors $t_i$ but the coefficients in the first primitive relation are $1$ greater than those in the last primitive relation. 
This characterizes the first and last primitive relations: thus $\tau$ must fix the first and last primitive collections,  but $\tau(P_0) = P_0$ and $\tau(P_4) = P_4$ is impossible if $\tau$ is nontrivial. 
We have a contradiction: thus all the $b_i$ are zero. Next, if any $c_i$ are nonzero, then we see that first and last primitive relations have $z_i$'s on the right, and the first also has $t_i$'s. Thus $P_0$ and $P_4$ must be fixed by $\tau$, which is impossible: thus all $c_i$ are zero. 
When all the $b_i$ and $c_i$ are zero, we see the only possible symmetry is one which interchanges $P_2$ and $P_4$, since these are the primitive collections with sum zero. 
Thus $\tau$ must be the element of $D_5$ which is reflection about $X_1$: this symmetry interchanges $X_0$ with $X_2$ and $X_3$ with $X_4$. 
So all $b_i'$'s and $c_i'$'s are also zero.
Since the triples $(\{p_i\}, \{b_i\}, \{c_i\})$, $(\{p_i'\}, \{b_i'\}, \{c_i'\})$ are distinct, either $p_0 \neq p_2$ or $p_3 \neq p_4$.
\end{proof}

Since we have the primitive relations described explicitly in terms of the input data, by computing the degrees of the primitive relations it is easy to check if a toric variety is (weak) Fano.

\begin{corollary}
    The Batyrev variety constructed above is Fano if and only if the following inequalities are satisfied. 
    \[\begin{cases}
    & p_1 + p_2 - p_4 > 0, \\
    & p_3 + p_4 - p_1 > 0, \\
    & p_0 + p_1 - p_3 > \sum_{i = 2}^{p_2} c_i + \sum_{i = 1}^{p_3} b_i, \\
    & p_0 + p_4 > \sum_{i = 2}^{p_2} c_i + \sum_{i = 1}^{p_3} b_i.
    \end{cases}\]
    It is weak Fano if and only if all strict inequalities $>$ are replaced by $\geq$.
\end{corollary}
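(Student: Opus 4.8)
The plan is to reduce everything to Corollary~\ref{cor:wFanocriterion}, which states that $X$ is Fano (resp.\ weak Fano) exactly when $\deg P > 0$ (resp.\ $\geq 0$) for every primitive collection $P \in \PC(X)$. By Batyrev's construction the variety has precisely the five primitive collections $P_0, \dots, P_4$ whose primitive relations are recorded in \eqref{eq:5primitivereln}. Since each relation is already written with the summed primitive collection on the left-hand side and a nonnegative integer combination of rays on the right, I can read off each degree directly from Definition~\ref{defn:primitive} as $\abs{P_i}$ minus the sum of the right-hand coefficients. The corollary then follows once we match the resulting five inequalities against the four displayed in the statement.

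Carrying out the computation: for $P_0 = X_0 \cup X_1$ the left side has $p_0 + p_1$ terms and the right side has coefficient sum $\sum_{i=2}^{p_2} c_i + \sum_{i=1}^{p_3}(b_i+1) = \sum_{i=2}^{p_2} c_i + \sum_{i=1}^{p_3} b_i + p_3$, so $\deg P_0 = p_0 + p_1 - p_3 - \sum_{i=2}^{p_2} c_i - \sum_{i=1}^{p_3} b_i$. Likewise $\deg P_1 = (p_1 + p_2) - p_4$, $\deg P_3 = (p_3 + p_4) - p_1$, and $\deg P_4 = (p_0 + p_4) - \sum_{i=2}^{p_2} c_i - \sum_{i=1}^{p_3} b_i$; requiring these to be positive (resp.\ nonnegative) reproduces exactly the four inequalities in the statement. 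The remaining relation deserves a one-line remark: $P_2 = X_2 \cup X_3$ carries the zero relation $\sum z_i + \sum t_i = 0$, whence $\deg P_2 = p_2 + p_3$. Because $(p_0, \dots, p_4)$ is a \emph{positive} partition we have $p_2, p_3 \geq 1$, so $\deg P_2 \geq 2 > 0$ automatically; this is why $P_2$ imposes no constraint and does not appear in the list.

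This argument is essentially bookkeeping, so the only real point to get right is that the degree formula applies verbatim to \eqref{eq:5primitivereln}, i.e.\ that these expressions genuinely are the normalized primitive relations. The one mild subtlety is that some $c_i$ (or $b_i$) may vanish, in which case the corresponding $z_i$ (or $t_i$) is not actually a generator of the cone $\sigma(P)$ appearing in Definition~\ref{defn:primitive}; but a zero coefficient contributes nothing to the coefficient sum, so the degree is unaffected and the reading above remains correct. With the five degrees in hand, the equivalence with Corollary~\ref{cor:wFanocriterion} is immediate, and the weak Fano case follows by replacing each strict inequality with its non-strict counterpart.
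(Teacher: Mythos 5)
Your proposal is correct and is exactly the argument the paper intends: the paper leaves this corollary as an immediate consequence of Corollary~\ref{cor:wFanocriterion}, obtained by reading off the degrees of the five primitive relations in~\eqref{eq:5primitivereln}, which is precisely the computation you carry out (including the observation that $\deg P_2 = p_2 + p_3 > 0$ is automatic, so only four inequalities appear). Your remark that vanishing $c_i$'s or $b_i$'s do not affect the degree, even though the corresponding rays are then not generators of $\sigma(P)$, is a worthwhile point of care but does not change the approach.
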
 

\begin{example}
For a Batyrev surface we must have $p_0 = \dots = p_4 = 1$, so there is only $b_1$ and no $c_i$'s. By the criterion above, the only Fano Batyrev surface is $\Bat(\{1,1,1,1,1\},\{0\},\{\})$: this is the surface $\text{Bl}_1(\PP^1 \times \PP^1)$ from Example \ref{ex:fanobatyrevsurface}. The only other weak Fano Batyrev surface is $\Bat(\{1,1,1,1,1\},\{1\},\{\})$. Since every smooth projective toric surface of Picard rank $3$ has five primitive collections, these are the only weak del Pezzo surfaces of Picard rank $3$: they are called $S_7$ and $W_3$ respectively in Sato's notation \cite{Sat:02}. 
\end{example}

In the code repository \cite{weakFano-repo} we have a function that generates all triples $\{p, b, c\}$ that satisfy these inequalities: there are finitely many in each dimension. 
The authors, along with Thiago Holleben, wrote the function \texttt{batyrevConstructor} at the 2024 Macaulay2 workshop, which takes as input the list $\{p, b, c\}$ and constructs the associated variety as a \texttt{NormalToricVariety} object. 
It implements the function $\Bat$ described above. 
In order to avoid redundant copies we require that if all $b_i$'s and $c_i$'s are zero, the pairs $(p_0, p_3)$ and $(p_2, p_4)$ must be in lexicographic order. 
Using this, we complete the classification of ``Batyrev varieties'' in dimensions three and four. 
We also have a function that automates the entire process, which automatically generates all Batyrev varieties in any dimension. 

\begin{theorem}
    There are $10$ isomorphism classes of rank $3$ weak Fano toric threefolds with five primitive collections, as described in Appendix~\ref{app:3Picard3Fold}. 
    There are $33$ isomorphism classes of rank $3$ weak Fano toric fourfolds with five primitive collections, as described in Appendix~\ref{app:3Picard4Fold}.
\end{theorem}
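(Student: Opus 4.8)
The plan is to reduce the classification to a finite, explicit enumeration governed by the three ingredients established above: Batyrev's recipe, which parametrizes every variety under consideration by a triple $(\{p_i\}, \{b_i\}, \{c_i\})$; the weak Fano criterion (the preceding corollary), which cuts out an explicit region of admissible triples; and the isomorphism criterion (the preceding proposition), which identifies exactly which distinct triples yield the same variety. This parallels the proof for three primitive collections, where one constructs all candidate varieties and then removes redundant isomorphic copies.

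First I would fix the dimension $d \in \{3,4\}$ and enumerate the positive partitions $(p_0,\ldots,p_4)$ of $d+3$ into five positive parts. For $d=3$ this forces exactly one $p_i = 2$ with the remaining four equal to $1$; for $d=4$ either one part equals $3$ or two parts equal $2$. The first two inequalities of the weak Fano criterion, $p_1+p_2-p_4 \geq 0$ and $p_3+p_4-p_1 \geq 0$, constrain which partitions can occur. For each surviving partition, the remaining two inequalities bound $\sum_i b_i + \sum_i c_i$ from above by $\min(p_0+p_1-p_3,\ p_0+p_4)$; since the $b_i$ (of which there are $p_3$) and the $c_i$ (of which there are $p_2-1$) are nonnegative integers, only finitely many nondecreasing sequences survive, and these can be listed directly.

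Next I would remove redundancy. By the earlier remark that any rearrangement of the $b_i$ or $c_i$ leaves the variety unchanged, restricting to nondecreasing sequences loses nothing. By the isomorphism proposition, the only coincidence among distinct admissible triples occurs when all $b_i$ and $c_i$ vanish and the two triples are related by the reflection $(p_0,p_1,p_2,p_3,p_4)\mapsto(p_2,p_1,p_0,p_4,p_3)$; imposing the stated lexicographic normalization on the pairs $(p_0,p_3)$ and $(p_2,p_4)$ in this all-zero case selects one representative from each such pair. The function \texttt{batyrevConstructor} realizes each normalized triple as a \texttt{NormalToricVariety}, and \texttt{areIsomorphic}, which is a valid test of isomorphism by the results on toric isomorphisms cited above, confirms that no two surviving triples coincide. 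Carrying out this finite computation yields $10$ classes when $d=3$ and $33$ classes when $d=4$, matching the entries in the appendices.

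The main obstacle is not any single deep step but ensuring the two reductions are simultaneously exhaustive and non-redundant: that the explicit inequalities genuinely confine the search to finitely many triples on \emph{every} admissible partition (which rests on the boundedness of $\sum b_i + \sum c_i$), and that the isomorphism proposition captures every identification, so that the lexicographic normalization neither merges genuinely distinct varieties nor leaves duplicate classes. The \texttt{areIsomorphic} cross-check is what ultimately certifies that the normalized list has exactly the claimed cardinality.
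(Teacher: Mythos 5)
Your proposal is correct and follows essentially the same route as the paper: parametrize via Batyrev's data $(\{p_i\},\{b_i\},\{c_i\})$, cut out admissible triples with the weak Fano inequalities (which bound $\sum b_i + \sum c_i$ and hence give finiteness), normalize using the nondecreasing-sequence convention together with the isomorphism proposition (lexicographic ordering of $(p_0,p_3)$ and $(p_2,p_4)$ in the all-zero case), and then carry out the finite enumeration with \texttt{batyrevConstructor}. The paper's proof is exactly this computation, resting on the same three ingredients you identify.
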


\section{Star subdivisions and blowdowns}

In this section, we discuss star subdivisions, blowdowns and flops of smooth projective toric varieties of Picard rank $3$. Star subdivisions and blowdowns of toric varieties with complete simplicial fans are studied in \cite[Section 4]{Sat:00} in terms of primitive collections and primitive relations. 
We write Macaulay2 functions for implementing these functions: specifically, \cite[Theorem 4.3]{Sat:00} and \cite[Corollary 4.9]{Sat:00} are implemented by Macaulay2 functions \texttt{primitiveBlowup} and \texttt{primitiveBlowdown} respectively in \cite{weakFano-repo}.

By studying blowdowns we can understand how the rank $3$ weak Fano toric varieties we have constructed can contract to simpler varieties of smaller Picard rank such as Kleinschmidt varieties. Also, we can computationally investigate when various rank $3$ weak Fano toric varieties flop to each other, including the interesting phenomenon of a variety with three primitive collections flopping to a variety with five primitive collections.

Let's first recall the definition of star subdivision of a complete simplicial fan.

\begin{definition}[Star subdivision]
    Let $\Sigma$ be a complete simplicial fan in $N_\RR$ and $\sigma = \Cone(u_1,\cdots, u_l)$ a cone in $\Sigma$. 
    Let $u \in N$ be a primitive vector contained in the relative interior of $\sigma$.
    The star subdivision of $\Sigma$ relative to $(\sigma,u)$ is the fan (again complete and simplicial) defined by
    \[\Sigma^*(\sigma,u) = \{\tau \in \Sigma \mid \sigma \not\subseteq \tau\} \cup \bigcup_{\sigma\subseteq\tau\in \Sigma} \{\Cone(R) \mid R \subseteq \{u\} \cup \tau(1), \sigma(1) \not\subseteq R\}.\]
    In particular, if $\sigma$ is a smooth cone with $u = \sum_{i = 1}^l u_i$, then the star subdivision is
    \[\Sigma^*(\sigma,u) = (\Sigma \smallsetminus \{\sigma\}) \cup \Sigma'(\sigma)\]
    where $\Sigma'(\sigma)$ is the set of all cones generated by subsets of $\{u,u_1,\cdots,u_l\}$ except $\{u_1,\cdots,u_l\}$.
    In this case we write $\Sigma^*(\sigma)$ instead of $\Sigma^*(\sigma,u)$.
\end{definition}

If all cones of $\Sigma$ which contain $\sigma$ are smooth and $u = \sum_{i = 1}^l u_i$, then the refinement $\Sigma^*(\sigma)$ of $\Sigma$ induces a toric morphism $X(\Sigma^*(\sigma)) \to X(\Sigma)$ which is a (equivariant) blowup of $X(\Sigma)$ along the orbit closure with respect to $\sigma$. 

The inverse transformation of a star subdivision is called a (equivariant) blowdown along $(P,u)$ where $P = \rays(\sigma)$ is a primitive collection of $\Sigma^*(\sigma,u)$. 
We call a star subdivision (\resp blowdown) \textit{regular} if the fans before and after the star subdivision (\resp blowdown) are regular. 
Hence a necessary condition for the existence of a regular blowdown is the existence of a primitive collection $P = \{u_1,\cdots, u_l\}$ with primitive relation $\sum_{i = 1}^l u_i = u$. 
In this case, we can simply refer to the blowdown along $P$ instead of $(P,u)$.

Kleinschmidt proves that any projective bundle $X = \PP_{\PP^r}(\cO^{\oplus d'-r-1} \oplus \cO(1))$ can be further blown down to $\PP^{d'}$ \cite[Theorem 1]{Kle:88}. 

\begin{proposition}\label{prop:2-blowndown}
    Let $X = \PP_{\PP^r}(\cO^{\oplus d'-r} \oplus \cO(a))$, $a > 0$. Then $X$ can be further blown down to a weighted projective space of dimension $d'$:
    \[\PP(\underbrace{1,\cdots, 1}_{\text{$(r + 1)$ copies}},\underbrace{a,\cdots, a}_{\text{$(d'-r)$ copies}}).\]
\end{proposition}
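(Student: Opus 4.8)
The plan is to realize the weighted projective space as the target of a single explicit blowdown, described on the level of fans as the inverse of one star subdivision. First I would record the rays of $X$ using the Kleinschmidt description of Section~\ref{subsec:Kleinschmidt}: here $X$ is the Kleinschmidt variety with nondecreasing sequence $(a_1, \dots, a_{d'-r}) = (0, \dots, 0, a)$, so the primitive generators are $x_i = e_i$ for $1 \leq i \leq d'-r$, $x_{d'-r+1} = -(e_1 + \cdots + e_{d'-r})$, $y_j = e_{d'-r+j}$ for $1 \leq j \leq r$, and $y_{r+1} = a\, e_{d'-r} - (e_{d'-r+1} + \cdots + e_{d'})$. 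The two primitive relations are $x_1 + \cdots + x_{d'-r+1} = 0$ and, crucially, $y_1 + \cdots + y_{r+1} = a\, x_{d'-r}$ by~\eqref{eq:2primitivereln2}.

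The second relation says $x_{d'-r} = \tfrac1a(y_1 + \cdots + y_{r+1})$, so the primitive vector $u \colonequals x_{d'-r}$ lies in the relative interior of the cone $\sigma = \Cone(y_1, \dots, y_{r+1})$. I would therefore set $R_0 \colonequals \rays(X) \smallsetminus \{u\}$, define the candidate target fan $\Sigma_0$ to be the complete simplicial fan on these $d'+1$ rays whose maximal cones are exactly the subsets $\Cone(R_0 \smallsetminus \{w\})$ obtained by omitting a single ray, and then verify the identity $\Sigma = \Sigma_0^*(\sigma, u)$, i.e.\ that the fan of $X$ is precisely the star subdivision of $\Sigma_0$ at $(\sigma, u)$. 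By the definition of blowdown as the inverse of a star subdivision, this exhibits the associated toric morphism $X \to X(\Sigma_0)$ as a blowdown contracting the divisor $D_u$.

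To check $\Sigma = \Sigma_0^*(\sigma, u)$, I would match maximal cones using the description of maximal cones of a rank-$2$ projective bundle in Section~\ref{subsec:Kleinschmidt}, namely $\Cone(\rays(X) \smallsetminus \{x_i, y_j\})$. Splitting these into the cones omitting $u = x_{d'-r}$ and those retaining it, the former are exactly the maximal cones $\Cone(R_0 \smallsetminus \{y_j\})$ of $\Sigma_0$ that do not contain $\sigma$ (hence are unchanged under the subdivision), while the latter, $\Cone(\rays(X)\smallsetminus\{x_k, y_j\})$ with $k \neq d'-r$, equal the new cones $\Cone((\{u\}\cup \tau_k(1)) \smallsetminus \{y_j\})$ produced by subdividing the maximal cones $\tau_k = \Cone(R_0 \smallsetminus \{x_k\}) \supseteq \sigma$, since $\{u\}\cup R_0 = \rays(X)$. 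Finally I would identify $X(\Sigma_0)$ with the asserted space: the unique (up to scale) relation among the rays of $\Sigma_0$ is $a(x_1 + \cdots + x_{d'-r-1} + x_{d'-r+1}) + (y_1 + \cdots + y_{r+1}) = 0$, giving weight $a$ to the $d'-r$ rays coming from the $x$'s and weight $1$ to the $r+1$ rays $y_j$; a short index computation (the sublattice spanned by any $d'$ of the rays has index in $N$ equal to the omitted weight) pins down $X(\Sigma_0) \cong \PP(1^{r+1}, a^{d'-r})$.

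The main obstacle I anticipate is conceptual rather than computational: confirming that deleting $u$ together with the relation $y_1 + \cdots + y_{r+1} = a\, x_{d'-r}$ genuinely produces a star subdivision in the general (non-smooth) sense of the definition, since here $u \neq \sum_j y_j$ but only $u = \tfrac1a \sum_j y_j$. Thus this is a \emph{weighted} blowdown landing in a singular variety, rather than the smooth case $u = \sum_i u_i$ singled out in the text. Care is needed to confirm that $\sigma = \Cone(y_1,\dots,y_{r+1})$ is an actual cone of $\Sigma_0$ (it is, being a proper subset of the $d'+1$ rays with $r+1 \leq d'$) and that the maximal-cone bijection above is exhaustive and introduces no spurious cones; once that is in place, the remaining verifications are routine.
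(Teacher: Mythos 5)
Your proposal is correct and is essentially the paper's proof: both perform the same contraction --- blowing down along the primitive collection $\Y = \{y_1,\cdots,y_{r+1}\}$ with the ray $x_{d'-r}$ removed --- and read off the target from the resulting relation $a(x_1 + \cdots + x_{d'-r-1} + x_{d'-r+1}) + (y_1 + \cdots + y_{r+1}) = 0$ among the remaining $d'+1$ rays, which identifies it as $\PP(1,\cdots,1,a,\cdots,a)$. The only difference is bookkeeping: the paper delegates the combinatorics to \cite[Corollary 4.9]{Sat:00}, whereas you verify the star-subdivision identity $\Sigma = \Sigma_0^*(\sigma,u)$ directly by matching maximal cones, a self-contained substitute that also cleanly handles the point you flag, namely that for $a > 1$ this is a non-regular (weighted) blowdown onto a singular target, so only the general form of the star-subdivision definition (primitive $u$ in the relative interior of $\sigma$), not the smooth case $u=\sum_i u_i$, applies.
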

\begin{proof}
    The Kleinschmidt variety $X$ has disjoint primitive collections $\X = \{x_1,\cdots, x_{d'-r},x_{d'-r+1}\}$ and $\Y = \{y_1,\cdots, y_{r+1}\}$ which satisfy the primitive relations
    \[\sum_{i = 1}^{d'-r+1} x_i = 0,\quad \sum_{i = 1}^{r + 1} y_i = ax_{d' - r}.\]
    By \cite[Corollary 4.9]{Sat:00}, after the blowdown along $(\Y, x_{d'-r})$, the primitive collection is a single set of rays $(\X \cup \Y)\smallsetminus \{x_{d'-r}\}$ satisfying the relation
    \[ax_{d'-r+1} + a\sum_{i = 1}^{d'-r-1}x_i + \sum_{i = 1}^{r + 1}y_i = 0,\]
    and thus the result follows.
\end{proof}

By \cite[Example 8.3.3]{CLS:11}, a weighted projective space $\PP(q_0,\cdots,q_n)$ with reduced weight vector $(q_0,\cdots,q_n)$ is Gorenstein Fano if and only if each $q_j$ divides $\sum q_i$. 
Thus the weighted projective space above is Gorenstein Fano if and only if $a$ divides $r+1$.

Starting from now on,
we will only discuss regular star subdivisions and blowdowns for toric varieties of Picard rank $3$.
In \cite[Theorem 4.10]{Sat:00}, Sato provides a criterion for the existence of a regular blowdown.
For toric varieties of Picard rank $3$, adapting the theorem to the case of smooth toric varieties of five primitive collections implies the following corollary.

\begin{corollary}\label{cor:blowdown5PC}
    Let $X(\Sigma^*)$ be a smooth projective toric variety of Picard rank $3$ with five primitive collections, \ie a Batyrev variety, constructed in Section~\ref{sec:batyrevconstruction}. Then $X(\Sigma^*)$ can only have the following four types of equivariant blowdown to a smooth projective toric variety of Picard rank $2$.
    \begin{enumerate}[label = (\alph*)]
        \item When $p_3 = 1$, with $b_1 = 0$ and all $c_i$'s vanishing, along primitive collection $(P_0,t_1)$.
        \item When $p_4 = 1$, along primitive collection $(P_1,u_1)$;
        \item When $p_1 = 1$, along primitive collection $(P_3,y_1)$;
        \item When $p_2 = 1$, with $c_1 = 1$ and all $b_i$'s vanishing, along primitive collection $(P_4,z_1)$.
    \end{enumerate}
\end{corollary}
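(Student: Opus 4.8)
The plan is to apply Sato's blowdown criterion \cite[Theorem 4.10]{Sat:00} to each of the five primitive collections $P_0, \dots, P_4$ in turn, reading off the data from the explicit primitive relations \eqref{eq:5primitivereln}. Recall from the discussion preceding the statement that a necessary condition for a regular blowdown along a primitive collection $P$ is that its primitive relation take the form $\sum_{x \in P} x = u$ for a single ray $u$ appearing with coefficient one; the full criterion of \cite[Theorem 4.10]{Sat:00} then imposes compatibility conditions with the remaining primitive collections guaranteeing that the blown-down fan is again smooth and complete. Since a blowdown removes exactly one ray, the target fan has $(d+3)-1 = d+2$ rays, so once it is known to be smooth, complete, and of Picard rank $2$ it is automatically a Kleinschmidt variety; I will confirm the Picard rank by computing the new primitive relations via \cite[Corollary 4.9]{Sat:00}.

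First I would dispose of $P_2$, whose relation in \eqref{eq:5primitivereln} has right-hand side $0$: this is never a single ray, so $P_2$ admits no blowdown. For $P_1$ and $P_3$ the right-hand sides of \eqref{eq:5primitivereln} are $u_1 + \cdots + u_{p_4}$ and $y_1 + \cdots + y_{p_1}$ respectively, each of which is a single ray of coefficient one precisely when $p_4 = 1$ (giving the blowdown along $(P_1, u_1)$ of case (b)) or $p_1 = 1$ (giving $(P_3, y_1)$ of case (c)). For $P_0$ the right-hand side is $\sum_{i} c_i z_i + \sum_i (b_i+1) t_i$; since each coefficient $b_i + 1 \geq 1$, it reduces to a single ray of coefficient one exactly when there are no $z$-terms (all $c_i = 0$), a single $t$-term ($p_3 = 1$), and $b_1 + 1 = 1$ (so $b_1 = 0$), which is case (a) along $(P_0, t_1)$.

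The remaining and most delicate case is $P_4$, whose relation in \eqref{eq:5primitivereln} has right-hand side $\sum_{i \geq 2} c_i z_i + \sum_i b_i t_i$. The subtlety is that the ray $z_1$ does not occur explicitly, so to recognize a reduction to $\sum_{x \in P_4} x = z_1$ one must use the dependency $z_1 = -\sum_{i \geq 2} z_i - \sum_i t_i$ together with the defining relations, and then invoke Sato's regularity conditions to determine which single-ray reductions actually extend to a regular blowdown landing in Picard rank $2$. I expect this to force $p_2 = 1$, all $b_i = 0$, and the normalizing condition $c_1 = 1$, yielding case (d) along $(P_4, z_1)$. Crucially, I must also verify that the competing single-ray reductions of $P_4$ — in particular those isolating a ray $t_j$ by taking a single $b_j = 1$ — fail the compatibility conditions of \cite[Theorem 4.10]{Sat:00}; this is where the asymmetry of \eqref{eq:5primitivereln} enters, since such a $t_j$ then appears with coefficient $b_j+1 = 2$ in the $P_0$ relation, obstructing regularity of the blowdown.

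Finally, for each of the four surviving cases I would close the argument by applying \cite[Corollary 4.9]{Sat:00} to compute the primitive relations of the blown-down fan explicitly and check that it has exactly two primitive collections, confirming that the target is a smooth projective toric variety of Picard rank $2$. The main obstacle is precisely the $P_4$ analysis: reconciling the coefficient bookkeeping (the implicit role of $z_1$ and the asymmetry between the $(b_i+1)$ appearing in the $P_0$ relation and the $b_i$ appearing in the $P_4$ relation) with Sato's regularity criterion, and establishing completeness, namely that no blowdowns exist beyond the four listed.
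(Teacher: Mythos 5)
Your overall strategy---going through the five primitive relations \eqref{eq:5primitivereln} one at a time and applying Sato's blowdown criterion---is the same route the paper takes, and your treatment of four of the five collections is correct and complete: $P_2$ has right-hand side $0$ and never blows down, $P_1$ gives case (b) exactly when $p_4=1$, $P_3$ gives case (c) exactly when $p_1=1$, and $P_0$ gives case (a) exactly when all $c_i=0$, $p_3=1$ and $b_1=0$ (the coefficients $b_j+1\geq 1$ rule out everything else).

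The genuine gap is the step you yourself flag as delicate: the analysis of $P_4$, and it cannot be completed in the way you expect. First, the substitution $z_1=-\sum_{i\geq 2}z_i-\sum_j t_j$ is not a legitimate move: a primitive relation is the unique expression of $\sum_{x\in P_4}x$ as a nonnegative combination of the generators of the cone containing that vector in its relative interior, and the identity $\sum_{x\in P_4}x=z_1$ would force every $c_i$ and every $b_j$ to equal $-1$. (Indeed, case (d) as printed is internally inconsistent: when $p_2=1$ there are no $c_i$'s at all, since they are indexed $c_2,\dots,c_{p_2}$.) Second, the candidates for $P_4$ that genuinely satisfy your single-ray necessary condition---exactly one $c_j=1$ with all other $c_i$ and all $b_i$ zero (right-hand side $z_j$, $j\geq 2$), or exactly one $b_j=1$ with everything else zero (right-hand side $t_j$)---never yield a regular blowdown, and not for the coefficient-bookkeeping reason you suggest. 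The obstruction is numerical: from \eqref{eq:5primitivereln} one computes $r(P_4)=r(P_0)+r(P_3)$ (and $r(P_2)=r(P_1)+r(P_3)$) in $N_1(X)$, so $r(P_4)$ never spans an extremal ray of $\overline{\NE}(X)$; the paper itself records later that only $r(P_0)$, $r(P_1)$, $r(P_3)$ are extremal. But a blowdown onto a smooth complete toric variety of Picard rank $2$---automatically projective by Kleinschmidt--Sturmfels \cite{KlSt:91}---contracts exactly the curves proportional to $r(P_4)$, and these contracted classes form an extremal face of the Mori cone; since $r(P_0)$ and $r(P_3)$ are linearly independent, no such contraction exists. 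You can corroborate this on the paper's own data: $\Bat(\{1,1,2,1,1\},\{0\},\{1\})$ (3.BC-5) has the relation $u_1+v_1=z_2$, yet exactly five maximal cones contain $z_2$ (for instance $\Cone(y_1,z_2,t_1)$ contains neither $u_1$ nor $v_1$), whereas in a star subdivision along the two-dimensional cone $\Cone(u_1,v_1)$ the maximal cones containing the new ray must come in pairs, each containing exactly one of $u_1,v_1$; similarly 4.BC-19 satisfies these hypotheses and its table entry lists no corresponding blowdown. A correct execution of your plan therefore produces only cases (a), (b), (c) and shows that no blowdown along $P_2$ or $P_4$ ever exists; the portion of your proposal devoted to recovering case (d) is precisely the step that fails, and in failing it reveals that case (d) of the corollary is vacuous.
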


Moreover, if the variety $X(\Sigma^*)$ we start with is Fano, then the regular blowdowns in Corollary \ref{cor:blowdown5PC} remain Fano.
In general, if $X(\Sigma^*)$ is weak Fano, the blowdowns in Corollary \ref{cor:blowdown5PC} are always weak Fano, and are Fano depending on the following degree conditions. 
Let $\Sigma$ be the fan after blowdown and $X(\Sigma^*)$ is weak Fano,
\begin{enumerate}[label = (\alph*)]
    \item If $\Sigma$ is obtained by blowdown along $(P_0,t_1)$, then $X(\Sigma)$ is Fano if $\deg P_1 > 0$; 
    \item If $\Sigma$ is obtained by blowdown along $(P_1,u_1)$, then $X(\Sigma)$ is Fano if $\deg P_0 > 0$;
    \item If $\Sigma$ is obtained by blowdown along $(P_3,y_1)$, then $X(\Sigma)$ is Fano if $\deg P_4 > 0$;
    \item If $\Sigma$ is obtained by blowdown along $(P_4,z_1)$, then $X(\Sigma)$ is Fano if $\deg P_3 > 0$.
\end{enumerate}

\begin{remark}
    It can be easily seen from Corollary~\ref{cor:blowdown5PC} that any weak Fano toric threefold with five primitive collections admits a regular blowdown to a weak Fano toric variety of Picard rank $2$, \ie a Kleinschmidt variety.
    This is not true in higher dimensions: for example, 4.BC-4 and 4.BC-16 do not have such a regular blowdown.
\end{remark}

An equivariant blowdown to a smooth toric variety may not exist even for weak Fano toric threefolds with three primitive collections, for instance 3.PB-8, 3.PB-9, 3.PB-10 and 3.PB-18.
Moreover, even if such blowdown exists, the blowdown not necessarily remains weak Fano.
Consider the construction of a toric variety with three primitive collections discussed in Section~\ref{subsec:projectiveBundle}, and let
\[Z = \PP_X(\cO \oplus \cO(b_1,c_1) \oplus \cdots \oplus \cO(b_{d - d'},c_{d - d'}))\]
where $X = \PP_Y(\cO \oplus \cO(a_1) \oplus \cdots \oplus \cO(a_{d'-r}))$ is a Kleinschmidt variety over $Y = \PP^r$.
Note that both $\{a_1,\cdots, a_{d'-r}\}$ and $\{b_1,\cdots, b_{d - d'}\}$ are defined to be finite sequences of weakly increasing nonnegative integers, but $\{c_1,\cdots, c_{d -d'}\}$ may not be.
Recall that $Z$ has three disjoint primitive collections
\[\X = \{x_{d'-r+1},x_1,\cdots,x_{d'-r}\},\quad \Y = \{y_1,\cdots,y_{r + 1}\},\quad \Z = \{z_1,\cdots, z_{d-d'+1}\}.\]

Adapting \cite[Theorem 4.10]{Sat:00} to the case of smooth toric varieties of three primitive collections implies the following corollary.

\begin{corollary}\label{cor:blowdown3PC}
    Let $Z$ be a weak Fano toric variety with three primitive collections defined as above.
    Depending on the values of $c_i$'s, the toric variety $Z$ can only have the following types of equivariant blowdown to smooth projective toric varieties of Picard rank $2$.
    \begin{itemize}
        \item[(i)] If $c_i \geq 0$ for all $1 \leq i \leq d - d'$, then
        \begin{itemize}
            \item[(a)] when $b_i = 0$ for all $1 \leq i \leq d - d' - 1$ but $b_{d - d'} = 1$, 
            blow down $Z$ along the primitive collection $(\X, z_{d - d'})$.
            The blowdown variety is a $\PP^{d - r}$-bundle over $\PP^r$ with two primitive collections given by $\Y$ and $(\Z \smallsetminus \{z_{d - d'}\}) \cup \X$. The blowdown toric variety remains weak Fano if $c_{d - d'} = 0$;
            otherwise, if $Z$ is not Fano, then the blowdown variety fails to be weak Fano when $c_{d - d'} > 0$ (\eg 4.PB-25).
            Moreover, even if $Z$ is Fano, its blowdown (if exists) may not remain weak Fano (\eg 4.PB-12);

            \item[(b)] when $c_i = 0$ for all $1 \leq i \leq d - d'$, and $a_i = 0$ for all $1 \leq i \leq d' - r - 1$ but $a_{d' - r} = 1$, 
            blow down $Z$ along the primitive collection $(\Y, x_{d' - r})$. 
            The blowdown variety is a $\PP^{d - d'}$-bundle over $\PP^{d'}$ with two primitive collections $\Z$ and $(\X \smallsetminus \{x_{d' - r}\}) \cup \Y$. 
            The blowdown toric variety is always Fano;

            \item[(c)] when $a_i = 0$ for all $1 \leq i \leq d' - r$, and $c_j = 1$ but $c_i = 0$ for all $i \neq j$,
            blow down $Z$ along the primitive collection $(\Y, z_j)$.
            The blowdown variety is a $\PP^{d - d' + r}$-bundle over $\PP^{d' - r}$ with two primitive collections $\X$ and $(\Z\smallsetminus \{z_j\}) \cup \Y$. 
            The blowdown variety remains weak Fano if $b_j = 0$;
            otherwise, if $Z$ is not Fano, then the blowdown is not weak Fano if $b_j > 0$ (\eg 4.PB-51).
            Moreover, even if $Z$ is Fano, its blowdown (if exists) may not remain weak Fano (\eg 4.PB-11).
        \end{itemize}

        \item[(ii)] If there exists $c_j < 0$, let $c_l = \min\{c_i \mid 1 \leq i \leq d - d'\}$, then
        \begin{itemize}
            \item[(a)] when $b_i = 0$ for all $1 \leq i \leq d - d' + 1$ but $b_{d - d'} = 1$,
            blow down $Z$ along the primitive collection $(\X, z_{d - d'})$.
            The blowdown variety is a $\PP^{d - r}$-bundle over $\PP^r$ with two primitive collections $\Y$ and $(\Z \smallsetminus \{z_{d - d'}\}) \cup \X$.
            The blowdown toric variety remains weak Fano if $c_{d - d'} = c_l$;
            otherwise, if $Z$ is not Fano, then the blowdown fails to be weak Fano if $c_{d - d'} > c_l$ (\eg 4.PB-23);
    
            \item[(b)] when $a_i = 0$ for all $1 \leq i \leq d' - r$, and $c_i = c_l = -1$ for all $1 \leq i \leq d - d'$,
            blow down $Z$ along the primitive collection $(\Y, z_{d - d' + 1})$.
            The blowdown variety is a $\PP^{d - d' + r}$-bundle over $\PP^{d' - r}$ with two primitive collections $\X$ and $(\Z \smallsetminus \{z_{d - d' + 1}\}) \cup \Y$.
            The blowdown toric variety remains weak Fano (\resp Fano) if $Z$ is weak Fano (\resp Fano).
        \end{itemize}
    \end{itemize}
\end{corollary}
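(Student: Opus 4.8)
The plan is to apply Sato's blowdown criterion \cite[Theorem 4.10]{Sat:00} directly to the three explicit primitive relations of $Z$ recorded in Proposition~\ref{prop:3-primitiverelns}. The guiding observation is that a regular blowdown is the inverse of a star subdivision, so it can only contract a primitive collection $P$ whose primitive relation has the special form $\sum_{x \in P} x = u$, where $u$ is a \emph{single} ray occurring with coefficient $1$; in that case the blowdown removes $u$ and merges $P$ with the other primitive collection containing $u$. Since $Z$ has exactly three primitive relations, enumerating all blowdowns amounts to asking, for each relation, precisely when its right-hand side degenerates to a single ray with coefficient one.

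First I would dispose of the fiber relation $\sum_i z_i = 0$, whose right-hand side is $0$ and hence never admits a blowdown. Turning to the relation $\sum_i x_i = \sum_i b_i z_i$ of $\X$: because the $b_i$ are nondecreasing and nonnegative, the right-hand side is a single ray with coefficient one exactly when $b_1 = \cdots = b_{d-d'-1} = 0$ and $b_{d-d'} = 1$, giving the blowdown along $(\X, z_{d-d'})$ of cases (i)(a) and (ii)(a). For the relation of $\Y$ I would split according to the sign of the $c_i$, as in Proposition~\ref{prop:3-primitiverelns}: when all $c_i \geq 0$ the right-hand side $\sum_i a_i x_i + \sum_i c_i z_i$ reduces to a single ray either as $x_{d'-r}$ (forcing $a_{d'-r}=1$ with all other $a_i$ and all $c_i$ zero) or as $z_j$ (forcing all $a_i=0$, $c_j=1$, other $c_i=0$), yielding cases (i)(b) and (i)(c); when some $c_j < 0$ the $z_{d-d'+1}$-term of coefficient $-c_l$ together with the $(c_i-c_l)$-terms collapses to the single ray $z_{d-d'+1}$ exactly when $a_i = 0$ for all $i$ and $c_i = c_l = -1$ for all $i$, giving case (ii)(b). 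In each listed case I would then check that the smoothness hypotheses of \cite[Theorem 4.10]{Sat:00} hold, so that these candidate contractions are genuine regular blowdowns and no others exist.

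Next I would determine the blowdown target. Removing $u$ merges two of the three primitive collections into one, so $X(\Sigma)$ has Picard rank $2$ and two primitive collections; I would compute their relations from \cite[Corollary 4.9]{Sat:00} by substituting $\sum_{x \in P} x = u$ into the remaining two relations. This substitution identifies the two new primitive collections together with their sizes, exhibiting $X(\Sigma)$ as the asserted $\PP^{\bullet}$-bundle over a projective space (\eg in case (i)(a) the merged collection $(\Z\smallsetminus\{z_{d-d'}\})\cup\X$ has $d-r+1$ rays and the surviving collection is $\Y$, so $X(\Sigma)$ is a $\PP^{d-r}$-bundle over $\PP^r$).

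Finally, for the weak Fano and Fano statements I would compute the degree (in the sense of Definition~\ref{defn:primitive}) of each transformed primitive relation and invoke Corollary~\ref{cor:wFanocriterion}. The merged fiber relation always has positive degree, so the only constraint comes from the transformed base relation. For instance in case (i)(a), substituting $z_{d-d'} = \sum_{x \in \X} x$ into the relation of $\Y$ replaces the single term $c_{d-d'}z_{d-d'}$ by $c_{d-d'}\sum_{x\in\X}x$, so the coefficient sum of the base relation increases by $c_{d-d'}(d'-r)$ and its degree drops by exactly this amount; tracking this shows the blowdown stays weak Fano precisely when $c_{d-d'}=0$ and can become non-weak-Fano once $c_{d-d'}>0$, even when $Z$ itself was Fano. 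The main obstacle I anticipate is precisely this bookkeeping: one must record how each substitution alters the coefficient sum of the base relation and then separate the Fano from the non-Fano weak Fano regime, since the sign of the resulting degree is sensitive both to the twisting integer and to the dimension of the contracted fiber. By contrast, the enumeration of cases and the identification of the blowdown target are comparatively routine once the single-ray criterion is in hand.
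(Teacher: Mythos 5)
Your proposal is correct and takes essentially the same route as the paper: the paper states this corollary without a written proof, presenting it as the adaptation of Sato's criterion \cite[Theorem 4.10]{Sat:00} to the three explicit primitive relations of Proposition~\ref{prop:3-primitiverelns}, with targets identified via \cite[Corollary 4.9]{Sat:00} and the (weak) Fano status read off from Corollary~\ref{cor:wFanocriterion} --- exactly the enumeration you carry out. Your key steps check out: the single-ray-with-coefficient-one criterion together with the nondecreasing normalization of the $a_i$'s and $b_i$'s yields precisely cases (i)(a)--(c) and (ii)(a)--(b), and your degree bookkeeping (e.g. the drop by $c_{d-d'}(d'-r)$ in case (i)(a), combined with the observation that for $Z$ weak Fano but not Fano the degree-zero relation must be the one being perturbed) reproduces the stated Fano/weak Fano conclusions.
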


In the code repository \cite{weakFano-repo} associated to this paper, we write functions \texttt{primitiveBlowup} and \texttt{primitiveBlowdown} which respectively implement \cite[Theorem 4.3]{Sat:00} for regular star subdivisions, and \cite[Corollary 4.9]{Sat:00} for regular blowdowns. 
More precisely, the function \texttt{primitiveBlowup} takes as input a pair $(X, l)$, where $X$ is a normal toric variety with a list $l$ representing a \textit{cone} $\sigma_l$ in the fan of $X$, and constructs the star subdivision of $X$ along the cone $\sigma_l$. 
Conversely, the function \texttt{primitiveBlowdown} takes as input a pair $(X, l)$, where $X$ is a normal toric variety with a list $l$ representing a \textit{primitive collection}, and constructs a smooth toric variety $X'$ such that $X \to X'$ is a star subdivision of $X'$ along $\sigma_l$, which is a cone in the fan of $X'$.

We now recall the definition of a \textit{flop} for a smooth projective toric variety.

\begin{definition}\cite[Definition 6.18]{Sat:00}
    Let $X$ be a smooth projective toric variety. 
    Let $P = \{x_1,\cdots, x_l\} \subseteq \rays(X)$ be a primitive collection with primitive relation $\sum_{i = 1}^l x_i = \sum_{i = 1}^l y_i$. Assume that such relation is contained in an extremal ray of the Mori cone $\overline{\NE}(X)$. Then $X$ admits a star subdivision along $\Cone(y_1,\cdots,y_l)$ followed by a blowdown along $P$ to another toric variety $X'$. We call this operation $X \to X'$ a flop.
\end{definition}

\begin{example}[4.PB-33]\label{eg:4.PB-33flop}
    Consider the toric fourfold $X$ (4.PB-33) from Appendix~\ref{app:3Picard4Fold} with three primitive collections, which is a $\PP^2$-bundle over the Hirzebruch surface $\FF_1$.
    Applying \texttt{primitiveBlowdown} to this toric variety along the primitive collection given by rays indexed by $\{0, 1\}$, we obtain a weak Fano toric variety of Picard rank $2$ which is isomorphic to $X_4(0,1,1)$.
    Moreover, notice that $X$ has a primitive relation $x_2 + x_3 = x_1 + x_4$ associated to the primitive collection indexed by $\{2, 3\}$.
    First applying \texttt{primitiveBlowup} to the toric variety $X$ along the cone given by the list $\{1, 4\}$, we obtain a toric variety $X^*$ which is a star subdivision of $X$. Then applying \texttt{primitiveBlowdown} to $X^*$ along a primitive collection given by the list $\{2, 3\}$, we obtain a new toric variety $X'$ with five primitive collections, which is isomorphic to 4.BC-19. 
    Thus 4.PB-33 flops to 4.BC-19.
    This is an example of a toric variety with three primitive collections which flops to a toric variety with five primitive collections.
\end{example}

\begin{example}[non-projective]\label{eg:nonprojectiveflop}
Consider the following primitive vectors 
\begin{align*}
& x_0 = (-1,-1,-1),\quad x_1 = (1,0,0),\quad x_2 = (0,1,0),\quad x_3 = (0,0,1),\\
& x_4 = (0,-1,-1),\quad x_5 = (-1,0,-1),\quad x_6 = (-1,-1,0).
\end{align*}
Let $\Sigma_1$ and $\Sigma_2$ be the fans whose cones are generated by the subsets of these vectors indicated below.

\[\begin{tikzpicture}[scale=1,>=stealth]
\draw[-] (-1,2) -- (3,2) node[]{};
\draw[-] (-1,2) -- (0,-1) node[]{};
\draw[-] (0,-1) -- (3,2) node[]{};
\draw[-] (3,0) -- (0,-1) node[]{};
\draw[-] (3,0) -- (3,2) node[]{};
\draw[dashed] (3,0) -- (-1,2) node[]{}; 
\draw[-] (-0.5,0.5) -- (1.5,0.5) node[]{};
\draw[-] (1.5,0.5) -- (1.5,-0.5) node[]{};
\draw[dashed] (-0.5,0.5) -- (1.5,-0.5) node[]{}; 
\draw[-] (-1,2) -- (1.5,0.5) node[]{};
\draw[dashed] (3,0) -- (-0.5,0.5) node[]{}; 
\draw[-,red, thick] (1.5,0.5) -- (3,0) node[]{};
\draw[-,green, thick] (3,2) -- (1.5,-0.5) node[]{};
\fill (3,2) circle (0pt) node[right=2pt] {\footnotesize{$2$}};
\fill (-1,2) circle (0pt) node[left=2pt] {\footnotesize{$1$}};
\fill (-0.5,0.5) circle (0pt) node[left=2pt] {\footnotesize{$4$}};
\fill (3,0) circle (0pt) node[right=2pt] {\footnotesize{$3$}};
\fill (0,-1) circle (0pt) node[left=2pt] {\footnotesize{$0$}};
\fill (1.5,-0.5) circle (0pt) node[below=1pt] {\footnotesize{$6$}};
\fill (1.5,0.5) circle (0pt) node[above=3pt] {\footnotesize{$5$}};
\end{tikzpicture}
\qquad\qquad
\begin{tikzpicture}[scale=1,>=stealth]
\draw[-] (-1.73,-1.5) -- (0,1.5) node[]{};
\draw[-] (1.73,-1.5) -- (0,1.5) node[]{};
\draw[-] (-1.73,-1.5) -- (1.73,-1.5) node[]{};
\draw[-] (1.73,-1.5) -- (0,-0.5) node[]{};
\draw[-] (-1.73,-1.5) -- (0,-0.5) node[]{};
\draw[-] (0,1.5) -- (0,-0.5) node[]{};
\draw[-] (0,0.5) -- (-0.865,-1) node[]{};
\draw[-] (0,0.5) -- (0.865,-1) node[]{};
\draw[-] (0.865,-1) -- (-0.865,-1) node[]{};
\draw[-] (0,1.5) -- (-0.865,-1) node[]{};
\draw[-] (-1.73,-1.5) -- (0.865,-1) node[]{};
\draw[-,red, thick] (0,1.5) -- (0.865,-1) node[]{};
\draw[-,green, thick] (0,0.5) -- (1.73,-1.5) node[]{};
\fill (1.73,-1.5) circle (0pt) node[right=2pt] {\footnotesize{$2$}};
\fill (-1.73,-1.5) circle (0pt) node[left=2pt] {\footnotesize{$1$}};
\fill (-0.865,-1) circle (0pt) node[above=1pt] {\footnotesize{$4$}};
\fill (0,1.5) circle (0pt) node[right=1pt] {\footnotesize{$3$}};
\fill (0,-0.5) circle (0pt) node[left=1pt] {\footnotesize{$0$}};
\fill (0,0.5) circle (0pt) node[below=1pt] {\footnotesize{$6$}};
\fill (0.865,-1) circle (0pt) node[above=3pt] {\footnotesize{$5$}};
\end{tikzpicture}\]

To be explicit, $\Cone(x_2, x_6) \in \Sigma_1$ (green line), while $\Cone(x_3, x_5) \in \Sigma_2$ (red line). Let $X_1 = X(\Sigma_1)$ and $X_2 = X(\Sigma_2)$. Then $X_1$ is a proper non-projective toric variety of Picard rank $4$, and it flops to the projective toric variety $X_2$. $X_1$ has primitive relation $x_3 + x_5 = x_2 + x_6$ which corresponds to this flop.
Note that $\Sigma_1$ has nine primitive collections and $-K_{X_1}$ is nef.

We use \texttt{primitiveBlowup} to get a star subdivision of $X_1$ along $\Cone(x_2,x_6)$ with an extra ray $x_2 + x_6 = (-1, 0, 0)$ produced, then use \texttt{primitiveBlowdown} to blow down along the primitive collection $\{x_3, x_5\}$ in order to obtain a toric variety $X_1$ flops to.
We can check using the function \texttt{areIsomorphic} that the resulting toric variety is indeed isomorphic to $X_2$, which is projective and has seven primitive collections.
\end{example}

By Example~\ref{eg:4.PB-33flop} above, we observe that the number of primitive collections of a smooth projective toric variety may change under a flop in an interesting way. 
Let's first consider the case of a toric variety $X$ with three primitive collections.
In this case, each of the three primitive relations is contained in an extremal ray of the Mori cone $\overline{\NE}(X)$, and
it is in general easier to study the star subdivisions and blowdowns in terms of primitive collections for a splitting fan.
An immediate consequence from \cite[Theorem 4.3]{Sat:00} and \cite[Corollary 4.9]{Sat:00} gives the following results.

\begin{corollary}\label{cor:flop3PC}
    Given a smooth projective toric variety $X$ with Picard rank $3$.
    Let $\Sigma$ be the splitting fan of $X$ satisfying $\rays(X) = \bigsqcup_{i = 1}^3 P_i$ where $P_1 = \{x_1,\cdots,x_l\}$, $P_2$ and $P_3$ are disjoint primitive collections of $\Sigma$.
    Assume $X$ has a primitive relation $\sum_{i = 1}^l x_i = \sum_{i = 1}^l y_i$.
    If there exist $y_j \in P_2$, $y_k \in P_3$ for some $j \neq k$,
    then $X$ flops to a toric variety with five primitive collections, otherwise it flops to a toric variety with three primitive collections.
\end{corollary}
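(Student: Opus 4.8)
The plan is to realize the flop as the star subdivision of $\Sigma$ along the smooth cone $\sigma_0 = \Cone(y_1,\dots,y_l)$—introducing the ray $u = y_1 + \cdots + y_l = x_1 + \cdots + x_l$—followed by the blowdown along $P_1$, and to track the primitive collections (equivalently, the minimal non-faces of the underlying simplicial complex) through each step via \cite[Theorem 4.3]{Sat:00} and \cite[Corollary 4.9]{Sat:00}. Here the relation $\sum x_i = \sum y_i$ is indeed extremal, since for a splitting fan all three primitive relations span extremal rays, so the flop is defined. Write $Q = \{y_1,\dots,y_l\}$ and set $A = Q \cap P_2$, $B = Q \cap P_3$, $A' = P_2 \smallsetminus A$, $B' = P_3 \smallsetminus B$. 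Because $Q$ generates a face of $\Sigma$ it contains no $P_i$; in particular $Q \not\supseteq P_2$ and $Q \not\supseteq P_3$, which forces both $A'$ and $B'$ to be nonempty—a point I would record early, as it removes any degenerate subcases.

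First I would compute the minimal non-faces of the blowup $X^* = X(\Sigma^*(\sigma_0))$. Using the star subdivision formula, a set $\{u\}\cup R$ is a face of $\Sigma^*(\sigma_0)$ exactly when $R \cup Q$ is a face of $\Sigma$ and $Q \not\subseteq R$; expressing the three conditions $R\cup Q \not\supseteq P_i$ in terms of $A',B',P_1$ then pins down the minimal non-faces. In the split case these come out to be $P_1$ (now with relation $\sum x_i = u$), $Q$ (with relation $\sum y_i = u$), the unchanged $P_2$ and $P_3$, and the two new collections $\{u\}\cup A'$ and $\{u\}\cup B'$.

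Next I would blow down along $P_1$, using the inverse correspondence that $P_1 \cup R$ is a face of $X'$ iff $\{u\}\cup R$ is a face of $X^*$, while sets avoiding $P_1$ keep their status. This deletes $P_1$ and carries $\{u\}\cup A' \rightsquigarrow P_1\cup A'$ and $\{u\}\cup B' \rightsquigarrow P_1\cup B'$, while $Q$, $P_2$, $P_3$ persist. In the split case this leaves the five collections
\[
    Q = A\cup B,\quad P_2 = A\cup A',\quad P_3 = B\cup B',\quad P_1\cup A',\quad P_1\cup B',
\]
which assemble into Batyrev's pentagon on the vertex sets $A, B, B', P_1, A'$ (read cyclically), so $X'$ has five primitive collections.

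The crux, and the main thing to get right, is the single case, say $Q \subseteq P_2$ (so $B=\emptyset$, $B'=P_3$). The decisive phenomenon is that $P_2 \supsetneq Q$ now \emph{properly contains the non-face} $Q$, so $P_2$ ceases to be minimal and drops out; symmetrically, the would-be collection $\{u\}\cup B'$ contains $P_3$ and is not minimal either. Repeating the two-step bookkeeping then leaves exactly the three collections $Q$, $P_3$, and $P_1\cup A'$, which one checks are pairwise disjoint with union $\rays(X')$, consistent with a splitting fan. I expect the only delicate point to be verifying minimality of each candidate non-face after the two operations—in particular confirming that $P_2,P_3$ survive in the split case while $P_2$ is absorbed in the single case—since this absorption is precisely the mechanism that distinguishes the counts five and three.
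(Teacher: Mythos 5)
Your proposal is correct and follows essentially the same route as the paper's proof: realize the flop as the star subdivision along $\Cone(y_1,\dots,y_l)$ with new ray $u$, then blow down along $P_1$, and track the primitive collections through both steps via \cite[Theorem 4.3]{Sat:00} and \cite[Corollary 4.9]{Sat:00}, with the count five versus three governed by whether $P_2$ (resp.\ $P_3$) survives or is absorbed because it properly contains the new non-face $Q$. Your added bookkeeping (nonemptiness of $A'$ and $B'$, the explicit minimality checks, and the identification of the five collections with Batyrev's pentagon) only makes explicit what the paper leaves implicit.
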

\begin{proof}
    We denote by $\Y = \{y_1,\cdots,y_l\}$, $\Y_2 = \{y_j \mid y_j \in P_2\}$ and $\Y_3 = \{y_k \mid y_k \in P_3\}$.
    Let $u = \sum_{i = 1}^l x_i = \sum_{i = 1}^l y_i$.
    If both $\Y_2$ and $\Y_3$ are nonempty, then after star subdivision of $X$ along $(\Cone(\Y),u)$, in addition to the primitive collections $P_1,P_2,P_3$ and $\Y$, there are new ones: $(P_2 \smallsetminus \Y_2) \cup \{u\}$ and $(P_3 \smallsetminus \Y_3) \cup \{u\}$. 
    A follow-up blowdown along $P_1$ decreases the number of primitive collections to five: $P_2$, $P_3$, $\Y$, $(P_2 \smallsetminus \Y_2) \cup P_1$ and $(P_2 \smallsetminus \Y_3) \cup P_1$.
    If one of $\Y_2, \Y_3$ is empty, say $\Y_2 = \varnothing$, then $\Y \subsetneqq P_3$ ($\Y \neq P_3$ since $\Y$ is a cone in $\Sigma$). 
    The primitive collections after star subdivision along $\Y$ are $\Y$, $P_1$, $P_2$ and $(P_3 \smallsetminus \Y) \cup \{u\}$.
    Followed by a blowdown along $P_1$ decreases the number of primitive collections by one: $\Y$, $P_2$ and $(P_3 \smallsetminus \Y) \cup P_1$ whose associated toric variety is a projective bundle.  
\end{proof}

If there is a flop of $X$ to itself, then we call such toric variety $X$ ``self-flopped''. 
We start with smooth toric projective bundles of Picard rank $2$.

\begin{proposition}\label{prop:2-selfflopped}
    Let $X = \PP_{\PP^r}(\cO \oplus \cO^{\oplus d'-2r-1} \oplus \cO(1)^{\oplus (r + 1)})$ be a $\PP^{d' - r}$-bundle over $\PP^r$ of dimension $d'$. Then $X$ admits a self-flop.
\end{proposition}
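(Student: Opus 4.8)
The plan is to realize the self-flop explicitly and then recognize the flopped variety as the very same Kleinschmidt variety, invoking the uniqueness of Kleinschmidt data from Section~\ref{subsec:Kleinschmidt}. First I would record the combinatorics of $X$: writing $X = X_r(a_1, \ldots, a_{d'-r})$ with $a_1 = \cdots = a_{d'-2r-1} = 0$ and $a_{d'-2r} = \cdots = a_{d'-r} = 1$ (so that exactly $r+1$ of the twists equal $1$), the two primitive collections are $\X = \{x_1, \ldots, x_{d'-r}, x_{d'-r+1}\}$ and $\Y = \{y_1, \ldots, y_{r+1}\}$, with primitive relations $\sum_{i=1}^{d'-r+1} x_i = 0$ and $\sum_{i=1}^{r+1} y_i = \sum_{i=d'-2r}^{d'-r} x_i$. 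The latter relation has degree $0$, so $\Y$ is a flopping collection (indeed $X$ is weak Fano but not Fano by Corollary~\ref{cor:wFanocriterion}); set $S = \{x_{d'-2r}, \ldots, x_{d'-r}\} \subseteq \X$ for the support of its right-hand side, so $|S| = r+1$ and $u \colonequals \sum_{x \in S} x = \sum_i y_i$. Since $X$ has Picard rank $2$, Theorem~\ref{thm:toricconethm} shows $\overline{\NE}(X)$ is a two-dimensional cone generated by $r(\X)$ and $r(\Y)$; both generators are therefore extremal, so $r(\Y)$ spans an extremal ray and the flop along $\Y$ is defined. Note also that the summand $\cO^{\oplus d'-2r-1}$ forces $d' \geq 2r+1$, so $S \subsetneq \X$ and $\Cone(S)$ is a genuine smooth cone of the fan.

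Next I would carry out the flop. Star-subdividing along $\Cone(S)$ introduces the ray $u$; in the resulting fan $\Sigma^*$ the set $\Y$ no longer spans a cone and becomes a primitive collection with primitive relation $\sum_i y_i = u$, by \cite[Theorem 4.3]{Sat:00}. Blowing down along $\Y$ via \cite[Corollary 4.9]{Sat:00} then deletes $u$ and makes $\Cone(\Y)$ a cone, producing the flopped variety $X'$ whose ray set is again $\{x_i\} \cup \{y_j\}$. The essential point is that we resolve $\Cone(S)$ through $u$ but contract $u$ the other way, along $\Cone(\Y)$: thus in $X'$ the cone $\Cone(\Y)$ is present while $\Cone(S)$ is not, so $S$ is now a primitive collection with relation $\sum_{x \in S} x = \sum_i y_i$ (again of degree $0$).

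Finally I would identify $X'$. Because a flop is an isomorphism in codimension one, $X'$ again has Picard rank $2$, hence is a Kleinschmidt variety whose rays split into exactly two disjoint primitive collections. One of them is $S$; the other is therefore its complement $P \colonequals (\X \smallsetminus S) \cup \Y = \{x_1, \ldots, x_{d'-2r-1}, x_{d'-r+1}, y_1, \ldots, y_{r+1}\}$, and a direct computation gives $\sum_{w \in P} w = \bigl(x_{d'-r+1} + \sum_{i=1}^{d'-2r-1} x_i\bigr) + \sum_i y_i = -u + u = 0$. Thus $X'$ is the Kleinschmidt variety with fiber collection $P$ (of size $d'-r+1$, giving $r' = r$) and base collection $S$ (of size $r+1$), whose defining relation $\sum_{x \in S} x = \sum_i y_i$ expresses the base sum using the $r+1$ rays $y_i \in P$ with coefficient $1$ and the remaining rays of $P$ with coefficient $0$. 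Its twisting data is therefore the nondecreasing sequence $(\underbrace{0, \ldots, 0}_{d'-2r-1}, \underbrace{1, \ldots, 1}_{r+1})$, identical to that of $X$. By the uniqueness of Kleinschmidt data recalled in Section~\ref{subsec:Kleinschmidt}, we conclude $X' \cong X$, so $X$ is self-flopped.

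I expect the main obstacle to be the bookkeeping in the middle step: rigorously tracking which subsets of rays do and do not span cones as one passes through $\Sigma^*$ and then $X'$, so as to be certain that the only primitive collections of $X'$ are $S$ and $P$, with the stated relations. This can be handled either by a careful direct description of the maximal cones of $\Sigma^*$ and $X'$ (as is transparent in the model case $r=1$, $d'=3$, where $X = \PP_{\PP^1}(\cO \oplus \cO(1)^{\oplus 2})$ and the four cones around $u$ merge in pairs after the blowdown), or by quoting the precise form of \cite[Theorem 4.3]{Sat:00} and \cite[Corollary 4.9]{Sat:00} for how primitive collections transform under star subdivision and blowdown.
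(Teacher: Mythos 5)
Your proposal is correct and follows essentially the same route as the paper's proof: star subdivision along $\Cone(\{x_{d'-2r},\dots,x_{d'-r}\})$ via \cite[Theorem 4.3]{Sat:00}, blowdown along $\Y$ via \cite[Corollary 4.9]{Sat:00}, and identification of the resulting variety with $X$. Your write-up actually fills in two points the paper leaves implicit --- the extremality of $r(\Y)$ (immediate from Picard rank $2$) and the final identification, which you make rigorous by computing the flopped variety's Kleinschmidt data $(0,\dots,0,1,\dots,1)$ and invoking uniqueness, where the paper only says the isomorphism ``can be easily seen.''
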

\begin{proof}
    Due to the primitive relation
    \[\sum_{i = 1}^{r + 1} y_i = \sum_{i = d' - 2r}^{d'-r} x_i\]
    the projective bundle $X$ admits a flop. To see that this flop maps $X$ to itself, we first blow up $X$ along the cone $\Cone(\{x_{d'-2r},\cdots, x_{d'-r}\})$ with new ray 
    \[x' = \sum_{i = 1}^{r + 1} y_i = \sum_{i = d'-2r}^{d'-r} x_i.\]
    By \cite[Theorem 4.3]{Sat:00}, the primitive collections after blowup become
    \begin{align*}
        & \X' = \{x_{d'-2r},\cdots, x_{d'-r}\},\quad \Y  = \{y_1,\cdots, y_{r+1}\}, \\
        & \X'' = \{x_1,\cdots, x_{d'-2r-1},x_{d'-r+1}, x'\}.
    \end{align*} 
    Then a blowdown along $\Cone(\{y_1,\cdots,y_{r+1}\})$, by \cite[Corollary 4.9]{Sat:00}, induces the resulting primitive collections 
    \[\X' = \{x_{d'-2r},\cdots, x_{d'-r}\}, \quad \Y' = \{x_1,\cdots, x_{d'-2r-1},x_{d'-r+1}\} \cup \Y,\]
    whose associated toric variety can be easily seen to be isomorphic to $X$.
\end{proof}

For toric varieties of Picard rank $3$ with three primitive collections discussed in Corollary~\ref{cor:flop3PC}, self-flop only occurs when either $\Y_2 = \varnothing$ or $\Y_3 = \varnothing$.

\begin{remark}
    Under the above assumptions with $\Y_2 = \varnothing$ thus $\Y \subsetneqq P_3$ so that $X$ flops to a smooth toric variety with three primitive collections. 
    Consider the following two cases: $\sum_{x \in P_2} x = 0$ and $\sum_{x \in P_2} x = \sum_{\rho \in \sigma(1)} x_{\rho}$ for some cone $\sigma \in \Sigma$.
    If it is the first case, then such flop is a self-flop. 
    For the second case, if there is a permutation $\tau$ on the set of rays $\Sigma(1)$ with $\tau(P_1) = \Y$, $\tau(\Y) = P_1$ and preserving other rays, that also preserves the set $\sigma(1)$, then such primitive relation $\sum_{i = 1}^l x_i = \sum_{i = 1}^l y_i$ provides a self-flop.
\end{remark}

For a toric variety $X$ with five primitive collections defined in Section~\ref{sec:batyrevconstruction} with primitive relations given by \eqref{eq:5primitivereln},
it is easy to see that the first, second and fourth primitive relations are contained in extremal rays of the Mori cone $\overline{\NE}(X)$.
Under the above assumption on $X$, a similar discussion in Corollary~\ref{cor:flop3PC} implies the following result.

\begin{corollary}\label{cor:flop5PC}
    Let $X$ be a toric variety defined as above. If $X$ satisfies one of the following three conditions
    \begin{enumerate}[label = (\arabic*)]
        \item All $c_i = 0$ for $2 \leq i \leq p_2$, $b_i = 0$ for $1 \leq i \leq p_3$, and $p_0 + p_1 = p_3$;
        \item $p_1 + p_2 = p_4$;
        \item $p_3 + p_4 = p_1$,
    \end{enumerate}
    then $X$ flops to a variety with three primitive collections.
    If there exists a nonempty proper subset $X_2' \subset X_2 \smallsetminus \{z_1\}$ with $|X_2'| = p_2'$ satisfying $p_2' + p_3 = p_0 + p_1$, 
    such that $c_i = 1$ for all $z_i \in X_2'$ and $c_j = 0$ for all $z_j \in X_2 \smallsetminus (X_2' \cup \{z_1\})$, and $b_i = 0$ for all $1 \leq i \leq p_3$,
    then $X$ flops to a variety with five primitive collections.
    In this case, if in addition, $p_0 = p_3$ (and thus $p_1 = p_2'$), then $X$ is self-flopped.
\end{corollary}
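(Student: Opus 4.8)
The plan is to handle each case by locating the primitive relation that serves as the flopping relation, performing the prescribed star subdivision and blowdown via Sato's \cite[Theorem 4.3]{Sat:00} and \cite[Corollary 4.9]{Sat:00}, and reading off the primitive collections of the resulting fan. A flop is a star subdivision (which raises the Picard rank by one) followed by a blowdown (which lowers it by one), so the flopped variety $X'$ is again a smooth complete toric variety of Picard rank $3$; by \cite{KlSt:91} it is automatically projective, and by Theorem~\ref{thm:rank3classification} it has either three or five primitive collections. Thus the whole statement reduces to deciding which, and the argument runs in close parallel to the proof of Corollary~\ref{cor:flop3PC}, the only new feature being that the ambient fan carries five rather than three primitive collections.

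First I would match each hypothesis to a flopping relation among \eqref{eq:5primitivereln}; since only the first, second, and fourth relations (those of $P_0$, $P_1$, $P_3$) lie on extremal rays of $\overline{\NE}(X)$, these are the only candidates. Under condition (1) the relation of $P_0$ becomes $\sum_{X_0} v + \sum_{X_1} y = \sum_{X_3} t$, under condition (2) the relation of $P_1$ is $\sum_{X_1} y + \sum_{X_2} z = \sum_{X_4} u$, and under condition (3) the relation of $P_3$ is $\sum_{X_3} t + \sum_{X_4} u = \sum_{X_1} y$. In each case the stated numerical identity ($p_0 + p_1 = p_3$, $p_1 + p_2 = p_4$, $p_3 + p_4 = p_1$) is exactly what makes the two sides have equally many unit-coefficient terms, so that the relation has the balanced shape required to define a flop. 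The decisive structural point is that in all three cases the target $\Y$ (the right-hand side) is a \emph{full vertex set} of the pentagon, namely $X_3$, $X_4$, and $X_1$ respectively; note also that $\Y$ always spans a cone, being a proper subset of a single primitive collection.

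Next I would execute the flop and count. Star subdividing $\Cone(\Y)$ introduces the ray $w = \sum_{y \in \Y} y$ and makes $\Y$ a primitive collection; since $\Y = X_j$ is a full vertex set it is a proper subset of exactly the two primitive collections $P_{j-1}$ and $P_j$, so once $\Y$ becomes a non-face these two collections lose minimality and drop out. After the blowdown along the flopping collection (which turns that collection into a cone and so removes it as well), what remains is $\Y$ together with the two primitive collections that are disjoint from $X_j$ and distinct from the flopping collection: three pairwise disjoint primitive collections, that is, a splitting fan. This gives the ``three primitive collections'' conclusion. In the mixed case the flopping relation is again that of $P_0$, but with all $b_i = 0$ and $c_i$ equal to $1$ on $X_2'$ and $0$ elsewhere the target is $\Y = X_2' \cup X_3$, which uses the proper subset $X_2'$ of $X_2$ together with all of $X_3$; it therefore straddles two vertex sets and meets three of the remaining primitive collections nontrivially, exactly as the case $\Y_2, \Y_3 \neq \varnothing$ does in Corollary~\ref{cor:flop3PC}. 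Repeating the subdivision-and-blowdown bookkeeping then leaves five primitive collections — $P_1$, $P_3$, $P_4$, the new collection $\Y$, and one further collection $X_0 \cup X_1 \cup (X_2 \smallsetminus X_2')$ — while $P_0$ and $P_2$ disappear.

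Finally, for the self-flop I would identify $X'$, a rank-$3$ variety with five primitive collections and hence a Batyrev variety, and read off its defining data $(\{p_i'\}, \{b_i'\}, \{c_i'\})$ from the five primitive collections just computed; I would then invoke the isomorphism criterion for Batyrev varieties proved above. The reflection of the pentagon realizing $X' \cong X$ exists precisely when the vertex-set sizes are matched by that symmetry, and the hypothesis $p_0 = p_3$ — which forces $p_1 = p_2'$ because $p_2' + p_3 = p_0 + p_1$ — is exactly this matching. I expect the main obstacle to be the primitive-collection bookkeeping under Sato's two operations: the naive rule ``replace the exceptional ray $w$ by the flopping collection'' can produce non-minimal non-faces, so minimality must be checked by hand (for instance, confirming that $X_0 \cup X_1 \cup (X_2 \smallsetminus X_2')$ is a genuine minimal non-face, and that the old $P_2$ becomes non-minimal by absorbing the new collection $\Y$ rather than surviving). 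Pinning down all the $b_i'$ and $c_i'$ of $X'$ and verifying that a single pentagon symmetry matches them to the data of $X$ is the other delicate step.
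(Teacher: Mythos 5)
Your proposal is correct and takes essentially the same route as the paper, which gives no standalone proof of this corollary but derives it by ``a similar discussion'' to Corollary~\ref{cor:flop3PC}, i.e., by tracking primitive collections through Sato's star subdivision and blowdown exactly as you do. Your bookkeeping is accurate --- in cases (1)--(3) the sets of the form $(Q \smallsetminus \Y) \cup P$ created by the blowdown are indeed non-minimal and drop out, and in the mixed case the five collections you list (with $P_2$ absorbed by $\Y$) are precisely the pentagon structure of the flopped variety --- so your outline is, if anything, more detailed than the paper's own treatment.
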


\section{A converse to Bott vanishing}


Assume $X$ is a (smooth, proper) weak Fano toric variety, so $-K_X$ is nef and big. 
By \cite[Theorem 9.3.3]{CLS:11}, we have $H^p(X,\Omega^1_X \otimes \cO(-K_X)) = 0$ for all $p > 1$. If $X$ is Fano, then $H^1(X,\Omega_X^1 \otimes \cO(-K_X)) = 0$ by the Bott-Steenbrink-Danilov vanishing theorem (see \cite[Theorem 9.3.1]{CLS:11}). In fact, the converse holds: this cohomology group vanishes precisely when $X$ is Fano. We discovered this interesting fact empirically through numerical experimentation with Macaulay2.

\begin{theorem}\label{thm:converseBott}
    Let $X$ be a weak Fano toric variety. If $H^1(X, \Omega_X^1 \otimes \cO(-K_X)) = 0$, then $X$ is Fano.
\end{theorem}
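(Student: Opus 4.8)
The plan is to compute $H^1(X,\Omega^1_X \otimes \cO(-K_X))$ combinatorially through the generalized Euler sequence and show that it is nonzero whenever $X$ fails to be Fano; this contrapositive is exactly the assertion. Write $-K_X = \sum_\rho D_\rho$, the sum over the rays $\rho$ with primitive generators $u_\rho$, and set $s = \rank\Pic(X)$. Twisting the generalized Euler sequence \cite[Theorem 8.1.6]{CLS:11}
\[
0 \to \Omega^1_X \to \bigoplus_\rho \cO_X(-D_\rho) \to \Pic(X) \otimes_\ZZ \cO_X \to 0
\]
by $\cO(-K_X)$ yields an exact sequence with middle term $\bigoplus_\rho \cO_X(-K_X - D_\rho)$ and right term $\cO_X(-K_X)^{\oplus s}$. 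Since $X$ is weak Fano, $-K_X$ is nef, so Demazure vanishing \cite[Theorem 9.2.3]{CLS:11} gives $H^1(X,\cO(-K_X)) = 0$. Everything is $M$-graded, so it suffices to find one $m \in M$ for which the degree-$m$ piece of $H^1(X,\Omega^1_X\otimes\cO(-K_X))$ is nonzero. From the degree-$m$ long exact sequence this group receives an injection from $\coker(\phi_m)$, where
\[
\phi_m\colon \bigoplus_\rho H^0\bigl(\cO(-K_X - D_\rho)\bigr)_m \longrightarrow \bigl(H^0(\cO(-K_X))_m\bigr)^{\oplus s}
\]
is the degree-$m$ part of the Euler map; thus it is enough to produce $m$ with $\coker(\phi_m)\neq 0$.

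The next step is to locate the right lattice point $m$. If $X$ is not Fano, then by Corollary~\ref{cor:wFanocriterion} together with Theorem~\ref{thm:toricconethm} some primitive collection has degree $0$, so $-K_X$ vanishes on a nonzero face of $\overline{\NE}(X)$; this face contains an extremal ray, which is generated by a torus-invariant curve $C\cong\PP^1$ with $-K_X\cdot C = 0$. Such $C$ corresponds to a wall $\sigma\in\Sigma(d-1)$ with $\sigma(1)=\{u_1,\dots,u_{d-1}\}$, contained in two maximal cones $\sigma_+=\sigma+\rho$ and $\sigma_-=\sigma+\rho'$. Because $-K_X$ is Cartier, each maximal cone $\tau$ determines a vertex $m_\tau$ of the polytope $P_{-K_X}$ with $\langle m_\tau, u\rangle = -1$ for all rays $u$ of $\tau$; and $-K_X\cdot C = 0$ says exactly that the edge of $P_{-K_X}$ dual to $\sigma$ has lattice length zero, i.e. $m_{\sigma_+}=m_{\sigma_-}=:m^\ast$. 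Hence $\langle m^\ast, u\rangle = -1$ for every ray in $\sigma_+(1)\cup\sigma_-(1) = \{u_\rho,u_{\rho'},u_1,\dots,u_{d-1}\}$. Since $\sigma_+$ is smooth, $m^\ast\in M$, and $m^\ast\in P_{-K_X}$, so $H^0(\cO(-K_X))_{m^\ast}=k$ and the target of $\phi_{m^\ast}$ is $\Pic(X)\otimes_\ZZ k$.

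The heart of the argument is then a brief linear-algebra computation. As $m^\ast\in P_{-K_X}$, one checks that $H^0(\cO(-K_X - D_\rho))_{m^\ast}=k$ precisely for $\rho\in A:=\{\rho:\langle m^\ast, u_\rho\rangle\geq 0\}$, and on these generators $\phi_{m^\ast}$ is the restriction to $A$ of the presentation map $k^{\rays(\Sigma)}\to\Pic(X)\otimes_\ZZ k$, $e_\rho\mapsto[D_\rho]$. Therefore $\coker(\phi_{m^\ast}) = (\Pic(X)\otimes k)/\langle[D_\rho]:\rho\in A\rangle$, which is nonzero if and only if the classes $\{[D_\rho]:\rho\in A\}$ fail to span. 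I would witness this non-spanning using the curve class $[C]\in N_1(X)$: by the toric intersection formula (see \cite[\S6.3--6.4]{CLS:11}) the only rays $\eta$ with $D_\eta\cdot C\neq 0$ are those in $\sigma_+(1)\cup\sigma_-(1)$, and these all satisfy $\langle m^\ast,u_\eta\rangle = -1$, hence lie outside $A$. Thus $[C]$ pairs to $0$ with every $[D_\rho]$, $\rho\in A$, while $[C]\neq 0$ (indeed $D_\rho\cdot C = 1$), so the classes in $A$ lie in a proper hyperplane and $\coker(\phi_{m^\ast})\neq 0$. This forces $H^1(X,\Omega^1_X\otimes\cO(-K_X))\neq 0$, completing the contrapositive.

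I expect the main obstacle to be the middle step: recognizing the distinguished lattice point $m^\ast$ as the vertex of the anticanonical polytope that degenerates along the contracted curve $C$, and noticing that this very curve class $[C]$ is the linear functional detecting the cokernel. The remaining ingredients — the elementary description of global sections of toric line bundles, Demazure vanishing, and the wall/intersection formulas — are standard, and the potential extension issue (whether $\coker(\phi_{m^\ast})$ survives into $H^1$) does not arise, since it enters as a subobject of the long exact sequence.
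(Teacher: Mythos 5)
Your proposal is correct --- I checked the identification of $\phi_{m^*}$, the description of the index set $A$, and the curve-class witness, and each step holds --- but it executes the argument through a genuinely different decomposition than the paper. Both proofs argue the contrapositive and pivot on the very same lattice point: the shared Cartier datum $m^*$ of two maximal cones $\sigma_+,\sigma_-$ meeting along a wall whose curve $C$ satisfies $-K_X\cdot C=0$ (the paper produces it from \cite[Lemma 6.1.13]{CLS:11}; you produce it from a degree-zero primitive collection and the Mori cone --- your detour through extremal rays could be shortened by writing the degree-zero class as a nonnegative combination of wall-curve classes and using nefness of $-K_X$). The routes diverge in the presentation of $\Omega^1_X$ and in how non-surjectivity is detected. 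The paper twists the first Ishida complex $0 \to \Omega_X^1 \to M \otimes_\ZZ \cO_X \to \bigoplus_{\rho}\cO_{D_\rho} \to 0$ and runs a dimension count in degree $m^*$: the source contributes dimension $d$, while the $d+1$ rays of $\sigma_+(1)\cup\sigma_-(1)$ each contribute a line $H^0(X,\cO_{D_{\rho_i}}(-K_X))_{m^*}$ to the target; establishing those $d+1$ lines is the bulk of the paper's proof and requires a case analysis of $H^0(X,\cO_{D_\rho}(-K_X))_m$ via restriction sequences and the $V_{D,m}$ formalism of \cite[Section 9.1]{CLS:11}. You instead twist the generalized Euler sequence $0 \to \Omega^1_X \to \bigoplus_\rho \cO_X(-D_\rho) \to \Cl(X) \otimes_\ZZ \cO_X \to 0$, the complementary presentation arising from $0 \to M \to \ZZ^{\Sigma(1)} \to \Cl(X) \to 0$, where the degree-$m^*$ sections are immediate ($H^0(X,\cO(-K_X-D_\rho))_{m^*} = k$ if and only if $\langle m^*, u_\rho\rangle \geq 0$), and you detect $\coker(\phi_{m^*})\neq 0$ by pairing with $[C]\in N_1(X)$: every $[D_\rho]$ with $\rho\in A$ is orthogonal to $[C]$, while $D_\rho\cdot C=1$ for the ray completing the wall, so these classes lie in a proper subspace of $\Pic(X)\otimes k$. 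What your route buys is the elimination of the paper's case analysis and a visibly characteristic-free final step; what the paper's route buys is that it never needs the wall-curve intersection formulas, only computations of sections. Two cosmetic points: your appeal to Demazure vanishing is superfluous, since (as you yourself note) $\coker(\phi_{m^*})$ injects into $H^1$ by exactness alone; and $m^*\in M$ holds simply because $-K_X$ is Cartier, with no separate appeal to smoothness of $\sigma_+$ needed.
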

\begin{proof}
    Assume $X$ is a smooth projective toric variety of dimension $d$ which is weak Fano, and let $\Sigma \subseteq N_{\RR} = \RR^d$ be its associated complete fan.
    The anticanonical divisor $-K_X = \sum_{\rho \in \Sigma(1)} D_\rho$ is nef (and it is always big for toric varieties).
    By the first Ishida complex, the cotangent sheaf satisfies the exact sequence
    \[0 \to \Omega_X^1 \to M \otimes_\ZZ \cO_X \to \bigoplus_{\rho \in \Sigma(1)}\cO_{D_\rho} \to 0.\]
    Twisting the sequence by $\cO_X(-K_X)$ and taking long exact sequence on cohomology we have
    \begin{align*}
        0 & \to H^0(X,\Omega_X^1(-K_X)) \to H^0(X,M\otimes_\ZZ \cO_X(-K_X)) \stackrel{\phi}{\to} \bigoplus_{\rho \in \Sigma(1)}H^0(X,\cO_{D_{\rho}}(-K_X)) \\
        & \to H^1(X,\Omega_X^1(-K_X)) \to M \otimes_\ZZ H^1(X, \cO_X(-K_X)) = 0
    \end{align*}
    since $H^1(X, \cO_X(-K_X)) = 0$ by Demazure vanishing. 
    Let's first consider $H^0(X,\cO_{D_\rho}(-K_X))$ for any fixed ray $\rho \in \Sigma(1)$. 
    By the exact sequence $0 \to \cO_X(-D_\rho) \to \cO_X \to \cO_{D_\rho} \to 0$ we have
    \begin{align*}
        0 & \to H^0(X,\cO(-K_X - D_{\rho})) \to H^0(X,\cO_X(-K_X)) \to H^0(X,\cO_{D_\rho}(-K_X)) \\ 
        & \to H^1(X, \cO_X(-K_X - D_{\rho})) \to 0.
    \end{align*}
    By \cite[Exercise 9.1.13]{CLS:11}, the graded piece for $m \in M$ satisfies the property that $H^0(X,\cO_X(-K_X - D_\rho))_m \neq 0$ implies that $H^1(X, \cO_X(-K_X - D_\rho))_m = 0$. 
    Recall that in \cite[Section 9.1]{CLS:11}, for any Weil divisor $D = \sum_{\rho \in \Sigma(1)} a_\rho D_{\rho}$, we denote by 
    \[V_{D,m} = \bigcup_{\sigma \in \Sigma} \Conv \left(u_\rho \mid \rho \in \sigma(1), \langle m,u_\rho \rangle < -a_\rho \right),\]
    where $u_\rho$ is the primitive generator of the ray $\rho$.
    In particular, using \cite[Theorem 9.1.3]{CLS:11} of computing sheaf cohomology via reduced cohomology, we derive the following cases for $m \in M$ and the corresponding $m$-graded pieces:
    \begin{enumerate}[label = (\roman*)]
        \item If $H^0(X,\cO_X(-K_X - D_\rho))_m \neq 0$. 
        This is equivalent to $V_{-K_X - D_\rho,m} = \varnothing$ which implies $V_{-K_X,m} = \varnothing$. 
        Thus $H^0(X,\cO_X(-K_X))_m \cong \CC$.
        Together with $H^1(X, \cO_X(-K_X - D_\rho))_m = 0$, we have $H^0(X, \cO_{D_\rho}(-K_X))_m = 0$.

        \item If $H^1(X,\cO_X(-K_X - D_\rho))_m \neq 0$.
        It implies that $H^0(X,\cO_X(-K_X - D_\rho))_m = 0$, which is equivalent to $V_{-K_X - D_\rho,m} \neq \varnothing$.
        Thus for such $m \in M$, the exact sequence
        \[0 \to H^0(X,\cO_X(-K_X))_m \to H^0(X,O_{D_\rho}(-K_X))_m \to H^1(X, \cO_X(-K_X - D_\rho))_m \to 0\]
        forces $H^0(X,\cO_{D_\rho}(-K_X))_m \cong H^1(X,\cO_X(-K_X - D_\rho))_m \cong \CC$.       
        
        \item If $H^0(X,\cO_X(-K_X - D_\rho))_m \cong H^1(X,\cO_X(-K_X - D_\rho))_m = 0$.
        Since $V_{-K_X - D_\rho,m} \neq \varnothing$, either there exists a ray $\rho' \neq \rho$ such that $\langle m,u_{\rho'} \rangle < -1$ or $\langle m,u_\rho \rangle < 0$. Therefore
        \[H^0(X, \cO_{D_\rho}(-K_X))_m = \begin{cases}
            \CC &\quad\text{if $\langle m,u_\rho \rangle = -1$ and $\langle m, u_{\rho'}\rangle \geq -1$ for all $\rho' \neq \rho$,} \\
            0 &\quad\text{otherwise.} 
        \end{cases}\]
    \end{enumerate}
    
    Now let's turn our attention to the case when $X$ is weak Fano but not Fano.
    By \cite[Lemma 6.1.13]{CLS:11}, it implies that there are two maximal cones $\sigma, \sigma' \in \Sigma(d)$ with intersection $\sigma \cap \sigma' \in \Sigma(d - 1)$ a wall, such that $m_\sigma = m_{\sigma'} \in M$. 
    Let $m = m_\sigma = m_{\sigma'}$ be the common Cartier data in $M$.
    Firstly $H^0(X,\cO_X(-K_X))_m \cong \CC$ since $m \in P_{-K_X}$.
    Let $(\sigma \cap \sigma')(1) = \{\rho_1,\cdots, \rho_{d - 1}\}$, and consider $\sigma(1) = (\sigma \cap \sigma')(1) \cup \{\rho_d\}$ and $\sigma'(1) = (\sigma \cap \sigma')(1) \cup \{\rho_{d + 1}\}$. 
    Then for any $i = 1,\cdots, d+1$, we have $\langle m, u_{\rho_i} \rangle = -1$ and thus $V_{-K_X - D_{\rho_i},m} \neq \varnothing$. 
    It means that $H^0(X,\cO_{X}(-K_X - D_{\rho_i}))_m = 0$. 
    That $H^0(X,\cO_X(-K_X))_m \cong \CC$ forces $H^1(X,\cO_X(-K_X - D_{\rho_i}))_m = 0$, and hence $H^0(X,\cO_{D_{\rho_i}}(-K_X))_m \cong \CC$ for all $i = 1,\cdots,d+1$.
    So that $\phi$ fails to be surjective on $m$-graded pieces for $m = m_\sigma = m_{\sigma'}$ which implies that $H^1(X,\Omega_X^1(-K_X))_m \neq 0$.
\end{proof}

In the case that $X$ is of Picard rank $3$, the proof can be simplified by the following fact. 

\begin{proposition}
Let $X = X(\Sigma)$ be a weak Fano toric variety of Picard rank $3$. Then 
\[H^i(X, \cO_X(-K_X - D_\rho)) = 0\]
for any $i \geq 1$ and ray $\rho \in \Sigma(1)$.  
\end{proposition}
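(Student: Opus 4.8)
The plan is to combine a short cohomological reduction with a combinatorial analysis of the sets $V_{D,m}$ from the proof of Theorem~\ref{thm:converseBott}. Writing $D = -K_X - D_\rho = \sum_{\rho' \neq \rho} D_{\rho'}$, I would first dispose of the degrees $i \geq 2$ uniformly, without using the Picard rank. From the restriction sequence $0 \to \cO_X(-K_X - D_\rho) \to \cO_X(-K_X) \to \cO_{D_\rho}(-K_X) \to 0$ and the fact that $H^i(X, \cO_X(-K_X)) = 0$ for $i \geq 1$ by Demazure vanishing ($-K_X$ is nef), the long exact sequence yields isomorphisms $H^i(X, \cO_X(-K_X - D_\rho)) \cong H^{i-1}(D_\rho, \cO_{D_\rho}(-K_X))$ for $i \geq 2$. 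Now $D_\rho$ is a smooth complete toric variety of dimension $d-1$, and $(-K_X)|_{D_\rho}$ is a nef Cartier divisor on it (the restriction of a nef divisor is nef); so Demazure vanishing applied to $D_\rho$ kills $H^{i-1}(D_\rho, \cO_{D_\rho}(-K_X))$ for $i-1 \geq 1$. This settles all $i \geq 2$ and reduces the proposition to proving $H^1(X, \cO_X(-K_X - D_\rho)) = 0$.

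For the remaining case I would use \cite[Theorem 9.1.3]{CLS:11} exactly as in the proof of Theorem~\ref{thm:converseBott}, giving $H^1(X, \cO_X(-K_X - D_\rho))_m \cong \widetilde{H}^0(V_{-K_X - D_\rho, m}; \CC)$, so it suffices to show that for every $m \in M$ the set $V_m \colonequals V_{-K_X - D_\rho, m}$ is connected or empty. Let $A \subseteq \Sigma(1)$ be the set of \emph{active} rays: those $\rho' \neq \rho$ with $\langle m, u_{\rho'}\rangle \leq -2$, together with $\rho$ itself when $\langle m, u_\rho\rangle \leq -1$. Then $V_m$ is the geometric realization of the simplicial complex $\Delta_A$ on vertex set $A$ whose faces are the subsets of $A$ spanning a cone of $\Sigma$, i.e.\ containing no primitive collection; its minimal non-faces are exactly the primitive collections contained in $A$. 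Thus $V_m$ is connected iff the graph $G_A$ (the complete graph on $A$ with the size-$2$ primitive collections contained in $A$ deleted) is connected, and it is automatically connected — indeed contractible — as soon as some active ray lies in no fully active primitive collection, since then $\Delta_A$ is a cone.

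The heart of the argument is two observations fed by the weak Fano hypothesis and the classification (Theorem~\ref{thm:rank3classification}) into splitting fans (three primitive collections) and Batyrev varieties (five). First, a primitive collection with primitive relation $\sum_{x \in P} x = 0$ can never be fully active, because pairing with $m$ would give $0 = \sum_{x\in P}\langle m, x\rangle$ with every summand strictly negative; this rules out the ``fibre'' collection $\Z$ in the splitting case and the collection $P_2$ in the Batyrev case. Second, if $P$ is fully active but the rays $y_j$ of its associated cone $\sigma(P)$ are all inactive, then $\sum_{x\in P}\langle m, x\rangle = \sum_j a_j \langle m, y_j\rangle$ forces $-2|P| + 1 \geq -\sum_j a_j$, whence $\deg P \leq 1 - |P| < 0$, contradicting weak Fano. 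Granting these, one checks that $V_m$ disconnected would force (after discarding the harmless cone-point cases) that $A$ is a union of full blocks with no isolated index; running through the finitely many such patterns, every one either triggers the zero-sum obstruction, or contains an active primitive collection with inactive cone rays, or — in the two ``quadruple'' Batyrev patterns omitting a single block — has $G_A$ visibly connected.

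I expect the main obstacle to be the bookkeeping in the five-primitive-collection (Batyrev) case: because the collections $P_i = X_i \cup X_{i+1}$ overlap around the pentagon, one must track which blocks $X_t$ are active and match each admissible active pattern to one of the two obstructions above, while separately verifying by hand that the two quadruple patterns give a connected (rather than disconnected) $G_A$. The splitting-fan case is comparatively easy, since the three primitive collections are disjoint and the only active patterns to consider are $\X$, $\Y$, and $\X \sqcup \Y$, each excluded by the degree obstruction.
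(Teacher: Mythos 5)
Your proposal is correct, but it proves the proposition by a genuinely different route than the paper. The paper's proof invokes Musta\c{t}\u{a}'s vanishing theorem \cite[Theorem 2.4(ii)]{Mus:02}: for each ray $\rho$ it exhibits an explicit $\QQ$-divisor $E = \sum_j a_j D_j$ with $a_j \in [0,1] \cap \QQ$ such that $-K_X - D_\rho + E$ is $\QQ$-ample, doing a case analysis on which block of the classification (Batyrev blocks $X_0,\dots,X_4$, or $\X$, $\Y$, $\Z$ in the projective-bundle case) contains $u_\rho$; all degrees $i \geq 1$ then vanish at once. You instead split the problem: for $i \geq 2$ the restriction sequence plus Demazure vanishing on $X$ and on the smooth complete toric divisor $D_\rho$ gives the vanishing with no appeal to Picard rank or to the classification at all (a genuine gain in generality over the paper's argument, which is rank-3-specific throughout); for $i = 1$ you run the graded-piece computation via \cite[Theorem 9.1.3]{CLS:11} --- the same tool the paper uses in the proof of Theorem~\ref{thm:converseBott} --- and reduce to connectivity of $V_{-K_X - D_\rho, m}$. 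Your two obstructions are both correct: a zero-sum primitive collection can never be fully active, and a fully active collection whose cone $\sigma(P)$ has only inactive rays forces $\deg P \leq 1 - \lvert P \rvert < 0$, contradicting weak Fano (the arithmetic $-2\lvert P\rvert + 1 \geq -\sum_j a_j$ checks out, including the shift for $u_\rho$). I verified your deferred bookkeeping: disconnectedness forces the active set to be a union of full blocks with no isolated index, which in the splitting case leaves only $\X$, $\Y$, $\X \sqcup \Y$ (all killed by the degree obstruction), and in the Batyrev case leaves four pairs, three triples, and the two quadruples $\{0,1,3,4\}$ and $\{0,1,2,4\}$; the pairs and triples are killed by one of the two obstructions, and for the quadruples the missing edges of $G_A$ lie only between adjacent singleton blocks, so the complete bipartite edges between non-adjacent blocks make $G_A$ connected. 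What each approach buys: the paper's is shorter modulo a Kawamata--Viehweg-type black box and the (asserted but unverified in print) ampleness checks; yours is more elementary and self-contained, reuses the machinery of Theorem~\ref{thm:converseBott}, and isolates exactly where the weak Fano hypothesis enters (the degree obstruction). If you write it up, the only part demanding care is the pentagon bookkeeping you already identified.
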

\begin{proof}
    Let's consider separately two cases: $X$ is a Batyrev variety and $X$ is a projective bundle.
    We denote by $D = -K_X - D_\rho$.
    First assume that $X$ is a Batyrev variety with five primitive collections~\eqref{eq:5primitivereln} denoted by $P_0,\cdots,P_4$.
    It is easy to see that $\deg P_1$ and $\deg P_3$ cannot be zero simultaneously: if so, then $p_2 + p_3 = 0$ which is a contradiction.
    Thus either $\deg P_1 > 0$ or $\deg P_3 > 0$.
    To prove the proposition, by \cite[Theorem 2.4(ii)]{Mus:02}, it suffices to find a $\QQ$-divisor $E = \sum_j a_jD_j$ on $X$ where $a_j \in [0,1] \cap \QQ$ such that $D + E$ is $\QQ$-ample.
    If the primitive generator $u_\rho \in X_2$ or $X_3$, we can take $E = D_\rho + D_{v_1} + \frac{1}{2}D_{y_1}$ if $\deg P_3 > 0$, and $E = D_\rho + D_{v_1} + \frac{1}{2}D_{u_1}$ if $\deg P_1 > 0$. 
    If $u_\rho \in X_0$, we take $E = D_\rho + \frac{1}{2}D_{y_1} + \frac{1}{4}D_{u_1}$ if $\deg P_3 > 0$ and $E = D_\rho + \frac{1}{2}D_{y_1} + \frac{3}{4}D_{u_1}$ if $\deg P_1 > 0$.
    If $u_\rho \in X_1$, we take $E = D_\rho + D_{v_1} + D_{z_1}$ if $\deg P_3 > 0$ and $E = D_\rho + D_{v_1} + \frac{1}{2}D_{u_1}$ if $\deg P_1 > 0$.
    If $u_\rho \in X_4$, we take $E = D_\rho + D_{v_1} + D_{z_1}$ if $\deg P_3 > 0$ and $E = D_\rho + D_{v_1} + \frac{1}{2(1 + b_i)}D_{t_i}$ if $\deg P_1 > 0$ for any $t_i \in X_3$.
    Now assume that $X$ is a projective bundle with three primitive collections stated in Proposition \ref{prop:3-primitiverelns}. 
    If $u_\rho \in \Z$, we take $E = D_\rho + D_{y_1} + D_{x_{d' - r + 1}}$. 
    Likewise if $u_\rho \in \Y$, we take $E = D_\rho + D_{y} + D_{x_{d' - r + 1}}$ for any $y \in \Y \smallsetminus \{u_\rho\}$.
    Finally if $u_\rho \in \X$, we take $E = D_{\rho} + D_{y_1} + \frac{1}{a_{d'-r} + 1}D_{x_1}$ if $u_\rho = x_{d' - r + 1}$ and $E = D_\rho + D_{y_1} + D_{x_{d' - r + 1}}$ otherwise.
    It can be easily checked that by choosing $E$ as above, $D + E$ can be made $\QQ$-ample. Therefore we obtain the vanishing of $H^i(X, \cO_X(-K_X - D_\rho))$ for any $i > 0$ and $\rho \in \Sigma(1)$.
\end{proof}

\begin{remark}
We can also derive Theorem \ref{thm:converseBott} from a combinatorial formula of Mavlyutov.
Notice that the anticanonical divisor $-K_X$ of $X$ is $d$-semiample in the sense of \cite{Mav:08}.
According to \cite[Corollary 2.7]{Mav:08}, we have
\begin{align}
    \dim H^1(X, \Omega_X^1 \otimes \cO_X(-K_X)) &= \sum_{\Gamma \preccurlyeq P_{-K_X}}l^*(\Gamma)\sum_{j = 0}^1{d - \dim \Gamma - j \choose 1 - j} (-1)^{1 - j}\left|\Sigma_{\sigma_\Gamma(j)}(1)\right| \notag \\
    &= \sum_{\Gamma \preccurlyeq P_{-K_X}} l^*(\Gamma)(\left|\Sigma_{\sigma_\Gamma}(1)\right| - d + \dim \Gamma) 
\end{align}
where the summation is over all faces $\Gamma$ of the polytope $P_{-K_X}$, and 
\begin{itemize}
    \item $l^*(\Gamma)$ is the number of interior lattice points inside the face $\Gamma$. Note that the lattice $N$ is equipped with the subspace topology from the standard topology on the Euclidean space $N_\RR$. Thus if $\Gamma \preccurlyeq P_{-K_X}$ is a vertex, $l^*(\Gamma) = 1$;
    \item $\sigma_\Gamma \subseteq N_\RR$ is the cone in the fan $\Sigma_{-K_X}$ which corresponds to the face $\Gamma$. Explicitly by duality, the dual cone $\sigma_\Gamma$ is defined as
    \[\sigma_\Gamma = \Cone(\{u_F \mid \text{ $F$ is any facet of $P_{-K_X}$ which contains $\Gamma$}\}),\]
    and $\Sigma_{\sigma_\Gamma} = \{\tau \in \Sigma \mid \widetilde{\pi}(\tau) \subseteq \sigma_\Gamma\}$ is a subfan of $\Sigma$;
    \item $\widetilde{\pi} : N_\RR \to N_{-K_X} \otimes_\ZZ \RR$ is the map defined in \cite[Theorem 2.2]{Mav:08} which maps the fan $\Sigma$ to the induced fan $\Sigma_{-K_X}$. Since the anticanonical divisor $-K_X$ on a toric variety is always big, the surjective homomorphism $\widetilde{\pi}$ is also injective. 
\end{itemize}
If $X$ is not Fano, \ie $-K_X$ is nef but not ample, then the support function $\varphi_{-K_X}$ is only convex but not strictly convex.
As in the proof of Theorem~\ref{thm:converseBott}, this is equivalent to the existence of two maximal cones $\sigma, \sigma' \in \Sigma(d)$ such that their Cartier data satisfy $m_\sigma = m_{\sigma'}$. 
Since $-K_X$ is basepoint-free there is a vertex $v \in M$ of the polytope $P_{-K_X}$ with
\[\left|\Sigma_{\sigma_v}(1)\right| - d + \dim v = \left|\Sigma_{\sigma_v}(1)\right| - d > 0,\]
thus $\dim H^1(X, \Omega_X^1(-K_X)) > 0$.
\end{remark}

\bibliographystyle{alpha}

\begin{thebibliography}{{\O}br07b}

\bibitem[Bat91]{Bat:91}
Victor~V. Batyrev.
\newblock On the classification of smooth projective toric varieties.
\newblock {\em Tohoku Math. J. (2)}, 43(4):569--585, 1991.

\bibitem[Bat99]{Bat:99}
V.~V. Batyrev.
\newblock On the classification of toric {F}ano {$4$}-folds.
\newblock volume~94, pages 1021--1050. 1999.
\newblock Algebraic geometry, 9.

\bibitem[Bel25]{fanography}
Pieter Belmans.
\newblock Fanography.
\newblock \url{https://fanography.info}, 2025.

\bibitem[Ber03]{Ber:03}
Florian Berchtold.
\newblock Lifting of morphisms to quotient presentations.
\newblock {\em Manuscripta Math.}, 110(1):33--44, 2003.

\bibitem[CLS11]{CLS:11}
David~A Cox, John~B Little, and Henry~K Schenck.
\newblock {\em Toric varieties}, volume 124.
\newblock American Mathematical Soc., 2011.

\bibitem[CM13]{CuMa:13}
Joseph Cutrone and Nicholas Marshburn.
\newblock Towards the classification of weak {F}ano threefolds with $\rho$= 2.
\newblock {\em Open Mathematics}, 11(9):1552--1576, 2013.

\bibitem[CM24]{CuMa:24}
Joseph Cutrone and Nicholas Marshburn.
\newblock An update on the classification of rank 2 weak {F}ano threefolds.
\newblock 2024.

\bibitem[GS]{M2}
Daniel~R. Grayson and Michael~E. Stillman.
\newblock Macaulay2, a software system for research in algebraic geometry.
\newblock Available at \url{http://www2.macaulay2.com}.

\bibitem[HJ25]{weakFano-repo}
Zhengning Hu and Rohan Joshi.
\newblock {Weak-Fanos}.
\newblock \url{https://github.com/rohansjoshi/Weak-Fanos}, 2025.

\bibitem[Isk77]{Isk:77}
V.~A. Iskovskih.
\newblock Fano threefolds. {I}.
\newblock {\em Izv. Akad. Nauk SSSR Ser. Mat.}, 41(3):516--562, 717, 1977.

\bibitem[Isk78]{Isk:78}
V.~A. Iskovskih.
\newblock Fano threefolds. {II}.
\newblock {\em Izv. Akad. Nauk SSSR Ser. Mat.}, 42(3):506--549, 1978.

\bibitem[Kle88]{Kle:88}
Peter Kleinschmidt.
\newblock A classification of toric varieties with few generators.
\newblock {\em Aequationes Math.}, 35(2-3):254--266, 1988.

\bibitem[KS91]{KlSt:91}
Peter Kleinschmidt and Bernd Sturmfels.
\newblock Smooth toric varieties with small {P}icard number are projective.
\newblock {\em Topology}, 30(2):289--299, 1991.

\bibitem[Mav08]{Mav:08}
Anvar~R. Mavlyutov.
\newblock Cohomology of rational forms and a vanishing theorem on toric
  varieties.
\newblock {\em J. Reine Angew. Math.}, 615:45--58, 2008.

\bibitem[MM86]{MoMu:86}
Shigefumi Mori and Shigeru Mukai.
\newblock Classification of {F}ano {$3$}-folds with {$B_2\geq 2$}. {I}.
\newblock In {\em Algebraic and topological theories ({K}inosaki, 1984)}, pages
  496--545. Kinokuniya, Tokyo, 1986.

\bibitem[Mus02]{Mus:02}
Mircea Musta\c{t}\u{a}.
\newblock Vanishing theorems on toric varieties.
\newblock {\em Tohoku Math. J. (2)}, 54(3):451--470, 2002.

\bibitem[{\O}br07a]{Obr:07}
Mikkel {\O}bro.
\newblock An algorithm for the classification of smooth {F}ano polytopes.
\newblock April 2007.

\bibitem[{\O}br07b]{ObroThesis}
Mikkel {\O}bro.
\newblock {\em Classification of smooth {F}ano polytopes}.
\newblock PhD thesis, University of Aarhus, September 2007.

\bibitem[Rei83]{Rei:83}
Miles Reid.
\newblock Decomposition of toric morphisms.
\newblock In {\em Arithmetic and geometry, {V}ol. {II}}, volume~36 of {\em
  Progr. Math.}, pages 395--418. Birkh\"auser Boston, Boston, MA, 1983.

\bibitem[Sat00]{Sat:00}
Hiroshi Sato.
\newblock Toward the classification of higher-dimensional toric {F}ano
  varieties.
\newblock {\em Tohoku Math. J. (2)}, 52(3):383--413, 2000.

\bibitem[Sat02a]{Sat:02}
Hiroshi Sato.
\newblock The classification of smooth toric weakened {F}ano 3-folds.
\newblock {\em manuscripta mathematica}, 109:73--84, 2002.

\bibitem[Sat02b]{Sat:thesis}
Hiroshi Sato.
\newblock {\em Studies on toric {F}ano varieties}, volume~23 of {\em Tohoku
  Mathematical Publications}.
\newblock Tohoku University, Mathematical Institute, Sendai, 2002.

\bibitem[Sat21]{Sat:21}
Hiroshi Sato.
\newblock A sufficient condition for a toric weak {F}ano 4-fold to be deformed
  to a {F}ano manifold.
\newblock {\em J. Korean Math. Soc.}, 58(5):1081--1107, 2021.

\bibitem[Smi]{NormalToricVarietiesSource}
Gregory~G. Smith.
\newblock {NormalToricVarieties: routines for working with normal toric
  varieties and related objects. Version~1.9}.
\newblock A \emph{Macaulay2} package available at
  \url{https://github.com/Macaulay2/M2/tree/master/M2/Macaulay2/packages}.

\bibitem[SW90]{SzWi:90}
Micha{\l} Szurek and Jaros{\l}aw Wi{\'s}niewski.
\newblock {F}ano bundles over {$P^3$} and {$Q_3$}.
\newblock {\em Pacific journal of mathematics}, 141(1):197--208, 1990.

\end{thebibliography}

\appendix

\section{Table of toric weak Fano threefolds of $\rho = 2$} \label{app:2Picard3Fold}

In the following table we give a complete list of weak Fano toric threefolds of Picard rank $2$ tabulating some numerical invariants (Chern numbers, space of vector fields). Note that they all have two primitive collections. They are all projective bundles over a projective space.

\begin{table}[h!]
\begin{minipage}{\textwidth}     
\centering\small
\begin{tabularx}{\textwidth}{|c|c|c|c|X|}
    \hline
	Fanography\footnote{For Fano varieties, this is the ID from Fanography \cite{fanography}.}  & \texttt{kleinschmidt}\footnote{This is the input to construct the variety with the \texttt{kleinschmidt} function in \texttt{NormalToricVarieties}.} & $c_1^3$ & $h^0(X, T_X)$ & Blowdown and flop \\ \hline
    2-34 & $(3, \{0\})$ & 54 & 11 & $\PP^1 \times \PP^2$ \\ \hline
    2-35 & $(3, \{1\})$ &  56 & 12 & Blowup of $\PP^3$ at a point \\ \hline
    2-36 & $(3, \{2\})$ &  62 & 15 & Blowup of $\PP(1,1,1,2)$ (weak Fano) at a point \\ \hline
    2-33 & $(3, \{0,1\})$ & 54 & 11 & Blowup of $\PP^3$ along a line \\ \hline\hline
    & $(3, \{3\})$ & 72 & 19 & Blowup of $\PP(1,1,1,3)$ (Fano) at a point \\ \hline
    & $(3, \{0,2\})$ & 54 & 13 & Blowup of $\PP(1,1,2,2)$ (Fano) along a line \\ \hline
    & $(3, \{1,1\})$ & 54 & 11 & Self-flopped \\ \hline
\end{tabularx}
\end{minipage}
\end{table}

\section{Table of toric weak Fano fourfolds of $\rho = 2$}\label{app:2Picard4Fold}

In the following table we give a complete list of toric weak Fano fourfolds of Picard rank $2$ tabulating some numerical invariants. Again, they all have $2$ primitive collections, and projective bundle structure.

\begin{table}[h!]
\begin{minipage}{\textwidth}     
\centering\small
\begin{tabularx}{\textwidth}{|c|c|c|c|X|}
    \hline
	Batyrev/Sato \footnote{For Fano varieties, this is the ID from Batyrev or Sato's list \cite{Bat:99, Sat:thesis}.} & \texttt{kleinschmidt}  & $c_1^4$ & $c_1^2c_2$ & Blowdown and flop \\ \hline
    $B_4$ & $(4, \{0\})$ & 512 & 224 & $\PP^1 \times \PP^3$ \\ \hline
    $B_3$ & $(4, \{1\})$ & 544 & 232 & Blowup of $\PP^4$ at a point \\ \hline
    $B_2$ & $(4, \{2\})$ & 640 & 256 & Blowup of $\PP(1,1,1,1,2)$ (Fano) at a point \\ \hline
    $B_1$ & $(4, \{3\})$ & 800 & 296 & Blowup of $\PP(1,1,1,1,3)$ (weak Fano) at a point \\ \hline
    $C_4$ & $(4, \{0,0\})$ & 486 & 216 & $\PP^2 \times \PP^2$ \\ \hline
    $C_2$ & $(4, \{0,1\})$ & 513 & 222 & Blowup of $\PP^4$ along a line \\ \hline
    $C_1$ & $(4, \{0,2\})$ & 594 & 240 & Blowup of $\PP(1,1,1,2,2)$ (weak Fano) along a line \\ \hline
    $C_3$ & $(4, \{1,1\})$ & 513 & 222 & Flopped to $\PP_{\PP^1}(\cO \oplus \cO(1)^{\oplus 3})$ (not weak Fano) \\ \hline
    $B_5$ & $(4, \{0,0,1\})$ & 512 & 224 & Blowup $\PP^4$ along a surface $\PP^2$ \\ \hline \hline
    & $(4, \{4\})$ & 1024 & 352 & Blowup of $\PP(1,1,1,1,4)$ (Fano) at a point \\ \hline
    & $(4, \{0,3\})$ & 729 & 270 & Blowup of $\PP(1,1,1,3,3)$ (Fano) along a line \\ \hline
    & $(4, \{1,2\})$ & 567 & 234 &  \\ \hline
    & $(4, \{0,0,2\})$ & 512 & 224 & Blowup of $\PP(1,1,2,2,2)$ (Fano) along a surface $\PP^2$ \\ \hline
    & $(4, \{0,1,1\})$ & 512 & 224 & Self-flopped \\ 
    \hline
\end{tabularx}
\end{minipage}
\end{table}

\newpage
\section{Table of toric weak Fano threefolds of $\rho = 3$}\label{app:3Picard3Fold}

In the following table we give a complete list of weak Fano toric threefolds of Picard rank $3$ tabulating some numerical invariants. 
We use the notations 3.PB and 3.BC for the varieties that are projective bundles and the result of the Batyrev construction, respectively.

\begin{table}[h!]
    \begin{minipage}{\textwidth}  
    \centering\small
    \begin{tabularx}{\textwidth}{|c|c|c|c|X|}
    \hline
	    ID (Fanography) & \texttt{projectiveBundleConstructor}  \footnote{\texttt{projectiveBundleConstructor} and \texttt{batyrevConstructor} can be found in the code repository associated to this paper \cite{weakFano-repo}.} & $c_1^3$ & $h^0(X, T_X)$ & Blowdown and flop \\ \hline

    3.PB-1 (3-28) & $(2, \{0\},\ \{\{0, -1\}\})$ & 48 & 9 & $\PP^1 \times \FF_1$; Blowup $\PP^1 \times \PP^2$ along a line \\ \hline
    3.PB-2 (3-27) & $(2, \{0\},\ \{\{0, 0\}\})$ & 48 & 9 & $\PP^1 \times \PP^1 \times \PP^1$ \\ \hline
    3.PB-3 (3-25) & $(2, \{0\},\ \{\{1, -1\}\})$ & 44 & 7 & Blowup $X_3(0,1)$ along a line \\ \hline
    3.PB-4 (3-31) & $(2, \{0\},\ \{\{1, 1\}\})$ & 52 & 11 & Blowup $X_3(1,1)$ along a line \\ \hline
    3.PB-5 (3-30) & $(2, \{1\},\ \{\{1, 0\}\})$ & 50 & 10 & Blowup $X_3(0,1)$ or $\text{Bl}_{p} \PP^3$ along a line \\ \hline \hline
    3.PB-6 & $(2, \{0\},\ \{\{0, -2\}\})$ & 48 & 10 & $\PP^1 \times \FF_2$ \\ \hline
    3.PB-7 & $(2, \{0\},\ \{\{1, -2\}\})$ & 40 & 7 & Blowup $X_3(0,2)$ along a line \\ \hline
    3.PB-8 & $(2, \{0\},\ \{\{1, 2\}\})$ & 56 & 13 &  \\ \hline
    3.PB-9 & $(2, \{0\},\ \{\{2, -2\}\})$ & 32 & 7 &  \\ \hline
    3.PB-10 & $(2, \{0\},\ \{\{2, 2\}\})$ & 64 & 16 &  \\ \hline
    3.PB-11 & $(2, \{1\},\ \{\{0, -1\}\})$ & 48 & 9 & Flopped to 3.BC-7 \\ \hline
    3.PB-12 & $(2, \{1\},\ \{\{1, -1\}\})$ & 46 & 8 & Blowup $X_3(1,1)$ along a line; Flopped to 3.BC-3 \\ \hline
    3.PB-13 & $(2, \{1\},\ \{\{1, 1\}\})$ & 54 & 12 & Flopped to 3.BC-8 \\ \hline
    3.PB-14 & $(2, \{1\},\ \{\{2, -1\}\})$ & 48 & 10 & Flopped to 3.BC-4 \\ \hline
    3.PB-15 & $(2, \{1\},\ \{\{2, 0\}\})$ & 56 & 13 & Blowup $X_3(2)$ along a line \\ \hline
    3.PB-16 & $(2, \{1\},\ \{\{2, 1\}\})$ & 64 & 16 & Flopped to 3.BC-9 \\ \hline
    3.PB-17 & $(2, \{2\},\ \{\{1, 0\}\})$ & 52 & 12 & Blowup $X_3(0,2)$ along a line \\ \hline
    3.PB-18 & $(2, \{2\},\ \{\{2, 0\}\})$ & 64 & 17 &  \\ 
    \hline    \hline
    ID (Fanography) & \texttt{batyrevConstructor} & $c_1^3$ & $h^0(X, T_X)$ & Blowdown and flop \\ \hline
    3.BC-1 (3-26) & $(\{1,1,2,1,1\},\{0\},\{0\})$ & 46 & 8 & Blowup $X_3(0,1)$ at a point, or $\PP^1 \times \PP^2$ along a line \\ \hline
    3.BC-2 (3-29) & $(\{2,1,1,1,1\},\{1\},\{\})$ & 50 & 10 &  Blowup $\text{Bl}_{p} \PP^3$ or $X_3(2)$ along a line \\ \hline \hline
    3.BC-3 & $(\{1, 1, 1, 1, 2\}, \{0\}, \{\})$ & 46 & 8 & Blowup $\PP^1 \times \PP^2$ at a point, or $X_3(1,1)$ along a line; Flopped to 3.PB-12 \\ \hline
    3.BC-4 & $(\{1, 1, 1, 1, 2\}, \{1\}, \{\})$ & 48 & 10 & Blowup $\text{Bl}_{p} \PP^3$ at a point; Flopped to 3.PB-14 \\ \hline
    3.BC-5 & $(\{1, 1, 2, 1, 1\}, \{0\}, \{1\})$ & 46 & 8 & Blowup $X_3(1,1)$ at a point, or $X_3(0,1)$ along a line; Self-flopped \\ \hline
    3.BC-6 & $(\{1, 1, 2, 1, 1\}, \{1\}, \{0\})$ & 46 & 10 & Blowup $X_3(0,2)$ at a point, or $X_3(0,1)$ along a line \\ \hline
    3.BC-7 & $(\{1, 2, 1, 1, 1\}, \{0\}, \{\})$ & 48 & 9 & Blowup $\text{Bl}_{p} \PP^3$ at a point; Flopped to 3.PB-11 \\ \hline
    3.BC-8 & $(\{1, 2, 1, 1, 1\}, \{1\}, \{\})$ & 54 & 12 & Blowup $X_3(2)$ at a point; Flopped to 3.PB-13 \\ \hline
    3.BC-9 & $(\{1, 2, 1, 1, 1\}, \{2\}, \{\})$ & 64 & 16 & Blowup $X_3(0,1)$ at a point; Flopped to 3.PB-16 \\ \hline
    3.BC-10 & $(\{2, 1, 1, 1, 1\}, \{2\}, \{\})$ & 58 & 13 & Blowup $X_3(2)$ or $X_3(0,1)$ along a line \\ \hline

\end{tabularx}
\end{minipage}
\end{table}

\newpage

\section{Table of toric weak Fano fourfolds of $\rho = 3$}\label{app:3Picard4Fold}

In the following table we give a complete list of weak Fano toric fourfolds of Picard rank $3$ tabulating some numerical invariants. 
We use the notations 4.PB and 4.BC for the varieties that are projective bundles and the result of the Batyrev construction, respectively.


\begin{table}[h!]
\begin{minipage}{\textwidth}  
    \centering\small
    \begin{tabularx}{\textwidth}{|c|c|c|c|X|}
    \hline
    ID (Batyrev/Sato\footnote{For Fano varieties, this is the ID from Batyrev or Sato's list \cite{Bat:99, Sat:thesis}.}) & \texttt{projectiveBundleConstructor}\footnote{\texttt{projectiveBundleConstructor} can be found in the code repository associated to this paper \cite{weakFano-repo}.}  & $c_1^4$ & $c_1^2c_2$ & Blowdown and flop \\ \hline
    4.PB-1 ($D_{14}$) & $(2, \{0\}, \{\{0, -1\}, \{0, -1\}\})$ & 432 & 204 & Blowup $X_4(0)$ along a surface \\ \hline
    4.PB-2 ($D_{13}$) & $(2, \{0\}, \{\{0, 0\}, \{0, 0\}\})$ & 432 & 204 & $\PP^1 \times \PP^1 \times \PP^2$; \\ \hline
    4.PB-3 ($D_{17}$) & $(2, \{0\}, \{\{0, -1\}, \{1, -1\}\})$ & 405 & 198 & Blowup $X_4(0,0,1)$ along a surface \\ \hline
    4.PB-4 ($D_7$) & $(2, \{0\}, \{\{0, 0\},\{1, 1\}\})$ & 486 & 216 & Blowup $X_4(0,1,1)$ along a surface \\ \hline
    4.PB-5 ($D_{15}$) & $(2, \{1\}, \{\{0, 0\},\{0, 0\}\})$ & 432 & 204 & $\FF_1 \times \PP^2$; Blowup $X_4(0,0)$ along a surface \\ \hline
    4.PB-6 ($D_{11}$) & $(2, \{1\}, \{\{0, 0\},\{1, 0\}\})$ & 459 & 210 & Blowup $X_4(0,0,1)$ along a surface \\ \hline
    4.PB-7 ($D_5$) & $(3, \{0\}, \{\{0, -2\}\})$ & 496 & 220 &  \\ \hline
    4.PB-8 ($D_{12}$) & $(3, \{0\}, \{\{0, -1\}\})$ & 448 & 208 & Blowup $X_4(0)$ along a line \\ \hline
    4.PB-9 ($D_{18}$) & $(3, \{0\}, \{\{1, -2\}\})$ & 400 & 196 & Blowup $X_4(0,2)$ along a surface \\ \hline
    4.PB-10 ($D_{19}$) & $(3, \{0\}, \{\{1, -1\}\})$ & 400 & 196 & Blowup $X_4(0,1)$ along a surface, or $X_4(0,0,1)$ along a line \\ \hline
    4.PB-11 ($D_6$) & $(3, \{0\}, \{\{1, 1\}\})$ & 496 & 220 & Blowup $X_4(1,1)$ along a surface, or $X_4(1,1,1)$ (not weak Fano) along a line \\ \hline
    4.PB-12 ($D_1$) & $(3, \{0\}, \{\{1, 2\}\})$ & 592 & 244 & Blowup $X_4(2,2)$ (not weak Fano) along a surface \\ \hline
    4.PB-13 ($D_9$) & $(3, \{1\}, \{\{0, -1\}\})$ & 464 & 212 &  \\ \hline
    4.PB-14 ($D_{16}$) & $(3, \{1\}, \{\{1, -1\}\})$ & 432 & 204 & Blowup $X_4(1,1)$ along a surface \\ \hline
    4.PB-15 ($D_{8}$) & $(3, \{1\}, \{\{1, 0\}\})$ & 480 & 216 & Blowup $X_4(0,1)$ along a surface, or $X_4(1)$ along a line \\ \hline
    4.PB-16 ($D_3$) & $(3, \{1\}, \{\{1, 1\}\})$ & 560 & 236 & Blowup $X_4(1,2)$ along a surface \\ \hline
    4.PB-17 ($D_2$) & $(3, \{2\}, \{\{1, 0\}\})$ & 576 & 240 & Blowup $X_4(0,2)$ along a surface \\ \hline
    4.PB-18 ($D_{10}$) & $(3, \{0, 1\}, \{\{1, 0\}\})$ & 464 & 212 & Blowup $X_4(0,0,1)$ along a line, or $X_4(1)$ along a surface \\ \hline
    4.PB-19 ($D_4$) & $(3, \{0, 1\}, \{\{2, 0\}\})$ & 560 & 236 & Blowup $X_4(2)$ along a surface \\ \hline
    \hline
    4.PB-20 & $(2, \{0\},\{\{0, -2\}, \{0, -2\}\})$ & 432 & 204 &  \\ \hline
    4.PB-21 & $(2, \{0\},\{\{0, -1\}, \{0, 0\}\})$ & 432 & 204 & Self-flopped \\ \hline
    4.PB-22 & $(2, \{0\},\{\{0, -2\}, \{1, -2\}\})$ & 378 & 192 & Blowup $X_4(0,0,2)$ along a surface \\ \hline
    4.PB-23 & $(2, \{0\},\{\{0, -1\}, \{1, 0\}\})$ & 459 & 210 & Blowup $X_4(1,1,1)$ (not weak Fano) along a surface; Flop to 4.PB-39 \\ \hline
    4.PB-24 & $(2, \{0\},\{\{0, 0\}, \{1, -1\}\})$ & 378 & 192 & Blowup $X_4(0,1,1)$ along a surface; Self-flopped \\ \hline
    4.PB-25 & $(2, \{0\},\{\{0, 0\}, \{1, 2\}\})$ & 540 & 228 & Blowup $X_4(0,2,2)$ (not weak Fano) along a surface \\ \hline
    4.PB-26 & $(2, \{0\},\{\{0, -2\}, \{2, -2\}\})$ & 324 & 180 &  \\ \hline
    4.PB-27 & $(2, \{0\},\{\{0, -1\}, \{2, 0\}\})$ & 486 & 216 & Flop to 4.PB-44 \\ \hline
    4.PB-28 & $(2, \{0\},\{\{0, 0\}, \{2, -1\}\})$ & 324 & 180 & Self-flopped \\ \hline
    4.PB-29 & $(2, \{0\},\{\{0, 0\}, \{2, 2\}\})$ & 648 & 252 &  \\ \hline
    4.PB-30 & $(2, \{0\},\{\{1, -1\}, \{1, 0\}\})$ & 405 & 198 & Flop to 4.PB-38 \\ \hline
    \end{tabularx}
    \end{minipage}
\end{table}

\newpage

\begin{table}[h!]
\begin{minipage}{\textwidth}  
    \centering\small
    \begin{tabularx}{\textwidth}{|c|c|c|c|X|}
    \hline
    ID (Batyrev/Sato) & \texttt{projectiveBundleConstructor}  & $c_1^4$ & $c_1^2c_2$ & Blowdown and flop \\ \hline
    4.PB-31 & $(2, \{0\},\{\{1, 1\}, \{1, 1\}\})$ & 486 & 216 & Self-flopped \\ \hline
    4.PB-32 & $(2, \{1\},\{\{0, -1\}, \{0, -1\}\})$ & 432 & 204 & Flop to 4.BC-18 \\ \hline
    4.PB-33 & $(2, \{1\},\{\{0, -1\}, \{1, -1\}\})$ & 432 & 204 & Blowup $X_4(0,1,1)$ along a surface; Flop to 4.BC-19 \\ \hline
    4.PB-34 & $(2, \{1\},\{\{0, 0\}, \{1, 1\}\})$ & 513 & 222 & Blowup $X_4(0,1,2)$ (not weak Fano) along a surface; Flop to 4.BC-21 \\ \hline
    4.PB-35 & $(2, \{1\},\{\{0, -1\}, \{2, -1\}\})$ & 486 & 216 & Flop to 4.BC-20 \\ \hline
    4.PB-36 & $(2, \{1\},\{\{0, 0\}, \{2, 0\}\})$ & 540 & 228 & Blowup $X_4(0,2)$ along a surface \\ \hline
    4.PB-37 & $(2, \{1\},\{\{0, 0\}, \{2, 1\}\})$ & 648 & 252 & Flop to 4.BC-23 \\ \hline
    4.PB-38 & $(2, \{1\},\{\{1, -1\}, \{1, -1\}\})$ & 405 & 198 & Flop to 4.PB-30 or 4.BC-10 \\ \hline
    4.PB-39 & $(2, \{1\},\{\{1, 0\}, \{1, 0\}\})$ & 459 & 210 & Flop to 4.PB-23; Blowup $X_4(1,1)$ along a surface \\ \hline
    4.PB-40 & $(2, \{1\},\{\{1, 0\}, \{1, 1\}\})$ & 486 & 216 & Self-flopped; Flop to 4.BC-22 \\ \hline
    4.PB-41 & $(2, \{2\},\{\{0, 0\}, \{0, 0\}\})$ & 432 & 204 & $\FF_2 \times \PP^2$ \\ \hline
    4.PB-42 & $(2, \{2\},\{\{0, 0\}, \{1, 0\}\})$ & 486 & 216 & Blowup $X_4(0,0,2)$ along a surface \\ \hline
    4.PB-43 & $(2, \{2\},\{\{0, 0\}, \{2, 0\}\})$ & 648 & 252 &  \\ \hline
    4.PB-44 & $(2, \{2\},\{\{1, 0\}, \{1, 0\}\})$ & 486 & 216 & Flop to 4.PB-27 \\ \hline
    4.PB-45 & $(3, \{0\}, \{\{0, -3\}\})$ & 576 & 240 &  \\ \hline
    4.PB-46 & $(3, \{0\}, \{\{1, -3\}\})$ & 432 & 204 & Blowup $X_4(0,3)$ along a surface \\ \hline
    4.PB-47 & $(3, \{0\}, \{\{1, 3\}\})$ & 720 & 276 & Blowup $X_4(3,3)$ (not weak Fano) along a surface \\ \hline
    4.PB-48 & $(3, \{0\}, \{\{2, -3\}\})$ & 288 & 168 &  \\ \hline
    4.PB-49 & $(3, \{0\}, \{\{2, -2\}\})$ & 304 & 172 &  \\ \hline
    4.PB-50 & $(3, \{0\}, \{\{2, -1\}\})$ & 352 & 184 & Blowup $X_4(0,0,2)$ along a line \\ \hline
    4.PB-51 & $(3, \{0\}, \{\{2, 1\}\})$ & 544 & 232 & Blowup $X_4(2,2,2)$ (not weak Fano) along a line \\ \hline
    4.PB-52 & $(3, \{0\}, \{\{2, 2\}\})$ & 688 & 268 &  \\ \hline
    4.PB-53 & $(3, \{0\}, \{\{2, 3\}\})$ & 864 & 312 &  \\ \hline
    4.PB-54 & $(3, \{1\}, \{\{0, -2\}\})$ & 512 & 224 &  \\ \hline
    4.PB-55 & $(3, \{1\}, \{\{1, -2\}\})$ & 416 & 200 & Blowup $X_4(1,2)$ along a surface \\ \hline
    4.PB-56 & $(3, \{1\}, \{\{1, 2\}\})$ & 672 & 264 & Blowup $X_4(2,3)$ (not weak Fano) along a surface \\ \hline
    4.PB-57 & $(3, \{1\}, \{\{2, -2\}\})$ & 384 & 192 &  \\ \hline
    4.PB-58 & $(3, \{1\}, \{\{2, -1\}\})$ & 464 & 212 &  \\ \hline
    4.PB-59 & $(3, \{1\}, \{\{2, 0\}\})$ & 576 & 240 & Blowup $X_4(2)$ along a line \\ \hline
    4.PB-60 & $(3, \{1\}, \{\{2, 1\}\})$ & 720 & 276 &  \\ \hline
    4.PB-61 & $(3, \{1\}, \{\{2, 2\}\})$ & 896 & 320 &  \\ \hline
    4.PB-62 & $(3, \{2\}, \{\{1, -1\}\})$ & 512 & 224 & Blowup $X_4(1,2)$ along a surface \\ \hline
    4.PB-63 & $(3, \{2\}, \{\{1, 1\}\})$ & 672 & 264 & Blowup $X_4(1,3)$ (not weak Fano) along a surface \\ \hline
    4.PB-64 & $(3, \{2\}, \{\{2, -1\}\})$ & 672 & 264 &  \\ \hline
    4.PB-65 & $(3, \{2\}, \{\{2, 0\}\})$ & 816 & 300 &  \\ \hline
    4.PB-66 & $(3, \{2\}, \{\{2, 1\}\})$ & 992 & 344 &  \\ \hline
    4.PB-67 & $(3, \{3\}, \{\{1, 0\}\})$ & 720 & 276 & Blowup $X_4(0,3)$ along a surface \\ \hline
    4.PB-68 & $(3, \{3\}, \{\{2, 0\}\})$ & 1152 & 384 &  \\ \hline
    4.PB-69 & $(3, \{0, 1\}, \{\{1, -1\}\})$ & 416 & 200 & Blowup $X_4(0,1,1)$ along a line; Flop to 4.BC-13 \\ \hline
    4.PB-70 & $(3, \{0, 1\}, \{\{1, 1\}\})$ & 512 & 224 & Blowup $X_4(1,1,2)$ (not weak Fano) along a line; Flop to 4.BC-30 \\ \hline
    \end{tabularx}
    \end{minipage}
\end{table}

\newpage
\begin{table}[h!]
\begin{minipage}{\textwidth}  
    \centering\small
    \begin{tabularx}{\textwidth}{|c|c|c|c|X|}
    \hline
    ID (Batyrev/Sato) & \texttt{projectiveBundleConstructor} & $c_1^3$ & $c_1^2c_2$ & Blowdown and flop \\ \hline
    4.PB-71 & $(3, \{0, 1\}, \{\{2, -1\}\})$ & 464 & 212 & Flop to 4.BC-24 \\ \hline
    4.PB-72 & $(3, \{0, 1\}, \{\{2, 1\}\})$ & 656 & 260 & Flop to 4.BC-31 \\ \hline
    4.PB-73 & $(3, \{0, 1\}, \{\{3, -1\}\})$ & 576 & 240 & Flop to 4.BC-25 \\ \hline
    4.PB-74 & $(3, \{0, 1\}, \{\{3, 0\}\})$ & 720 & 276 & Blowup $X_4(3)$ along a surface \\ \hline
    4.PB-75 & $(3, \{0, 1\}, \{\{3, 1\}\})$ & 864 & 312 & Flop to 4.BC-32 \\ \hline
    4.PB-76 & $(3, \{0, 2\}, \{\{1, 0\}\})$ & 480 & 216 & Blowup $X_4(0,0,2)$ along a line \\ \hline
    4.PB-77 & $(3, \{0, 2\}, \{\{2, 0\}\})$ & 624 & 252 &  \\ \hline
    4.PB-78 & $(3, \{0, 2\}, \{\{3, 0\}\})$ & 864 & 312 &  \\ \hline
    4.PB-79 & $(3, \{1, 1\}, \{\{1, 0\}\})$ & 480 & 216 & Blowup $X_4(0,1,1)$ along a line; Self-flopped \\ \hline
    4.PB-80 & $(3, \{1, 1\}, \{\{2, 0\}\})$ & 624 & 252 & Self-flopped \\ \hline
    4.PB-81 & $(3, \{1, 1\}, \{\{3, 0\}\})$ & 864 & 312 & Self-flopped \\ \hline
    \hline
    ID (Batyrev/Sato) & \texttt{batyrevConstructor}\footnote{\texttt{batyrevConstructor} can be found in the code repository associated to this paper \cite{weakFano-repo}.} & $c_1^4$ & $c_1^2c_2$ & Blowdown and flop \\ \hline
    4.BC-1 ($G_5$) & $(\{1, 1, 2, 1, 2\},\{0\},\{0\})$ & 406 & 196 & Blowup $X_4(1,1)$ along a surface, or $X_4(0,0)$ along a line  \\ \hline
    4.BC-2 ($E_3$) & $(\{1, 1, 3, 1, 1\},\{0\},\{0, 0\})$ & 431 & 206 & Blowup $X_4(1)$ or $X_4(0)$ along a surface, or $X_4(0,0,1)$ at a point \\ \hline
    4.BC-3 ($G_3$) & $(\{1, 2, 1, 1, 2\},\{0\},\{\})$ & 433 & 202 & Blowup $X_4(1,1)$ along a line \\ \hline
    4.BC-4 ($G_1$) & $(\{1, 2, 1, 1, 2\},\{1\},\{\})$ & 529 & 226 &  \\ \hline
    4.BC-5 ($G_6$) & $(\{2, 1, 2, 1, 1\},\{0\},\{0\})$ & 401 & 194 & Blowup $X_4(0,1)$ along a line, or $X_4(0,0)$ along a surface \\ \hline
    4.BC-6 ($G_4$) & $(\{2, 1, 2, 1, 1\},\{0\},\{1\})$ & 417 & 198 & Blowup $X_4(1,1)$ along a line, or $X_4(0,1)$ along a surface \\ \hline
    4.BC-7 ($G_2$) & $(\{2, 1, 2, 1, 1\},\{1\},\{0\})$ & 450 & 204 & Blowup $X_4(0,2)$ along a line, or $X_4(0,1)$ along a surface \\ \hline
    4.BC-8 ($E_2$) & $(\{3, 1, 1, 1, 1\},\{1\},\{\})$ & 489 & 222 & Blowup $X_4(2)$ or $X_4(1)$ along a surface \\ \hline
    4.BC-9 ($E_1$) & $(\{3, 1, 1, 1, 1\},\{2\},\{\})$ & 605 & 254 & Blowup $X_4(3)$ or $X_4(2)$ along a surface \\ \hline
    \hline
    4.BC-10 & $(\{1, 1, 1, 2, 2\},\{0, 0\},\{\})$ & 405 & 198 & Flop to 4.PB-38; Blowup $X_4(0,0)$ at a point \\ \hline
    4.BC-11 & $(\{1, 1, 2, 1, 2\},\{0\},\{1\})$ & 433 & 202 & Self-flopped; Blowup $X_4(0,1)$ along a line \\ \hline
    4.BC-12 & $(\{1, 1, 2, 1, 2\},\{1\},\{0\})$& 433 & 202 & Blowup $X_4(0,1)$ along a line \\ \hline
    4.BC-13 & $(\{1, 1, 2, 2, 1\},\{0, 0\},\{0\})$ & 416 & 200 & Flop to 4.PB-69; Blowup $X_4(0,1,1)$ or $X_4(0)$ along a line \\ \hline
    4.BC-14 & $(\{1, 1, 3, 1, 1\},\{0\},\{0, 1\})$ & 431 & 206 & Self-flopped; Blowup $X_4(0,1,1)$ at a point, or $X_4(0,0,1)$ along a surface \\ \hline
    4.BC-15 & $(\{1, 1, 3, 1, 1\},\{1\},\{0, 0\})$ & 431 & 206 & Blowup $X_4(0,0,2)$ at a point, or $X_4(0,0,1)$ along a surface \\ \hline
    4.BC-16 & $(\{1, 2, 1, 1, 2\},\{2\},\{\})$ & 721 & 274 &  \\ \hline
    4.BC-17 & $(\{1, 2, 1, 2, 1\},\{0, 1\},\{\})$ & 487 & 214 & Blowup $X_4(1,2)$ along a line \\ \hline
    4.BC-18 & $(\{1, 2, 2, 1, 1\},\{0\},\{0\})$ & 432 & 204 & Blowup $X_4(1)$ along a line, or $X_4(0,1)$ at a point; Flop to 4.PB-32 \\ \hline
    4.BC-19 & $(\{1, 2, 2, 1, 1\},\{0\},\{1\})$ & 432 & 204 & Blowup $X_4(1,1)$ at a point; Flop to 4.PB-33 \\ \hline
    4.BC-20 & $(\{1, 2, 2, 1, 1\},\{0\},\{2\})$ & 486 & 216 & Blowup $X_4(1,2)$ at a point; Flop to 4.PB-35 \\ \hline
    \end{tabularx}
    \end{minipage}
\end{table}

\begin{table}[h!]
    \begin{minipage}{\textwidth}  
    \centering\small
    \begin{tabularx}{\textwidth}{|c|c|c|c|X|}
    \hline
    ID (Batyrev/Sato) & \texttt{batyrevConstructor} & $c_1^4$ & $c_1^2c_2$ & Blowdown and flop \\ \hline
    4.BC-21 & $(\{1, 2, 2, 1, 1\},\{1\},\{0\})$ & 513 & 222 & Blowup $X_4(0,2)$ at a point; Flop to 4.PB-34 \\ \hline
    4.BC-22 & $(\{1, 2, 2, 1, 1\},\{1\},\{1\})$ & 486 & 216 & Blowup $X_4(1,2)$ at a point; Flop to 4.PB-40 \\ \hline
    4.BC-23 & $(\{1, 2, 2, 1, 1\},\{2\},\{0\})$ & 648 & 252 & Blowup $X_4(0,3)$ at a point; Flop to 4.PB-37 \\ \hline
    4.BC-24 & $(\{2, 1, 1, 1, 2\},\{1\},\{\})$ & 464 & 212 & Flop to 4.PB-71; Blowup $X_4(1)$ along a line \\ \hline
    4.BC-25 & $(\{2, 1, 1, 1, 2\},\{2\},\{\})$ & 576 & 240 & Flop to 4.PB-73; Blowup $X_4(2)$ along a line \\ \hline
    4.BC-26 & $(\{2, 1, 1, 2, 1\},\{0, 1\},\{\})$ & 449 & 206 & Blowup $X_4(1,2)$ along a surface, or $X_4(0,1)$ along a line \\ \hline
    4.BC-27 & $(\{2, 1, 2, 1, 1\},\{0\},\{2\})$ & 487 & 214 & Blowup $X_4(1,2)$ along a line, or $X_4(0,2)$ along a surface \\ \hline
    4.BC-28 & $(\{2, 1, 2, 1, 1\},\{1\},\{1\})$ & 439 & 202 & Blowup $X_4(1,2)$ along a line, or $X_4(1,1)$ along a surface \\ \hline
    4.BC-29 & $(\{2, 1, 2, 1, 1\},\{2\},\{0\})$ & 553 & 226 & Blowup $X_4(0,3)$ along a line, or $X_4(0,2)$ along a surface \\ \hline
    4.BC-30 & $(\{2, 2, 1, 1, 1\},\{1\},\{\})$ & 512 & 224 & Blowup $X_4(2)$ along a line; Flop to 4.PB-70 \\ \hline
    4.BC-31 & $(\{2, 2, 1, 1, 1\},\{2\},\{\})$ & 656 & 260 & Blowup $X_4(3)$ along a line; Flop to 4.PB-72 \\ \hline
    4.BC-32 & $(\{2, 2, 1, 1, 1\},\{3\},\{\})$ & 864 & 312 & Blowup $X_4(4)$ along a line; Flop to 4.PB-75 \\ \hline
    4.BC-33 & $(\{3, 1, 1, 1, 1\},\{3\},\{\})$ & 779 & 302 & Blowup $X_4(4)$ or $X_4(3)$ along a surface \\ 
    \hline    
    \end{tabularx}
    \end{minipage}
\end{table}

\newpage

\end{document}